\newtheorem{lem}{Lemma}[section]
\newtheorem{thrm}[lem]{Theorem}
\newtheorem{prop}[lem]{Proposition}
\newtheorem{cor}[lem]{Corollary}
\theoremstyle{definition}
\newtheorem{defn}[lem]{Definition}
\newtheorem{rem}[lem]{Remark}
\renewcommand{\Re}{\ensuremath{\operatorname{Re}}}
\renewcommand{\Im}{\ensuremath{\operatorname{Im}}}
\renewcommand{\epsilon}{\varepsilon}
\newcommand{\mc}{\mathcal}
\newcommand{\mb}{\mathbf}
\newcommand{\mr}{\mathrm}
\newcommand{\mf}{\mathfrak}
\newcommand{\mtt}{\mathtt}
\numberwithin{equation}{section}
\newcommand{\R}{\ensuremath{\mathbb{R}}}
\newcommand{\C}{\ensuremath{\mathbb{C}}}
\newcommand{\Z}{\ensuremath{\mathbb{Z}}}
\newcommand{\Schwartz}{\ensuremath{\mathcal S(\R)}}
\newcommand{\Test}{\ensuremath{C^\infty_c}}
\DeclareMathOperator{\supp}{supp}
\newcommand{\<}{\ensuremath{\langle}}
\renewcommand{\>}{\ensuremath{\rangle}}
\newcommand{\p}{\ensuremath{\partial}}
\newcommand{\eps}{\varepsilon}
\newcommand{\qtq}[1]{\quad\text{#1}\quad}
\newcommand{\LHS}[1]{{\rm{LHS}}\eqref{#1}}
\newcommand{\RHS}[1]{{\rm{RHS}}\eqref{#1}}
\newcommand{\I}{\mf I}
\newcommand{\op}{\mr{op}}
\DeclareMathOperator{\tr}{tr}
\newcommand{\sbrack}[1]{^{(#1)}}
\newcommand{\ta}{\mtt a}
\newcommand{\tb}{\mtt b}
\newcommand{\vk}{\varkappa}
\newcommand{\bB}{\mb B}
\newcommand{\bd}{\mb d}
\newcommand{\bG}{\mb G}
\newcommand{\bS}{\mb S}
\newcommand{\cB}{\mc B}
\newcommand{\zero}{z}
\newcommand{\Id}{\text{Id}}
\title{Large-data equicontinuity for the derivative NLS}
\author[B.~Harrop-Griffiths]{Benjamin Harrop-Griffiths}
\address{Benjamin Harrop-Griffiths\\
Department of Mathematics\\
University of California, Los Angeles, CA 90095, USA}
\email{harropgriffiths@math.ucla.edu}
\author[R.~Killip]{Rowan Killip}
\address{Rowan Killip\\
Department of Mathematics\\
University of California, Los Angeles, CA 90095, USA}
\email{killip@math.ucla.edu}
\author[M.~Vi\c san]{Monica Vi\c{s}an}
\address{Monica Vi\c{s}an\\
Department of Mathematics\\
University of California, Los Angeles, CA 90095, USA}
\email{visan@math.ucla.edu}
\begin{document}

\begin{abstract}
We consider the derivative NLS equation in one spatial dimension, which is known to be completely integrable.  We prove that the orbits of $L^2$ bounded and equicontinuous sets of initial data remain bounded and equicontinuous, not only under this flow, but under the entire hierarchy. This allows us to remove the small-data restriction from prior conservation laws and global well-posedness results. 
\end{abstract}

\maketitle

\section{Introduction}

The derivative nonlinear Schr\"odinger equation,
\begin{equation}\label{DNLS}\tag{DNLS}
i\tfrac d{dt}q + q'' + i\bigl(|q|^2q)' = 0,
\end{equation}
describes the time evolution of a complex-valued function $q$ on the line. (Here and below, primes denote spatial derivatives.)  It arises both as an effective equation in plasma physics \cite{IchikawaWatanabe, MOMT, mjolhus_1976} and as an example of a completely integrable PDE \cite{MR464963}.

As a conspicuous dispersive equation, the well-posedness question for \eqref{DNLS} has received considerable attention over the years.  As we shall discuss more fully below, a robust theory of local well-posedness has been known for some time, as has a small-data global theory.  However, it was only very recently that global well-posedness could be proved for large data (even of Schwartz class).  The central bottle-neck in the theory of this equation has been obtaining satisfactory a priori bounds for solutions.  This is startling --- as a completely integrable system, \eqref{DNLS} admits infinitely many conservation laws!

Preeminent among the conserved quantities for \eqref{DNLS} is 
\begin{align}\label{M_of_q}
M(q) &:= \int |q(x)|^2\,dx. 
\end{align}
This is manifestly coercive; moreover, it is invariant under the scaling
\begin{align}\label{e:scaling}
q(t,x) \mapsto \sqrt{h}\, q(h^2 t,h x) \qtq{for} h>0
\end{align}
that preserves \eqref{DNLS}.  When we spoke of small-data well-posedness, we precisely meant under a restriction on the size of $M(q)$.  

While \eqref{DNLS} admits infinitely many other conserved quantities, such as
\begin{align}
H_1(q)&=-\tfrac{1}{2}\int i (q \bar q'-\bar q q')+|q|^4 \, dx, \\
H_2(q)&=\int |q'|^2 +\tfrac{3}{4} i |q|^2 (q \bar q'-\bar q q') + \tfrac 12 |q|^6\, dx,
\end{align}
it turns out that none are coercive.  We will further justify this claim below when we discuss the forward/inverse scattering technique.  For the moment, let us focus on $M$, $H_1$, and $H_2$.  The failure of their coercivity is best witnessed by a concrete example: the algebraic soliton.  This solution, which will be the central antagonist in our story, has initial data
\begin{align}\label{algae}
q_a(x) = \frac{2(1-ix)}{(1+ix)^2} e^{ix/2} \qtq{and evolution} q(t,x) = q_a(x-t)  e^{it/4} . 
\end{align}
Direct computation shows
$$
M(q_a)=4\pi \qtq{and} H_1(q_a)=H_2(q_a)=0;
$$
moreover, these values are inherited by all rescalings \eqref{e:scaling} of the algebraic soliton.  Thus, no combination of $M$, $H_1$, and $H_2$ can control the $H^1$ norm for all solutions to \eqref{DNLS}.

In fact, the algebraic soliton \eqref{algae} serves as a minimal counter-example to the coercivity of $M$, $H_1$, and $H_2$.  This was proved by Wu in \cite{MR3393674}, who showed that the simultaneous conservation of $M$, $H_1$, and $H_2$ does provide an a priori $H^1$ bound for all solutions with $M(q)< 4\pi$.  

Our goal in this paper is to prove that the flow map for \eqref{DNLS} preserves $L^2$-equicontinuity. This question was posed in \cite{KNV}, where it was shown to have robust consequences both for a priori bounds and for the well-posedness problem. To formulate matters precisely, we need one preliminary definition:

\begin{defn}
A bounded set $Q\subseteq L^2(\R)$ is said to be \emph{$L^2$-equicontinuous} if 
\begin{align*}
\limsup_{y\to 0} \ \sup_{q\in Q}\ \int |q(x-y) - q(x)|^2\,dx = 0.
\end{align*}
\end{defn}

An equivalent formulation of equicontinuity as tightness of the Fourier transforms will be useful later and is presented in \eqref{HF-UNI} below. 

As we do not currently know whether $L^2$ initial data leads to (even local) solutions, we formulate the preservation of equicontinuity for a narrower (yet dense) class of initial data.  Specifically, we will consider Schwartz-class initial data.  As we will discuss below, such initial data leads to global Schwartz solutions.

\begin{thrm}\label{t:DNLS equi}
Given an \(L^2\)-bounded and equicontinuous set \(Q\subseteq \Schwartz\) of Schwartz-class initial data for \eqref{DNLS}, the attendant collection of orbits
$$
\bigl\{q(t) : q(0) \in Q\text{ and }t\in \R\bigr\}
$$
is also \(L^2\)-bounded and equicontinuous.
\end{thrm}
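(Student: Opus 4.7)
The plan follows the Killip--Visan philosophy: construct a one-parameter family of conserved quantities \(\alpha(\kappa;q)\), defined for \(\kappa\) ranging in some unbounded set, which on one hand is preserved under every flow in the DNLS hierarchy and on the other hand quantitatively detects \(L^2\)-mass of \(\hat q\) at frequencies \(|\xi|\gtrsim\kappa\). Granted such a family, conservation propagates uniform Fourier tightness from \(Q\) to the whole orbit, which is exactly \(L^2\)-equicontinuity. Boundedness is immediate from conservation of \(M\).

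I would define \(\alpha(\kappa;q)=\Re\log a(\kappa;q)\), with \(a\) the transmission coefficient for the Kaup--Newell Lax operator, equivalently a suitably regularized Fredholm determinant of \(\cL(q)-\kappa\). The first checks are analyticity and conservation: the former by a Born-series argument valid once \(\kappa\) is kept away from the discrete spectrum of \(\cL(q)\), the latter from the zero-curvature structure of the hierarchy, which forces time-independence of \(a(\kappa;q(t))\) under each Hamiltonian flow. Since the hypotheses give globally-defined Schwartz solutions, the Jost-solution manipulations involved are classical.

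For the quantitative step, expanding \(\alpha\) perturbatively about the free problem should yield at quadratic order a Fourier multiplier comparable, for \(q\) in an \(L^2\)-ball, to \(\int_{|\xi|\ge\kappa}|\hat q(\xi)|^2\,d\xi\), with the higher-order remainders possessing a paraproduct structure: each multilinear term contains at most one genuinely high-frequency factor, balanced by low-frequency factors whose \(L^2\)-mass is bounded by powers of \(M(q)\). The upshot is an inequality of the schematic form
\begin{equation*}
c\int_{|\xi|\ge\kappa}|\hat q(\xi)|^2\,d\xi \;\le\; \alpha(\kappa;q) + C\bigl(M(q)\bigr)\!\!\int_{|\xi|\ge\kappa/2}|\hat q(\xi)|^2\,d\xi,
\end{equation*}
which, iterated against the equicontinuity of \(Q\) and combined with conservation \(\alpha(\kappa;q(t))=\alpha(\kappa;q(0))\), closes to give uniform tightness of \(\{\hat{q(t)}\}\).

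The main obstacle is the algebraic soliton \eqref{algae}: at the threshold \(M(q)=4\pi\) the transmission coefficient \(a(\kappa;q)\) develops a zero (or resonance) on the real axis, so the naive perturbation determinant is neither positive nor holomorphic uniformly on Schwartz initial data. The principal technical burden is therefore to construct an \(\alpha\) that bypasses this threshold---perhaps by restricting \(\kappa\) to a region of \(\C\) on which \(a\) is zero-free for all data with \(M(q)\) in any prescribed bounded range, or by extracting the algebraic-soliton contribution via an explicit renormalization or gauge transformation---while still retaining the paraproduct lower bound with constants depending only on \(M(q)\). Decoupling the bound-state part of the spectral data from the scattering part, uniformly in \(Q\), is where the novelty of the large-data argument must live.
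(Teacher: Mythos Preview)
Your high-level framework is right: conservation of the transmission coefficient \(a(k;q)\) under the DNLS flow is the mechanism, and \(L^2\)-boundedness is immediate from conservation of \(M\). But the specific functional \(\alpha=\Re\log a\) does not do what you claim. Along the relevant ray \(k=i\kappa\), the quadratic term in \(\log a(i\kappa;q)\) has real part \(\int\frac{\kappa\xi}{\xi^2+4\kappa^2}|\hat q(\xi)|^2\,d\xi\), which is odd in \(\xi\) and hence not sign-definite; no lower bound of the form you write can come from it. The paper instead works with the full complex value of \(a(i\kappa;q)\) and effectively with its \emph{argument}: the key computation is that \(a(i\kappa;q)\approx e^{-\frac{i}{2}\|q_{\leq N}\|_{L^2}^2}\) whenever \(\kappa\) sits in a frequency gap above \(N\), so that \(M(q)+2\arg a(i\kappa;q)\) measures the high-frequency mass \(\|q_{>N}\|_{L^2}^2\).

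More seriously, the obstruction you correctly flag is not resolved. Your schematic inequality cannot hold with constants depending only on \(M(q)\): rescalings of the algebraic soliton (approximated in Schwartz) have \(a\equiv1\), hence \(\alpha\equiv0\), yet carry fixed mass at arbitrarily high frequency. Your proposed fix of restricting to a region where \(a\) is zero-free ``for all data with \(M(q)\) in a bounded range'' is impossible for the same reason; there is no such universal region. The paper's resolution contains three ingredients you are missing: (i) a factorization estimate \(a(i\kappa;q)\approx a(i\kappa;q_{\leq N})\,a(i\kappa;q_{>N})\) valid when \(q\) has a frequency gap near \(N\), found by pigeonholing; (ii) the observation that equicontinuity of the \emph{initial} set \(Q\) confines the zeros of \(a(\,\cdot\,;\tilde q)\), \(\tilde q\in Q\), to a bounded subset of a sector---and since \(a\) is conserved, the same zeros govern every \(q\) in the orbit; (iii) an induction on the number of zeros in the sector, the inductive step being a B\"acklund transformation that removes one zero. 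Step~(ii) is what makes (iii) preserve equicontinuity (the estimates for the B\"acklund transform depend on \(|z|\)), and it is precisely where the Schwartz hypothesis and the equicontinuity of \(Q\) enter. Your ``renormalization or gauge transformation'' gestures toward (iii), but without (i) and (ii) the argument does not close.
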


The definition of equicontinuity can easily be adapted to any Banach space of functions on $\R$.  The facts that $L^2$ is scaling-critical for \eqref{DNLS} and that it coincides with the conserved quantity $M$ makes this the most natural (and most ambitious) space to choose here.

Theorem~\ref{t:DNLS equi} is phrased in terms of Schwartz-class solutions, not only because we know that such solutions exist, but also because they provide an effective way to address data with less regularity or decay.

The question addressed in Theorem~\ref{t:DNLS equi} was pinpointed as important for the theory of \eqref{DNLS} in \cite{KNV}; see \cite[Conjecture~1.1]{KNV} as well as the more general \cite[Conjecture~1.2]{KNV} that we will address in Theorem~\ref{t:Main} below.  Theorem~1.3 of~\cite{KNV} settled these questions in the case of sets $Q$ with $\sup\{M(q):q\in Q\} <4\pi$.  Note that $M=4\pi$ is precisely the threshold delineated by the algebraic soliton that we discussed earlier.  Our purpose in this paper is to remove such small data restrictions.

The second major theme of \cite{KNV} was demonstrating important consequences of $L^2$-equicontinuity for \eqref{DNLS}.  In formulating such results, \cite{KNV} introduced a threshold $M_*$ defined by the property that equicontinuity is preserved for sets with $\sup M(q) < M_*$; see \cite[Definition~1.4]{KNV} for the precise formulation.  By proving that $M_*=\infty$ in this paper, we immediately obtain a number of corollaries from \cite{KNV}.  For example, the following a priori bounds for (arbitrarily large) solutions:

\begin{cor}\label{C:s bounds}
Fix $0<s<\frac12$ or $s=1$.  There is a function $C_s:[0,\infty)\to[0,\infty)$ so that
\begin{align}\label{E:s bounds}
\sup_t \| q(t) \|_{H^s(\R)} \leq C_s\bigl( \| q(0) \|_{H^s(\R)} \bigr)
\end{align}
for every solution $q(t)$ to \eqref{DNLS} with initial data $q(0)\in \Schwartz$.
\end{cor}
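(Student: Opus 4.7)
The plan is to combine Theorem~\ref{t:DNLS equi} with conditional estimates already established in \cite{KNV}. As the introduction recalls, \cite{KNV} introduced a threshold $M_*\in(0,\infty]$ with the property that equicontinuity is preserved under \eqref{DNLS} on sets satisfying $\sup_{q\in Q}M(q)<M_*$, and then proved estimates of exactly the form \eqref{E:s bounds} for initial data with $M(q(0))<M_*$. Because Theorem~\ref{t:DNLS equi} asserts that $M_*=\infty$, the conditional bounds from \cite{KNV} should immediately upgrade to the claimed unconditional bounds. My task here is therefore bookkeeping rather than genuinely new analysis.

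To set up the reduction, I would proceed as follows. Given $q(0)\in\Schwartz$ with $\|q(0)\|_{H^s}\leq R$, form the $H^s$-ball $Q_R=\{f\in\Schwartz:\|f\|_{H^s}\leq R\}$. This set is trivially $L^2$-bounded, and for any $s>0$ it is $L^2$-equicontinuous because $\int_{|\xi|>N}|\hat f(\xi)|^2\,d\xi\leq N^{-2s}R^2\to 0$ uniformly in $Q_R$. Applying Theorem~\ref{t:DNLS equi} then shows that the full orbit collection $\{q(t):q(0)\in Q_R,\,t\in\R\}$ is likewise $L^2$-bounded and equicontinuous.

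The remaining step is to upgrade this uniform equicontinuity to a quantitative $H^s$ bound. For $s=1$, the natural tool is conservation of $H_2$: one writes $\|q'\|_{L^2}^2$ as $2H_2(q(0))$ minus the sign-indefinite cubic-derivative and sextic terms, then applies frequency-localized Gagliardo--Nirenberg-type estimates that exploit equicontinuity to make the high-frequency portion of those indefinite terms arbitrarily small, thereby absorbing them into the left-hand side. For $0<s<\tfrac12$, the argument instead uses the perturbation-determinant / diagonal Green's function machinery developed in \cite{KNV} to extract a microscopic conservation law at regularity $s$ whose defect is controlled by $L^2$-equicontinuity.

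Both of these steps are carried out in \cite{KNV} under the conditional assumption $M(q(0))<M_*$; what was missing there, and supplied here by Theorem~\ref{t:DNLS equi}, is precisely the hypothesis that equicontinuity persists without any restriction on $M$. The main obstacle, accordingly, lies entirely in Theorem~\ref{t:DNLS equi} itself; the present corollary should follow by quoting \cite{KNV} with $M_*$ now replaced by $\infty$.
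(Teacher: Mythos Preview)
Your proposal is correct and matches the paper's own reasoning: the paper does not give a standalone proof but explains that Corollary~\ref{C:s bounds} follows immediately from \cite{KNV} once Theorem~\ref{t:DNLS equi} establishes $M_*=\infty$, citing \cite[Theorem~4.3]{KNV} for $0<s<\tfrac12$ (perturbation determinant) and \cite[Proposition~4.1]{KNV} for $s=1$ (equicontinuity combined with $M$ and $H_2$). Your more explicit setup via the $H^s$-ball $Q_R$ and its manifest equicontinuity is a clean way to package the reduction, but the substance is the same.
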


The possibility of choosing $0<s<\frac12$ or $s=1$ in Corollary~\ref{C:s bounds} reflects the fact that we have grouped together two separate arguments from \cite{KNV}.  For small values of $s$ this result is \cite[Theorem~4.3]{KNV}, which employs conservation of the perturbation determinant; for $s=1$ this recalls \cite[Proposition~4.1]{KNV}, which shows how one may combine equicontinuity with the conserved quantities $M$ and $H_2$ to obtain a priori bounds.

Prior to \cite{KNV}, Klaus and Schippa \cite{klaus2020priori} proved $H^s$-bounds for $0<s<\frac12$ under a (non-quantitative) small-$M$ restriction.  Microscopic conservation laws were derived under the same restriction in \cite{tang2020microscopic}; by the results of this paper, these laws extend to all values of $M$.

As mentioned earlier, the theory of \eqref{DNLS} has been greatly hindered by the lack of large-data a priori bounds in $H^s(\R)$ spaces.  This impasse was only very recently dislodged by Bahouri and Perelman \cite{BP}.  In a major breakthrough, they proved \eqref{E:s bounds} with $s=\frac12$.  Their paper has served as an important source of inspiration for us. 

By employing Corollary~\ref{C:s bounds} as the base step of an inductive scheme, the recent paper \cite{BLP} proved a priori bounds in $H^s(\R)$ spaces for all $s\geq \frac12$.

It follows from Corollary~\ref{C:s bounds} that a satisfactory local well-posedness theory (as has long been known in $H^1(\R)$, for example) is immediately global.  Indeed, local well-posedness guarantees that irregular solutions may be approximated (locally in time) by Schwartz solutions and consequently inherit their bounds.  These bounds allow the local-in-time construction of solutions to be iterated indefinitely.

Prior to \cite{KNV}, equicontinuity was shown to play an important role in proving optimal well-posedness for several other completely integrable dispersive equations, such as KdV, NLS, and mKdV; see \cite{bringmann2019global,harropgriffiths2020sharp,MR3990604}.  For these arguments, however, one needs to know that equicontinuity is preserved not merely for the evolution in question, but for the whole hierarchy.  Just such a conjecture was formulated for \eqref{DNLS} in \cite{KNV} and will be settled in this paper; see Theorem~\ref{t:Main}.  In this way, \cite[Theorem~1.5]{KNV} yields the following:

\begin{cor}\label{C:s GWP}
For $\frac16\leq s<\frac12$, the evolution \eqref{DNLS} is globally well-posed in $H^s(\R)$.
\end{cor}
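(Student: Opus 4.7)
The plan is to reduce the corollary to a direct invocation of \cite[Theorem~1.5]{KNV}. That result is a conditional global well-posedness theorem for \eqref{DNLS} in $H^s(\R)$ with $\tfrac16\leq s<\tfrac12$, contingent on the preservation of $L^2$-boundedness and equicontinuity under every flow in the DNLS hierarchy; equivalently, on the assertion $M_*=\infty$ in the notation of \cite{KNV}. The first step is to verify this hypothesis unconditionally, which is exactly the content of Theorem~\ref{t:Main} (the hierarchy version of Theorem~\ref{t:DNLS equi}) in the present paper. Once that is in hand, the small-data restriction in the conclusion of \cite[Theorem~1.5]{KNV} evaporates and the corollary is immediate.

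For orientation I would also sketch the role hierarchy equicontinuity plays inside the proof of \cite[Theorem~1.5]{KNV}. Given $H^s$ initial data, one approximates it by Schwartz data; the corresponding Schwartz solutions exist globally, and by Corollary~\ref{C:s bounds} together with the preserved equicontinuity they form a uniformly $H^s$-bounded and $L^2$-equicontinuous family on any fixed time interval. Continuity of the flow map in $H^s$ is then obtained by a commuting-flows argument: one regularizes each approximate solution by flowing briefly along a suitable higher-order member of the hierarchy, compares the regularized evolutions using the improved smoothness of the nonlinearity at high regularity, and removes the regularization in the limit, with the uniform hierarchy equicontinuity controlling the error as the regularization parameter is sent to zero. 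The threshold $s\geq \tfrac16$ records the regularity at which this comparison closes.

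The main obstacle to this program does not lie in the corollary itself but in its input, Theorem~\ref{t:Main}: the $M<4\pi$ barrier imposed by the algebraic soliton \eqref{algae} must be crossed not merely for \eqref{DNLS} but for the entire hierarchy. Once that is accomplished, the conditional well-posedness machinery of \cite[Theorem~1.5]{KNV} applies verbatim, and the proof of Corollary~\ref{C:s GWP} is essentially a one-line citation. Accordingly, I would present the argument in a single short paragraph, deferring all the substance to Theorem~\ref{t:Main} and the companion paper \cite{KNV}.
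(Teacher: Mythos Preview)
Your proposal is correct and matches the paper's approach exactly: the paper does not give a standalone proof of this corollary but derives it in the introduction as an immediate consequence of Theorem~\ref{t:Main} (which establishes $M_*=\infty$) together with \cite[Theorem~1.5]{KNV}. Your additional sketch of the internal mechanism of \cite[Theorem~1.5]{KNV} is accurate supplementary context, though the paper itself simply cites that result without elaboration.
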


The notion of well-posedness meant here is this: For any sequence of Schwartz initial data $q_n(0)$ that is convergent in $H^s(\R)$ the corresponding solutions $q_n(t)$ converge in $C_t([-T,T];H^s(\R))$ for every finite $T>0$.  As $s\geq \frac16$, convergence also holds in $L^3([-T,T]\times\R)$; thus, the limiting trajectory is a distributional solution to \eqref{DNLS}.  While these solutions clearly depend continuously on the initial data, it was shown in \cite{MR1837253,MR1693278} that the data-to-solution map cannot be \emph{uniformly} continuous on bounded subsets of $H^s(\R)$ when $s<\frac12$.

The previous state of the art for large data local well-posedness in $H^{s}(\R)$ spaces was $s\geq \frac12$.   Local well-posedness has also been studied in Fourier--Lebesgue spaces, \cite{Deng2, MR2181058, MR2390318}; such spaces are better suited for studying invariant measures and were used for this purpose in \cite{MR2928851}.

Many years before Wu's $4\pi$ result in \cite{MR3393674}, Hayashi and Ozawa \cite{MR1152001} proved that $M$ and $H_2$ control the $H^1$ norm of solutions when $M(q)<2\pi$.  Using this ingredient, they proved that \eqref{DNLS} is globally well-posed in $H^1(\R)$ under this restriction on $M$.  They also proved that such solutions with Schwartz-space initial data remain Schwartz.  

A series of subsequent works, \cite{MR1871414, MR1950826,MR2823664,MR1836810}, culminated in a proof that \eqref{DNLS} is globally well-posed in $H^s(\R)$ for $s\geq\frac12$, in the $M<2\pi$ regime.  Following the discovery of the $4\pi$ threshold in \cite{MR3393674}, the case  $s\geq\frac12$ and $M<4\pi$ was treated in \cite{MR3583477}.

Combining the large-data bounds proved recently by Bahouri and Perelman \cite{BP} with the local theory discussed above shows that \eqref{DNLS} is globally well-posed in $H^s(\R)$ for $s\geq\frac12$.  In this way, \cite{BP} yields the first large-data global well-posedness result in Sobolev spaces.  Corollary~\ref{C:s GWP} improves upon this yielding global well-posedness for $s\geq\frac16$.  This brings us closer to the critical scaling; moreover, even local well-posedness for large initial data was previously unknown for any $s<\frac12$. 

While ill-suited to initial data in $H^s$ spaces (due to the poor physical decay), the inverse scattering technique is extremely powerful, yielding not only well-posedness of integrable equations, but also detailed information on the long-time behavior of solutions.  This approach to \eqref{DNLS} has advanced considerably in recent years through the efforts of many people:  \cite{MR3706093,MR3563476, MR3739932, MR4149070,MR3913998,MR3858827, MR4042219,MR3862117,MR3702542, saalmann2017global}.  Given the formulation of our Theorems~\ref{t:DNLS equi} and~\ref{t:Main}, it is important here to single out the contribution of \cite{MR4149070} which established that \eqref{DNLS} is globally well-posed  in
$H^{2,2}(\R)= \{f\in H^2(\R):\, x^2f\in L^2(\R)\}$.  Combined with the arguments in \cite{MR1152001}, this result shows that Schwartz-class initial data lead to global Schwartz-space solutions.

While it is also interesting to study \eqref{DNLS} when posed on the circle, we are currently unable to prove an analogue of our main theorems in that setting.   The results presented in \cite{KNV} cover both geometries and so a proof of equicontinuity on the circle would have consequences for the well-posedness problem paralleling those described above. 

The integrable nature of \eqref{DNLS} will play a major role in our analysis.  This was first articulated by Kaup and Newell in \cite{MR464963}.  In particular, they introduced operator pencils
\begin{equation}\label{L&P}\begin{aligned}
L(\lambda;q) &= \begin{bmatrix}
   -i \lambda^2 -\partial_x & \lambda q\\
   -\lambda \bar q & i \lambda^2 -\partial_x
    \end{bmatrix}, \\
P(\lambda;q) &= \begin{bmatrix}-2i\lambda^4+i\lambda^2|q|^2 &
				 2\lambda^3q      - \lambda|q|^2q      + i\lambda q' \\[1mm]
				-2\lambda^3\bar q + \lambda|q|^2\bar q + i\lambda \bar q' &
				     2i\lambda^4-i\lambda^2|q|^2  \end{bmatrix},
\end{aligned}
\end{equation}
with spectral parameter $\lambda\in\C$, and proved that
\begin{align}
q(t,x)\text{ solves \eqref{DNLS}} \ \iff\  \partial_t L(\lambda;q(t)) = [P(\lambda;q(t)),L(\lambda;q(t))].  \label{L P rep}
\end{align}

For the particular case $q\equiv 0$, we will write
\begin{align}\label{L0 defn}
L_0(\lambda) &:= \begin{bmatrix}
   -i \lambda^2 -\partial_x & 0\\
   0 & i \lambda^2 -\partial_x
    \end{bmatrix} = -\partial_x - i \lambda^2 \sigma_3 \qtq{where} \sigma_3 = \begin{bmatrix}1&0\\0&-1\end{bmatrix}.
\end{align}

A central object of study in the presence of such a Lax representation \eqref{L P rep} is the scattering matrix.  This connects the asymptotic behavior at the two spatial infinities of solutions to the ODE $L\vec\psi =0$.

Given $\lambda^2\in\R$ and $q\in\Schwartz$, it is known that there is a unique matrix $\Psi$ with 
\begin{align}\label{Psi minus}
L \Psi = 0 \qtq{and} \lim_{x\to-\infty}  \Psi(x) e^{i\lambda^2x\sigma_3} = \Id.
\end{align}
Moreover, the limit
\begin{align}\label{S defn}
S(\lambda) :=\lim_{x\to+\infty} e^{i\lambda^2x\sigma_3} \Psi(x)
\end{align}
exists and satisfies 
\begin{align}\label{S structure}
S(\lambda) = \begin{bmatrix}\;\ta(\lambda)&-\overline{\tb(\bar\lambda)}\;\\[1mm] \;\tb(\lambda)&\overline{\ta(\bar\lambda)}\end{bmatrix}
	\qtq{with}
|\ta(\lambda)|^2 = \begin{cases}1 - |\tb(\lambda)|^2&\text{ if }\lambda\in \R,\\1 + |\tb(\lambda)|^2&\text{ if }\lambda\in i\R.\end{cases}
\end{align}
We will review some of this material in Section~\ref{S:3}.

The key virtue of $\ta$ and $\tb$ is that they evolve in a simple manner as $q(t)$ flows according to \eqref{DNLS}.  Specifically,
\begin{equation}\label{a doesn't}
\ta(\lambda;q(t)) = \ta(\lambda;q(0))  \qtq{and} \tb(\lambda;q(t)) = e^{-4i\lambda^4t} \tb(\lambda;q(0)).
\end{equation}

This analysis leads to the idea that $\ta(\lambda;q)$ encodes all the conserved quantities for the flow; in particular, \eqref{S structure} shows that it captures the modulus of $\tb(\lambda;q)$.  For many integrable systems, this has been shown to be the case.  However, if we consider the algebraic soliton $q_a$ defined in \eqref{algae}, we find that
\begin{equation}\label{Psi for alg}
\Psi(x;\lambda) = 
	e^{-i\lambda^2x\sigma_3} + \tfrac{4\lambda}{4\lambda^2+1} \begin{bmatrix} \frac{2i\lambda}{x-i} & \tfrac{e^{ix/2}}{x-i} \\
			-\tfrac{e^{-ix/2}}{x+i} & -\tfrac{2i\lambda}{x+i} \end{bmatrix}
	e^{-i\lambda^2x\sigma_3}
\end{equation}
is the solution to \eqref{Psi minus} for all $\lambda\in\C\setminus\{\pm \tfrac i2\}$.  This shows that $\ta(\lambda;q_a)\equiv 1$ and $\tb(\lambda;q_a)\equiv 0$ for all $\lambda^2\in\R\setminus\{-\frac14\}$.

We must ask about the significance of the exceptional values of $\lambda$.  There is a very compelling argument that they are meaningless.  As we will show more fully below, $\ta(\lambda;q)$ extends to a holomorphic function in the first quadrant and belongs to Nevanlinna class; as such, the function is determined by its a.e. boundary values.  Thus $\ta(\lambda;q_a)\equiv 1$ as a holomorphic function in the first quadrant.

One valuable approach to analyzing $\ta(\lambda;q)$, including showing that it is holomorphic, is to use that it is given by a certain Fredholm determinant:
\begin{align}\label{E:JP I}
\ta(\lambda ;q) = \det\bigl[ L_0^{-1} L \bigr]= \det\bigl[1 - \lambda^2  (-i\lambda^2 - \p)^{-1}q (-i \lambda^2 + \p)^{-1} \bar q \bigr],
\end{align}
valid for all $\lambda$ in the (open) first quadrant and $q\in\Schwartz$.  We will refer to this equality as a Jost--Pais identity, honoring \cite{MR0044404}, and will prove it in Section~\ref{S:3}.

If we fully accept our first answer regarding the exceptional values of $\lambda$, then there can be no hope of proving Theorem~\ref{t:DNLS equi} through $\ta(\lambda;q)$ alone: it cannot distinguish $q_a$ from zero, nor from any rescaling of $q_a$ via \eqref{e:scaling}.  As $\ta(\lambda;q)$ serves as a generating function for the polynomial conserved quantities, these functionals are also incapable of differentiating.

One of the morals that we have gleaned from the work of Bahouri--Perelman \cite{BP} is that $\arg[\ta(\lambda;q)]$, when properly interpreted, holds crucial information.  This is difficult to explain directly for $q_a$.  So imagine instead an approximating sequence of Schwartz functions; specifically, a sequence of traditional (bright) solitons, which are smooth with exponential decay.  For such solitons, the function $\ta(\lambda;q)$ is given by a single Blaschke factor whose zero is inside the open first quadrant.  Evidently, this zero forms a branch point for $\arg[\ta(\lambda;q)]$.  As the sequence approaches the algebraic soliton, this point approaches the point $i/2$ on the imaginary axis.  In this way, we find it instructive to view  the exceptional point as the vestige of a branch-point for $\arg[ \ta(\lambda;q)]$.

It is also true that the Lax operator $L(\lambda;q_a)$ has an eigenvalue at the point $\lambda=i/2$; indeed, the eigenfunction coincides with the residue of the function \eqref{Psi for alg} at this point.   The paper \cite{MR2202901} demonstrates that this eigenvalue exhibits a striking stability: under a broad class of perturbations (of either sign), the eigenvalue moves into the first quadrant; it does not dissolve into the continuous spectrum.  This further reinforces our interpretation of the exceptional point as the relic of zeros of $\ta(\lambda;q)$, or equivalently branch points of $\arg[\ta(\lambda;q)]$, that have moved to the boundary.

Once one believes that $\ta(\lambda;q)$ does encode enough information to prove equicontinuity, then one must also believe that an analogue of Theorem~\ref{t:DNLS equi} holds for the whole DNLS hierarchy and indeed, for any flow conserving $\ta(\lambda;q)$.  This is in fact our principal result, of which Theorem~\ref{t:DNLS equi} is an elementary corollary:

\begin{thrm}\label{t:Main}
Let $Q\subseteq \Schwartz$ be \(L^2\)-bounded and equicontinuous.  Then
\begin{align}\label{QsI}
Q_* = \bigl\{q\in \Schwartz: \ta(\lambda;q)\equiv \ta(\lambda;\tilde q) \text{ for some $\tilde q\in Q$}\bigr\}
\end{align}
is also \(L^2\)-bounded and equicontinuous.
\end{thrm}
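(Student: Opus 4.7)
The $L^2$-boundedness half of the theorem is the easy one: expanding the Jost--Pais Fredholm determinant \eqref{E:JP I} in inverse powers of $\lambda^2$ for $|\lambda|$ large in the first quadrant produces the usual asymptotic series whose coefficients recover $M(q)$ (and the higher polynomial conservation laws). Hence $M$ is a functional of $\ta(\cdot;q)$ alone, and $M(q)=M(\tilde q)$ whenever $q\in Q_*$ matches some $\tilde q\in Q$. The substantive claim is the equicontinuity of $Q_*$, which by the Fourier-side characterization amounts to
\[
\lim_{R\to\infty}\ \sup_{q\in Q_*}\int_{|\xi|>R}|\hat q(\xi)|^2\,d\xi \;=\; 0.
\]

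My plan is to construct, for each $\kappa>0$, a nonnegative functional $\alpha(\kappa;q)$ with two key properties: (i) it depends only on $\ta(\cdot;q)$, and (ii) it dominates $\int_{|\xi|>\kappa^2}|\hat q(\xi)|^2\,d\xi$ modulo controlled error terms. Because $\alpha$ is then $\ta$-measurable, uniform smallness of $\alpha(\kappa;\cdot)$ on $Q$ (which follows from the equicontinuity of $Q$) transfers verbatim to $Q_*$, whereupon (ii) delivers equicontinuity of $Q_*$. To build such an $\alpha$, I would start from \eqref{E:JP I} at a spectral parameter $\lambda=\kappa e^{i\pi/4}$, so that $\lambda^2=i\kappa^2$ sits off the physical spectrum, and perform a Born expansion of $-\log\ta(\lambda;q)$. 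A direct Plancherel computation identifies the quadratic-in-$q$ term as a weighted integral of $|\hat q|^2$ with weight concentrating on frequencies $|\xi|\gtrsim\kappa^2$ as $\kappa\to\infty$; averaging this identity against a Poisson-type kernel in $\kappa$ (and combining with a Blaschke correction described below) produces the desired quadratic coercivity.

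The principal obstacle is coercivity without a small-mass hypothesis, where the algebraic soliton \eqref{algae} is the archetypal bad example: the Born series need not converge and $\ta$ may acquire zeros inside the first quadrant (conventional solitons) or on its boundary (the algebraic-soliton limit at $\lambda=i/2$). Following the Bahouri--Perelman moral that $\arg\ta$ holds crucial information, I would invoke the Nevanlinna factorization $\ta = B\cdot F$ in the first quadrant, where $B$ is the Blaschke product over the zeros of $\ta$ (including boundary accumulation points) and $F$ is the outer factor encoding $\log|\ta|$ on the boundary; both factors are preserved by any flow conserving $\ta$. The functional $\alpha(\kappa;q)$ should combine Poisson averages of $\log|\ta|$ on the real and imaginary axes (extracted from $F$) with a suitably weighted count of the Blaschke zeros of $B$. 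The technical heart of the argument---and the step I expect to be the main difficulty---is demonstrating that this combination supplies \emph{uniform} high-frequency coercivity even as soliton eigenvalues migrate to the boundary, so that the quadratic bound on $|\hat q|^2$ survives the algebraic-soliton degeneracy; once that is in hand, the transfer principle above yields equicontinuity of $Q_*$.
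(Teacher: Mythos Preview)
Your sketch correctly identifies the central difficulty---zeros of $\ta$ migrating to the boundary, as in the algebraic-soliton limit---but it does not resolve it, and the resolution you gesture at (a functional $\alpha(\kappa;q)$ built from Poisson averages of $\log|\ta|$ together with a weighted Blaschke-zero count) faces a genuine obstruction. The Blaschke data record the \emph{locations} of soliton eigenvalues but carry no direct information about the \emph{frequency content} of $q$: a zero at $z$ sets a scale $|z|$, yet nothing in the Blaschke factor alone controls how much of $\|q\|_{L^2}^2$ lives above a given frequency cutoff. Your coercivity step~(ii) would require exactly such control, uniformly as zeros approach the boundary, and you have supplied no mechanism for obtaining it. The paper does prove a Nevanlinna-type trace formula (Proposition~\ref{p:Tracey}) expressing $\|q\|_{L^2}^2=4\sum\arg z_j+\tfrac{2}{\pi}\int d\mu_q$, which is the natural output of your factorization; but this is a global identity with no frequency localization, and it is used only to bound the \emph{number} of zeros in a sector and to give a lower bound on $|a(i\kappa)|$ once zeros are absent from that sector.

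The paper's route is substantively different. Rather than encode the soliton content in a functional of $\ta$, it \emph{removes} the solitons from $q$ itself via the B\"acklund transformation, and proceeds by induction on the number of zeros of $a(\cdot;q)$ in the sector $\Sigma=\{\tfrac\pi4<\arg k<\pi\}$. The base case (no zeros in $\Sigma$, Proposition~\ref{p:Base}) is a pigeonhole/factorization argument on the positive imaginary axis: one locates a wide frequency gap in $q$, shows that $a(i\kappa;q)$ approximately factors across that gap (Proposition~\ref{p:Factor}), and compares $\arg a$ for $q$ and for $q_{\le N}$ to conclude the high-frequency tail is small. The inductive step rests on two ingredients you do not have: first, equicontinuity of $Q$ confines the zeros of $a(\cdot;\tilde q)$---and hence of $a(\cdot;q)$ for $q\in Q_*$---to a bounded region (Proposition~\ref{p:Zeros}); second, applying the B\"acklund transform at such a bounded zero alters $q$ by terms whose high-frequency tails are controlled by powers of $|z|/N$ (the estimates \eqref{Surgery1}--\eqref{Surgery2}), so both the transform and its inverse preserve equicontinuity (Proposition~\ref{p:Und}). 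This is where frequency information actually enters---not from Blaschke factors, but from explicit $L^2$ estimates on the B\"acklund gauge $\bG$ and shift $\bS$.

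On the $L^2$-boundedness: your asymptotic-expansion argument is morally right for Schwartz data, but the large-$k$ limit of $a(k;q)$ recovers only $e^{-\frac i2 M(q)}$, i.e.\ $M(q)$ modulo $4\pi$. The paper fixes the branch via a winding argument along the negative real axis (Proposition~\ref{p:WindingAlternative} with $\theta=\pi$), using continuity of $a$ up to the boundary and the fact that $|a|\ge 1$ on $(-\infty,0]$.
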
 

We write $\ta(\lambda;q)\equiv \ta(\lambda;\tilde q)$ to emphasize that equality holds as holomorphic functions on the open first quadrant. Clearly this is guaranteed as soon as the two functions agree on a sufficiently rich set, for example, the ray $\lambda\in\{\sqrt{\kappa}\;\! e^{i\pi/4} : \kappa>0\}$ employed in \cite{KNV}.  That paper also restricts attention to individual connected components of $Q_*$ (there called $Q_{**}$), their rationale being that well-posed flows remain within a single such connected component.

Note that Theorem~\ref{t:Main} deliberately only addresses ensembles of Schwartz-class functions. As we have observed earlier, this assertion would not be true if we required only $Q\subseteq L^2$ because the function $q\mapsto\ta(\lambda;q)$ is unable to distinguish rescalings of the algebraic soliton from one another.  Indeed, $\ta(\lambda;q)$ cannot even distinguish an algebraic soliton from zero!  On the other hand, as Lemma~\ref{l:COM} shows, $\ta(\lambda;q)$ does encode $M(q)$ for Schwartz functions. 

Here, we must acknowledge the deft formulation of the conjectures in \cite{KNV}: Requiring the initial data to be Schwartz ensures that the function $\ta(\lambda;q)$ retains enough information about $q$ to be useful, while employing the $L^2$ topology guarantees robust consequences for the well-posedness problem.  Moreover, once well-posedness is proved, the density of Schwartz space allows us to deduce important properties for all $H^s$ solutions.

Let us now turn to the question of how Theorem~\ref{t:Main} will be proved.  While the argument is ultimately quantitative in nature, for the sake of clarity, our synopsis here will be purely qualitative.  We will employ Littlewood--Paley projections to decompose $q$ into frequency pieces; these are defined in \eqref{LP defn}.

Suppose Theorem~\ref{t:Main} were to fail.  It is not difficult to show that $Q_*$ is bounded in $L^2$ and so this failure must be witnessed by a sequence of functions $q_n$ that is bounded but not equicontinuous.  Moreover, this sequence is accompanied by another sequence $\tilde q_n\in Q$ that is equicontinuous and satisfies $\ta(\lambda;q_n)\equiv \ta(\lambda;\tilde q_n)$, whence $M(q_n) = M(\tilde q_n)$.

Due to the failure of equicontinuity, for fixed $N$ and $n$ large, $P_{> N} q_n$ must carry non-trivial $L^2$ norm.  From this and a simple pigeonhole argument, we may find a wide ($n$-dependent) frequency window $\eps^3 N_1<|\xi|< \eps^{-3} N_1$ with three properties:
\begin{equation}\label{BS0}
\| P_{>\eps^3 N_1} \tilde q_n \| \ll 1, \quad \| P_{>\eps^{-3} N_1} q_n \| \gtrsim 1, \qtq{and} \| P_{\eps^3 N_1<\cdot\leq \eps^{-3} N_1} q_n \| \ll 1
\end{equation}
valid for any $n$ sufficiently large.  

We now focus on a particular choice of spectral parameter, namely, $\lambda_1 = \sqrt{iN_1}$.  As $\tilde q_n$ essentially vanishes at such high frequencies, we can understand $\ta(\lambda_1;\tilde q_n)$ very well; indeed,
\begin{align}\label{BS1}
\arg[ \ta(\lambda_1;\tilde q_n) ] \approx -\tfrac12 M(\tilde q_n).
\end{align}
Because $\lambda_1$ is well separated from the frequency regions inhabited by $q_n$ we may likewise understand $\arg[\ta(\lambda_1; q_n)]$ as the sum of the low- and high-frequency contributions:   
\begin{equation}\label{BS2}
\arg[ \ta(\lambda_1; q_n) ] \approx \arg[ \ta(\lambda_1; P_{\leq N_1} q_n) ] + \arg[ \ta(\lambda_1; P_{> N_1} q_n) ] .
\end{equation}
By analyzing these summands, we will deduce that
\begin{equation}\label{BS3}
\arg[ \ta(\lambda_1; q_n) ] \approx -\tfrac12 M(P_{\leq N_1} q_n).
\end{equation}
This yields a contradiction: as $\ta(\lambda;q_n)\equiv \ta(\lambda;\tilde q_n)$, it follows that that
$$
M(P_{> N_1} q_n) \approx  M(q_n) - M(P_{\leq N_1} q_n) \approx M(\tilde q_n) + 2 \arg[ \ta(\lambda_1, q_n) ] \approx 0,
$$
which is inconsistent with the middle condition in \eqref{BS0}.

We have been deliberately vague about the precise meaning of $\arg[ \ta(\lambda;q) ]$ in this outline; it is by no means trivial.  In fact, we shall only be employing this argument when $\ta(\lambda;\tilde q_n)$ has no zeros in the sector $\{\lambda\in \C: \tfrac\pi8<\arg \lambda<\frac\pi2\}$.  In this case, 
there is no trouble in choosing the correct branch of $\log[\ta(\lambda;q)]$ and the above provides a relatively faithful account of what is done in Section~\ref{S:5}.

Suppose now that $\ta(\lambda;\tilde q_n)$ has a single zero in this sector.  Then so too does $\ta(\lambda;q_n)$.  We will employ a B\"acklund transformation to remove this zero from both.  We give a brief review of this transformation at the beginning of Section~\ref{S:6} following \cite{MR3702542}, which we also recommend for further discussion and historical references.

Applying the B\"acklund transformation to $q_n$ and $\tilde q_n$ yields two new functions $\bB(q_n)$ and $\bB(\tilde q_n)$; moreover,
$$
\ta\bigl(\lambda;\bB(q_n)\bigr)\equiv \ta\bigl(\lambda;\bB(\tilde q_n)\bigr)
$$
and this function is zero-free in the sector $\{\lambda\in \C: \tfrac\pi8<\arg \lambda<\frac\pi2\}$.  In Section~\ref{S:6}, we prove that  the sequence $\bB(\tilde q_n)$ inherits equicontinuity from $\tilde q_n$.  This allows us to infer equicontinuity of $\bB(q_n)$ from the argument presented above.  This does not suffice: we must show that the sequence $q_n$ is equicontinuous.

In general, we cannot expect to infer equicontinuity of $q_n$ from $\bB(q_n)$; the location of the zero that is removed matters very much here. However, by Proposition~\ref{p:Zeros}, the location of this zero is very strongly constrained by the equicontinuity of $\tilde q_n$.  This will allow us to demonstrate that equicontinuity of both $\tilde q_n$ and $\bB(q_n)$ ensures equicontinuity of $q_n$.  This settles the case of one zero in the sector.

While one could imagine applying iterated B\"acklund transformations to reduce profiles $q_n$ with more zeros to the argument above, we find it more convenient to argue by induction on the number of zeros.  The details are provided in Section~\ref{S:7}.

Finally, let us provide a more thorough description of the role of each section in completing these arguments.

In Section~\ref{S:2}, we study the function $\ta(\lambda;q)$ through the determinantal representation \eqref{E:JP I}.  We will use this approach to analyze how this quantity behaves when $-i\lambda^2\in [0,\infty)$ is far from the Fourier support of $q$; see Proposition~\ref{p:Winding}.  This is used to justify \eqref{BS1} and \eqref{BS3}.

The expression \eqref{BS2} represents a form of asymptotic factorization.  This is proved in Proposition~\ref{p:Factor}.  The fact that this represents decoupling under separation of scales (rather than via translation) distinguishes it from the related factorization results in \cite{BP,killip2020orbital}.

In Section~\ref{S:3}, we analyze $\ta(\lambda;q)$ from the point of view of (time-independent) scattering theory.  We prove that $\ta(\lambda;q)$ depends on both $\lambda$ and $q$ in a smooth fashion (in the \emph{closed} first quadrant) and use this to verify the Jost--Pais identity.  We also prove Proposition~\ref{p:Zeros}, which documents how equicontinuity constrains the location of any zeros of $\ta(\lambda;q)$.

In Section~\ref{S:4}, we prove the trace formula \eqref{Trace} for general Schwartz $q$. The parallel result for those Schwartz $q$ without spectral singularities was an important tool in \cite{BP}. One application of the trace formula is that it provides a (finite) upper bound on the total number of zeros that may lie within the sector $\{\lambda\in \C: \tfrac\pi8<\arg \lambda<\frac\pi2\}$.  We also use it in Lemma~\ref{4.3} to derive a lower bound on $\ta$. 

Section~\ref{S:5} proves Theorem~\ref{t:Main} for sets $Q$ where $\ta(\lambda;q)$ has no zeros in the sector $\{\lambda\in \C: \tfrac\pi8<\arg \lambda<\frac\pi2\}$.  This constitutes the base step of our inductive argument.

We begin Section~\ref{S:6} by reviewing the B\"acklund transform associated to \eqref{DNLS}, closely following \cite{MR3702542}.  The principal novelty of this section is Proposition~\ref{p:Und} which shows (to put it loosely) that both the B\"acklund transform and its inverse preserve $L^2$-equicontinuity. The paper ends with Section~\ref{S:7}, which completes the inductive step of our argument.

\subsection{Notation}  As it will take us some time to develop the necessary prerequisites for proving the Jost--Pais identity \eqref{E:JP I}, we must introduce an alternate notation for the determinant appearing there.  Considering that the spectral parameter $\lambda$ only appears squared here, it is natural to adopt a different parameterization based on $k=\lambda^2\in \C^+=\{k\in\C : \Im k >0\}$, with the happy consequence that we shall be able to employ the well-documented theory of holomorphic functions in the half plane.

Given $k\in\C^+$ and $q\in L^2(\R)$, we first define operators
\[
\Lambda(k;q) = \bigl(\tfrac1ik - \p\bigr)^{-\frac12}q\bigl(\tfrac1ik + \p\bigr)^{-\frac12}\qtq{and}\Gamma(k;q) = \bigl(\tfrac1ik + \p\bigr)^{-\frac12}\bar q\bigl(\tfrac1ik - \p\bigr)^{-\frac12}.
\]
From \cite[Lemma 4.1]{MR3820439} we have
\begin{equation}\label{I2}
\|\Lambda\|_{\I_2}^2 = \|\Gamma\|_{\I_2}^2 \sim \int_\R\log\bigl(4 + \tfrac{|\xi|^2}{|\Im k|^2}\bigr)\frac{|\hat q(\xi - 2\Re k)|^2}{\sqrt{4|\Im k|^2 + |\xi|^2}}\,d\xi\lesssim \tfrac1{\Im k}\|q\|_{L^2}^2,
\end{equation}
where $\I_2$ denotes the class of Hilbert--Schmidt operators.  This estimate shows that for any \(q\in L^2\) the perturbation determinant
\begin{align}\label{E:a defn}
a(k;q) := \det\bigl[1 - k\Lambda(k;q)\Gamma(k;q)\bigr]
\end{align}
is well-defined and analytic for \(k\in \C^+\).  This is not precisely the determinant appearing in \eqref{E:JP I} but it is easy to prove that they agree: As
\begin{equation*}
\bigl\| k  (-ik - \p)^{-1}q (-ik + \p)^{-1}\bar q \bigr\|_{\I_1} \leq |k| \cdot \| q \|_{L^2}^2 \int_\R |k-\xi|^{-2}\,d\xi <\infty,
\end{equation*}
we may permute the factors and so deduce that
\begin{align}\label{E:alt a defn}
a(k;q) = \det\bigl[1 - k  (-ik - \p)^{-1}q (-ik + \p)^{-1}\bar q \bigr].
\end{align}

The virtue of adopting \eqref{E:a defn} as our primary definition of the perturbation determinant is that it is better suited to estimating the contribution of each Littlewood--Paley piece of $q$ and $\bar q$.

Our notation for the Littlewood--Paley decomposition is standard: For each $N\in 2^\Z$ we define $P_N$ as a smooth localization (based on a partition of unity) to those frequencies $\xi\in\R$ with $\frac N2\leq |\xi|\leq 2N$. We then define
\begin{equation}\label{LP defn}
P_{>N} = \sum_{K>N} P_K \qtq{and} P_{\leq N} = I - P_{>N}.
\end{equation}
Note that if $q$ is Schwartz, then so are $q_N:=P_N q$, $q_{\leq N}:=P_{\leq N}q$, and $q_{>N}:=P_{>N }q$.

In light of \eqref{M_of_q}, we will avoid using $M$ as a dyadic frequency parameter.  Rather, given \(M>0\), we define the ball
\begin{align}\label{BM}
B_M = \bigl\{q\in L^2:\|q\|_{L^2}^2\leq M\bigr\}.
\end{align}

This vocabulary allows us to give a more quantitative Fourier-based characterization of equicontinuity:  \(Q\subseteq B_M\) is equicontinuous if and only if, for any \(\epsilon>0\), we may find \(N = N(\epsilon,Q)\in 2^\Z\) so that
\begin{equation}\label{HF-UNI}
\sup_{q\in Q}\|q_{>N}\|_{L^2}^2\leq \epsilon^2 M.
\end{equation}
This equivalence is quite elementary to verify; see \cite[\S5]{MR3990604} for details.

As our last piece of notation, we recall the Pauli matrices:
\begin{align}\label{Pauli}
\sigma_1 = \begin{bmatrix}0&1\\1& 0\end{bmatrix},\quad
\sigma_2 = \begin{bmatrix}0&-i\\i&0\end{bmatrix},\qtq{and}
\sigma_3 = \begin{bmatrix}1&0\\0&-1\end{bmatrix},
\end{align}
the last of which was seen already in \eqref{L0 defn}.

\subsection*{Acknowledgements} R. K. was supported by NSF grant DMS-1856755 and M.~V. by grants DMS-1763074 and DMS-2054194.

\section{The perturbation determinant}\label{S:2}

In this section we analyze the perturbation determinant $a(k;q)$ defined in \eqref{E:a defn}.  We begin with some basic estimates for the determinant:

\begin{lem}\label{l:Det}
Let \(A\in \I_1\). Then
\begin{align}
\bigl|\det(1 + A)\bigr| &\leq \exp(\|A\|_{\I_1}),\label{det BASIC}\\
\bigl|\det(1 + A) - 1\bigr|&\leq \|A\|_{\I_1}\exp(\|A\|_{\I_1}),\label{det diff -1}\\
\bigl|\det(1 + A) - \exp\{\tr A\}\bigr|&\leq \tfrac12\|A\|_{\I_2}^2\exp(\|A\|_{\I_1}).\label{det diff 0}
\end{align}

Further, if \(B\in \I_1\) and we have
\[
1 + \|A\|_{\I_1} + \|B\|_{\I_1}\leq M,
\]
then
\begin{align}
\det(1 + A)\det(1 + B) &= \det\bigl[(1 + A)(1 + B)\bigr],\label{det prod}\\
\bigl|\det(1 + A) - \det(1 + B)\bigr|&\leq e^M\|A - B\|_{\I_1},\label{det diff 1}\\
\bigl|\det(1 + A) - \det(1 + B)\bigr|&\leq e^{2M^2}\Bigl(\bigl|\tr(A - B)\bigr| + \|A - B\|_{\I_2}\Bigr).\label{det diff 2}
\end{align}
\end{lem}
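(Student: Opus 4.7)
The plan is to deduce all six inequalities from the spectral representation of the Fredholm determinant together with the classical Weyl bounds on singular values. The framework I would rely on throughout is: (i) Lidskii's theorem $\det(1+A)=\prod_j(1+\mu_j(A))$, with $\{\mu_j(A)\}$ the eigenvalues counted with algebraic multiplicity; (ii) the Weyl inequalities $\sum_j|\mu_j(A)|^p\le\|A\|_{\I_p}^p$ for $p\in\{1,2\}$; and (iii) the Plemelj--Smithies expansion $\det(1+zA)=\sum_{n\ge 0}z^n\alpha_n(A)$, where $\alpha_n$ is the symmetric multilinear trace of the $n$-fold exterior power, with Hadamard bound $|\alpha_n(A)|\le\|A\|_{\I_1}^n/n!$.

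For \eqref{det BASIC}, I would combine $|1+\mu_j|\le e^{|\mu_j|}$ with Weyl's $\ell^1$ bound. For \eqref{det diff -1} the Plemelj--Smithies series yields $|\det(1+A)-1|\le\sum_{n\ge 1}\|A\|_{\I_1}^n/n!=e^{\|A\|_{\I_1}}-1\le\|A\|_{\I_1}e^{\|A\|_{\I_1}}$. For \eqref{det diff 0} I would factor $\det(1+A)=e^{\tr A}\det_2(1+A)$ via the Hilbert--Carleman regularized determinant; Lidskii presents $\det_2(1+A)-1=\prod_j(1+\mu_j)e^{-\mu_j}-1$, and the elementary estimate $|(1+z)e^{-z}-1|\le\tfrac12|z|^2e^{|z|}$ (obtained by integrating $g'(z)=-ze^{-z}$) combined with Weyl's $\ell^2$ bound delivers the required factor $\tfrac12\|A\|_{\I_2}^2$, while $|e^{\tr A}|\le e^{\|A\|_{\I_1}}$ accounts for the outer exponential.

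For the multiplicativity \eqref{det prod} I would approximate $A,B\in\I_1$ by finite-rank operators supported in a common finite-dimensional subspace, on which the Fredholm determinant reduces to the matrix determinant and multiplicativity is classical; passing to the limit is justified by the $\I_1$-Lipschitz estimate \eqref{det diff 1} itself. For \eqref{det diff 1}, I would telescope using the symmetric multilinear character of $\alpha_n$: writing $\widetilde\alpha_n$ for its polarization, one has $\alpha_n(A)-\alpha_n(B)=\sum_{k=0}^{n-1}\widetilde\alpha_n(A-B;A,\ldots,A,B,\ldots,B)$ with each summand bounded by $\|A-B\|_{\I_1}M^{n-1}/n!$; summing over $n\ge 1$ produces the factor $e^M$. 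For the sharpest bound \eqref{det diff 2}, I would split
\begin{equation*}
\det(1+A)-\det(1+B)=\bigl(e^{\tr A}-e^{\tr B}\bigr)\det_2(1+A)+e^{\tr B}\bigl(\det_2(1+A)-\det_2(1+B)\bigr),
\end{equation*}
bounding the first summand by $|\tr(A-B)|\cdot\exp(\Gamma\|A\|_{\I_2}^2)$ via the Carleman estimate $|\det_2(1+A)|\le\exp(\Gamma\|A\|_{\I_2}^2)$ and $|e^x-e^y|\le|x-y|\max(e^{\Re x},e^{\Re y})$, and the second by running the multilinear telescoping above in the Hilbert--Schmidt norm instead of $\I_1$, which yields a factor $\|A-B\|_{\I_2}$.

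The principal obstacle will be tightening the constants in \eqref{det diff 2} to match the precise form $e^{2M^2}$ stated in the lemma: spectral methods naturally produce prefactors of shape $e^{cM^2}$ with loose $c$, and pinning down the exact value requires carefully combining Carleman's bound with the hypothesis $1+\|A\|_{\I_1}+\|B\|_{\I_1}\le M$ via $\|\cdot\|_{\I_2}\le\|\cdot\|_{\I_1}\le M$ at the right moments. All remaining pieces of the lemma are standard facts from the Gohberg--Krein/Simon theory of infinite-dimensional determinants and should follow routinely from the Lidskii--Weyl--Plemelj--Smithies framework.
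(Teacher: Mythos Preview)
Your overall strategy---Lidskii, Weyl, Plemelj--Smithies, and the regularized determinant $\det_2$---is exactly the framework the paper uses, and your treatments of \eqref{det BASIC}, \eqref{det diff -1}, \eqref{det prod}, \eqref{det diff 1}, and \eqref{det diff 2} match the paper's closely; in particular, for \eqref{det diff 2} the paper performs precisely your splitting via $\det_2$ and then quotes the bound $|\det_2(1+A)-\det_2(1+B)|\le e^{M^2/2}\|A-B\|_{\I_2}$ from Simon's book rather than rederiving it by telescoping.

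There is one concrete gap in your sketch, concerning \eqref{det diff 0}. You write $\det(1+A)-e^{\tr A}=e^{\tr A}\bigl(\det_2(1+A)-1\bigr)$ and propose to extract the factor $\tfrac12\|A\|_{\I_2}^2$ from $\det_2(1+A)-1$ using $|(1+z)e^{-z}-1|\le\tfrac12|z|^2e^{|z|}$, leaving the exponential to come from $|e^{\tr A}|\le e^{\|A\|_{\I_1}}$. But the pointwise bound on each factor $(1+\mu_j)e^{-\mu_j}$ does not give $|\det_2(1+A)-1|\le\tfrac12\sum|\mu_j|^2$ on its own: telescoping the product contributes a further factor of at least $\prod_j|(1+\mu_j)e^{-\mu_j}|$, and the $e^{|\mu_k|}$ sitting inside the pointwise bound also survives. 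The upshot is a bound of the form $\tfrac12\|A\|_{\I_2}^2\exp\bigl(c\|A\|_{\I_1}+c'\|A\|_{\I_2}^2\bigr)$, not the clean single exponential stated. The paper avoids this by working with the eigenvalue expansion $\det(1+A)=1+\sum_{n\ge1}\tfrac1{n!}\sum_{\text{distinct}}\lambda_{i_1}\cdots\lambda_{i_n}$ directly: comparing with $e^{\tr A}=\sum_{n\ge0}\tfrac1{n!}\bigl(\sum\lambda_j\bigr)^n$, the difference consists exactly of tuples with a repeated index, and bounding those (or equivalently, telescoping $\prod(1+\mu_j)-\prod e^{\mu_j}$ and using $|e^{\mu}-1-\mu|\le\tfrac12|\mu|^2e^{|\mu|}$) yields $\tfrac12\|A\|_{\I_2}^2e^{\|A\|_{\I_1}}$ with no surplus exponential. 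This is a minor repair, but it is the one place where the $\det_2$ route you describe does not deliver the stated constants.
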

\begin{proof}
The estimate \eqref{det diff 0} is proved in \cite[Lemma~3.2]{KNV} by combining the Weyl inequalities (cf. \cite[Theorem~1.15]{MR2154153}) with the identity
\[
\det(1+A) = 1 + \sum_{n=1}^\infty \frac1{n!}\sum_{\substack{i_1,\dots,i_n\\\text{distinct}}}\lambda_{i_1}\lambda_{i_2}\dots\lambda_{i_n}.
\]
Here, \(\lambda_j\) are the non-zero eigenvalues of \(A\) repeated according to algebraic multiplicity. This expression also yields \eqref{det BASIC} and \eqref{det diff -1}:
\begin{align*}
\LHS{det BASIC} &\leq \sum_{n=0}^\infty \frac1{n!}\Bigl(\sum_j|\lambda_j|\Bigr)^n\leq \RHS{det BASIC},\\
\LHS{det diff -1} &\leq \sum_{n=1}^\infty \frac1{n!}\Bigl(\sum_j|\lambda_j|\Bigr)^n\leq \|A\|_{\I_1}\sum_{\ell=0}^\infty \frac{\|A\|_{\I_1}^\ell}{(\ell+1)!}\leq \RHS{det diff -1}.
\end{align*}

The identity \eqref{det prod} is proved in~\cite[Theorem~3.5]{MR2154153} and the estimate \eqref{det diff 1} in~\cite[Theorem~3.4]{MR2154153}.

For \eqref{det diff 2} we employ the regularized determinant
\[
\det\!_2(1 + A) = \det(1 + A)\exp\{-\tr A \},
\]
and from~\cite[Theorem~9.1]{MR2154153} we have
\[
\bigl|\det\!_2(1 + A) - \det\!_2(1 + B)\bigr|\leq e^{\frac12 M^2}\|A - B\|_{\I_2}.
\]
We then combine this with \eqref{det BASIC} to obtain
\begin{align*}
\LHS{det diff 2}&\leq \bigl|\det(1 + A) - \exp\bigl\{\tr(A-B)\bigr\}\det(1 + B)\bigr|\\
&\quad + \bigl|\exp\bigl\{\tr(A - B)\bigr\} -1\bigr| \bigl|\det(1 + B)\bigr|\\
&\leq e^M \bigl|\det\!_2(1 + A) - \det\!_2(1 + B)\bigr| + e^{2M}\bigl|\tr(A - B)\bigr|\\
&\leq e^{2M^2}\Bigl(\bigl|\tr(A - B)\bigr| + \|A - B\|_{\I_2}\Bigr). \qedhere
\end{align*}
\end{proof}

Next, we record some useful estimates for frequency-localized potentials. For \(N\in 2^\Z\), we introduce the notation
\[
\Lambda_N = \Lambda(k;q_N),\qquad\Gamma_N = \Gamma(k;q_N),
\]
with similar definitions for \(\Lambda_{\leq N}\), \(\Gamma_{\leq N}\), etc. Operator estimates for $\Gamma$ can be deduced from those for \(\Lambda\) (and vice versa) by conjugating with a spatial reflection. Thus, we will typically only state these bounds in terms of either \(\Lambda\) or \(\Gamma\) in what follows.

The following estimates are essentially identical to those in~\cite[Lemma~2.4]{KNV}:

\begin{lem}\label{l:Loc} For \(k\in \C^+\) we have the estimate
\begin{equation}
\sqrt{\Im k}\|\Lambda\|_{\op} \leq \sqrt{\Im k}\|\Lambda\|_{\I_2}\lesssim \|q\|_{L^2}.\label{OP-BASIC}
\end{equation}
Further, if \(k = i\kappa\in i\R^+\) and \(N\in 2^\Z\), we have the estimates
\begin{align}
\sqrt{\kappa}\|\Lambda_{\leq N}\|_{\op} &\lesssim \sqrt{\tfrac N\kappa}\|q_{\leq N}\|_{L^2},\label{Op-Low}\\
\sqrt{\kappa}\|\Lambda_{>N}\|_{\I_2}  &\lesssim \sqrt{\tfrac{\kappa}N\log\bigl(4 + \tfrac{N^2}{\kappa^2}\bigr)}\|q_{>N}\|_{L^2}.\label{I2-Hi}
\end{align}
\end{lem}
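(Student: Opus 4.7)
The plan is to take the Fourier-side Hilbert--Schmidt formula \eqref{I2} as a black box and combine it with the trivial multiplier bound $\|(\kappa\mp\p)^{-1/2}\|_{L^2\to L^2}=\kappa^{-1/2}$ (read off from the Fourier symbols $(\kappa\mp i\xi)^{-1/2}$) and, for the low-frequency operator-norm estimate, with Bernstein's inequality. No sophisticated spectral theory is required; the content is dimensional analysis plus careful dyadic book-keeping.

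First I would derive \eqref{OP-BASIC} directly. The inequality $\|\Lambda\|_{\op}\leq\|\Lambda\|_{\I_2}$ is standard, and the second inequality is precisely the last estimate in the already-cited chain \eqref{I2}, obtained from $\log(4+|\xi|^2/|\Im k|^2)\cdot(4|\Im k|^2+|\xi|^2)^{-1/2}\lesssim |\Im k|^{-1}$ after integration against $|\hat q|^2$ and Plancherel.

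For \eqref{Op-Low} I would write $\Lambda_{\leq N}=(\kappa-\p)^{-1/2}\, M_{q_{\leq N}}\,(\kappa+\p)^{-1/2}$ and estimate the three factors in turn as operators $L^2\to L^2$. The outer two factors contribute $\kappa^{-1/2}$ each. For the middle (multiplication) factor, Bernstein gives $\|q_{\leq N}\|_{L^\infty}\lesssim N^{1/2}\|q_{\leq N}\|_{L^2}$, so multiplication by $q_{\leq N}$ has $L^2\to L^2$ norm $\lesssim N^{1/2}\|q_{\leq N}\|_{L^2}$. Multiplying the three bounds yields
\[
\|\Lambda_{\leq N}\|_{\op}\lesssim \kappa^{-1}\,N^{1/2}\,\|q_{\leq N}\|_{L^2},
\]
which rearranges to \eqref{Op-Low}.

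For \eqref{I2-Hi}, I would split into two regimes according to the relative size of $N$ and $\kappa$. When $N\leq\kappa$, the claim already follows from \eqref{OP-BASIC} applied to $q_{>N}$, since in that regime $\kappa^{-1}\leq N^{-1}\lesssim N^{-1}\log(4+N^2/\kappa^2)$. When $N>\kappa$, I would insert $q_{>N}$ in \eqref{I2} and, using that $\widehat{q_{>N}}$ is essentially supported on $|\xi|\gtrsim N\geq\kappa$, bound $\sqrt{4\kappa^2+\xi^2}\sim|\xi|$ and $\log(4+\xi^2/\kappa^2)\sim\log(|\xi|^2/\kappa^2)$. A dyadic decomposition in $|\xi|\sim M$ with $M\geq N$ then gives
\[
\|\Lambda_{>N}\|_{\I_2}^2\lesssim \sum_{M\geq N}\tfrac{\log(M^2/\kappa^2)}{M}\,\|q_M\|_{L^2}^2.
\]
Writing $\log(M^2/\kappa^2)=\log(N^2/\kappa^2)+2\log(M/N)$ separates the sum into two pieces, each controlled by $\frac{\log(4+N^2/\kappa^2)}{N}\,\|q_{>N}\|_{L^2}^2$ via the convergent series $\sum_j 2^{-j}$ and $\sum_j j\,2^{-j}$. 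The only mildly delicate step is this last dyadic book-keeping, where one must ensure that the logarithmic factor absorbs the extra $\log(M/N)$; once that is observed, \eqref{I2-Hi} drops out.
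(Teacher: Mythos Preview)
Your proposal is correct and follows essentially the same route as the paper: \eqref{OP-BASIC} from \eqref{I2}, \eqref{Op-Low} from the operator factorization together with Bernstein, and \eqref{I2-Hi} from \eqref{I2} plus a dyadic decomposition in frequency. The only cosmetic differences are that the paper bounds each $\Lambda_K$ individually via Bernstein and then sums to get \eqref{Op-Low} (you estimate $q_{\leq N}$ directly, which is marginally cleaner), and for \eqref{I2-Hi} the paper compresses your case split and dyadic summation into the single remark $\|\Lambda_{>N}\|_{\I_2}^2 \sim \sum_{K>N}\|\Lambda_K\|_{\I_2}^2$, leaving the arithmetic you spell out implicit.
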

\begin{proof}
The estimate \eqref{OP-BASIC} follows from \eqref{I2}. Using Bernstein's inequality, for \(k = i\kappa\) we may bound
\[
\|\Lambda_N\|_{\op}\lesssim \tfrac1\kappa\|q_N\|_{L^\infty}\lesssim \tfrac{\sqrt N}{\kappa}\|q_N\|_{L^2}.
\]
We then sum to obtain \eqref{Op-Low}. The estimate \eqref{I2-Hi} follows from \eqref{I2} and the fact that
\[
\|\Lambda_{>N}\|_{\I_2}^2 \sim \sum\limits_{K>N}\|\Lambda_K\|_{\I_2}^2.\qedhere
\]
\end{proof}

Our first application of Lemmas~\ref{l:Det} and \ref{l:Loc} is a description of the behavior of $a(i\kappa;q)$ for $\kappa$ large and small.  Such asymptotics were also analyzed in \cite{BP} via similar operator-theoretic means, although their primary focus was on the case $q\in H^{1/2}(\R)$.

\begin{prop}\label{p:Winding}Let \(0<\epsilon<\frac12\) and \(q\in B_M\), where $B_M$ is as defined in \eqref{BM}.

\smallskip

\noindent \textup{(i)} If \(N\in 2^\Z\) is chosen so that
\begin{equation}\label{HF}
\|q_{>N}\|_{L^2}^2\leq \epsilon^2 M,
\end{equation}
then, for any \(\kappa\geq\frac N{\epsilon^2}\), we have
\begin{equation}\label{No high winding}
\Bigl|a(i\kappa;q) - e^{-\tfrac i2\|q\|_{L^2}^2}\Bigr|\lesssim_M \epsilon^2.
\end{equation}

\smallskip

\noindent \textup{(ii)} If \(N\in 2^\Z\) is chosen so that
\begin{equation}\label{LF}
\|q_{\leq N}\|_{L^2}^2\leq \epsilon^2 M,
\end{equation}
then, for any \(0<\kappa\leq\epsilon^2N\), we have
\begin{equation}\label{No low winding}
\Bigl|a(i\kappa;q) - 1\Bigr|\lesssim_M \epsilon^2\log\bigl(\tfrac1\epsilon\bigr).
\end{equation}
\end{prop}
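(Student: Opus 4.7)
The plan is to apply the determinant estimates of Lemma~\ref{l:Det} to $A := -k\Lambda(k;q)\Gamma(k;q)$ with $k = i\kappa$, and to bound the $\I_1$, $\I_2$, and trace norms of $A$ via Lemma~\ref{l:Loc} after a Littlewood--Paley split $q = q_{\leq N} + q_{>N}$. In both parts, the overall exponential factor $\exp(\|A\|_{\I_1})$ is automatically harmless: Cauchy--Schwarz and \eqref{OP-BASIC} yield $\|A\|_{\I_1} \leq \kappa\|\Lambda\|_{\I_2}\|\Gamma\|_{\I_2} \lesssim \|q\|_{L^2}^2 \leq M$.

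For part (ii) I would invoke \eqref{det diff -1} and reduce to showing $\|A\|_{\I_1} \lesssim_M \epsilon^2\log(1/\epsilon)$. Expanding $A$ into four pieces according to $\Lambda = \Lambda_{\leq N} + \Lambda_{>N}$ and $\Gamma = \Gamma_{\leq N} + \Gamma_{>N}$, the low-low term is bounded by $\kappa\|\Lambda_{\leq N}\|_{\I_2}^2 \lesssim \|q_{\leq N}\|^2 \leq \epsilon^2 M$ via \eqref{OP-BASIC}. The high-high term uses \eqref{I2-Hi}: the prefactor $(\kappa/N)\log(4 + N^2/\kappa^2)$ becomes $\lesssim \epsilon^2\log(1/\epsilon)$ under the hypothesis $\kappa \leq \epsilon^2 N$, which is exactly the source of the logarithm in \eqref{No low winding}. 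The two mixed terms are controlled by the geometric mean of the above via Cauchy--Schwarz.

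For part (i) I would use the sharper estimate \eqref{det diff 0} to reduce matters to two sub-claims: (a) $\|A\|_{\I_2}^2 \lesssim_M \epsilon^2$, and (b) $\tr A = -\tfrac i2 M(q) + \bigO_M(\epsilon^2)$. Claim (a) follows from $\|A\|_{\I_2} \leq \kappa\|\Lambda\|_{\op}\|\Gamma\|_{\I_2}$: the $\I_2$ factor is $\lesssim M/\kappa$ by \eqref{OP-BASIC}, while for the operator-norm factor I would split $\Lambda = \Lambda_{\leq N} + \Lambda_{>N}$ and use \eqref{Op-Low} on the low piece and $\|\cdot\|_{\op} \leq \|\cdot\|_{\I_2}$ on the high piece. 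In both summands the hypothesis $\kappa \geq N/\epsilon^2$ converts the output into $\kappa^2\|\Lambda\|_{\op}^2 \lesssim \epsilon^2\kappa M$, which combines with $\|\Gamma\|_{\I_2}^2 \lesssim M/\kappa$ to give $\|A\|_{\I_2}^2 \lesssim_M \epsilon^2$.

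For (b), cyclicity of the trace and the explicit Green's functions of $(\kappa\pm\p)^{-1}$ yield
\[
\tr A = -\frac1{2\pi}\int_\R |\hat q(\xi)|^2\,\frac{i\kappa}{2\kappa - i\xi}\,d\xi,
\]
whose pointwise limit as $\kappa \to \infty$ is $-\tfrac i2 |\hat q(\xi)|^2/(2\pi)$, integrating to $-\tfrac i2 M(q)$ by Plancherel. To quantify the error, I use the identity $|i\kappa/(2\kappa - i\xi) - i/2| = |\xi|/(2\sqrt{4\kappa^2 + \xi^2})$, which is $\lesssim N/\kappa \leq \epsilon^2$ on $|\xi| \leq N$ and $\lesssim 1$ on $|\xi|>N$, where the smallness is supplied by $\|q_{>N}\|^2 \leq \epsilon^2 M$. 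Combining (a), (b), the elementary $|e^z - e^w| \leq e^{\max(\Re z,\Re w)}|z-w|$, and \eqref{det diff 0} yields \eqref{No high winding}. The main subtlety is threading the dyadic decomposition in part (a) so that one simultaneously extracts an $\epsilon^2$ from $\Lambda$ and a factor $1/\kappa$ from $\Gamma$; beyond that, the argument is bookkeeping around Lemmas~\ref{l:Det} and~\ref{l:Loc} together with the explicit trace formula.
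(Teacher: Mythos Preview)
Your proposal is correct and follows essentially the same route as the paper: both parts rest on the determinant estimates \eqref{det diff -1} and \eqref{det diff 0} from Lemma~\ref{l:Det}, the Littlewood--Paley split of $\Lambda$ and $\Gamma$ via Lemma~\ref{l:Loc}, and the explicit trace identity \eqref{trace}. The only cosmetic differences are that the paper bounds $\|\Lambda\|_{\I_2}$ (resp.\ $\|\Lambda\|_{\op}$) directly by the triangle inequality rather than expanding $A$ into four pieces, and your trace formula carries a $1/(2\pi)$ that reflects a different Fourier normalization than the one implicit in \eqref{trace}.
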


\begin{proof}(i) Taking \(k = i\kappa\) for \(\kappa\geq \frac N{\epsilon^2}\), we apply \eqref{OP-BASIC} and \eqref{Op-Low} to bound
\begin{align*}
\sqrt{\kappa}\|\Lambda\|_{\op} \leq \sqrt{\kappa}\|\Lambda_{>N}\|_{\I_2} + \sqrt{\kappa}\|\Lambda_{\leq N}\|_{\op} \lesssim \|q_{>N}\|_{L^2}+\sqrt{\tfrac N{\kappa}}\|q\|_{L^2}\lesssim \epsilon\sqrt{M}.
\end{align*}
We then apply \eqref{OP-BASIC} and \eqref{det diff 0} to obtain
\begin{align*}
\Bigl|a(i\kappa;q) - \exp\bigl\{-\tr\bigl(i\kappa\Lambda\Gamma\bigr)\bigr\}\Bigr|&\lesssim \kappa^2\|\Lambda\|_{\op}^2\|\Gamma\|_{\I_2}^2\exp\Bigl(\kappa\|\Lambda\|_{\I_2}\|\Gamma\|_{\I_2}\Bigr)\\&\lesssim \epsilon^2M^2 \exp\bigl(CM\bigr),
\end{align*}
for some constant \(C>0\).

A computation (see \cite[Lemma~2.2]{KNV}) shows that 
\begin{align}\label{trace}
\tr\bigl\{k\Lambda(f)\Gamma(h)\bigr\}= \int_\R\frac{ik}{2k+\xi}\hat f(\xi) \overline{\hat h}(\xi)\, d\xi.
\end{align}
Using \eqref{trace} with $f=h=q$ and $k=i\kappa$, we have
\[
\tfrac i2\|q\|_{L^2}^2 - \tr\bigl\{i\kappa\Lambda\Gamma\bigr\} = \int_\R \frac{i\xi|\hat q(\xi)|^2}{2(2i\kappa + \xi)}\,d\xi,
\]
and hence for $\kappa\geq \frac{N}{\eps^2}$,
\[
\Bigl|\tfrac i2\|q\|_{L^2}^2 - \tr\bigl\{i\kappa\Lambda\Gamma\bigr\}\Bigr|\lesssim \tfrac{N}{\kappa}\|q_{\leq N}\|_{L^2}^2 + \|q_{>N}\|_{L^2}^2\lesssim \epsilon^2 M.
\]

Combining these bounds, we obtain \eqref{No high winding}.

\smallskip

\noindent (ii) Assuming $k=i\kappa$ with $0<\kappa\leq \eps^2N$, and combining \eqref{OP-BASIC} with \eqref{I2-Hi}, we obtain
\begin{align*}
\sqrt{\kappa}\|\Lambda\|_{\I_2} &\leq \sqrt{\kappa}\|\Lambda_{\leq N}\|_{\I_2} + \sqrt{\kappa}\|\Lambda_{>N}\|_{\I_2}\\
&\lesssim \|q_{\leq N}\|_{L^2} + \sqrt{\tfrac{\kappa}N\log\bigl(4 + \tfrac{N^2}{\kappa^2}\bigr)}\|q\|_{L^2}\\
&\lesssim \epsilon \sqrt{\log\bigl(\tfrac1\epsilon\bigr)}\sqrt M.
\end{align*}
We then apply \eqref{det diff -1} to bound
\[
\Bigl|a(i\kappa;q) - 1\Bigr|\lesssim \kappa\|\Lambda\|_{\I_2}\|\Gamma\|_{\I_2}\exp\Bigl(\kappa\|\Lambda\|_{\I_2}\|\Gamma\|_{\I_2}\Bigr)\lesssim \epsilon^2\log\bigl(\tfrac1{\epsilon}\bigr) M\exp\bigl(CM\bigr),
\]
for some \(C>0\), which gives us \eqref{No low winding}.
\end{proof}

Proposition~\ref{p:Winding} provides an alternate path to proving the following result shown already in \cite{BP}:

\begin{cor}\label{c:Limits}
For all \(q\in L^2\) we have
\[
\lim_{\kappa\to0}a(i\kappa;q) = 1\qtq{and}\lim_{\kappa\to\infty}a(i\kappa;q) = e^{-\frac i2\|q\|_{L^2}^2}.
\]
\end{cor}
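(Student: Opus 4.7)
The corollary is a direct consequence of Proposition~\ref{p:Winding}, which has been set up specifically to control $a(i\kappa;q)$ at the two asymptotic regimes. My plan is to fix $q\in L^2$, set $M = 1 + \|q\|_{L^2}^2$ so that $q\in B_M$, and for each $\epsilon\in(0,\tfrac12)$ exhibit a threshold in $\kappa$ beyond which the desired estimate holds. The only preliminary observation needed is that the Littlewood--Paley tails of any $L^2$ function vanish at the appropriate endpoint: namely, for the smooth frequency projections defined in \eqref{LP defn}, dominated convergence on the Fourier side gives
\[
\lim_{N\to\infty} \|q_{>N}\|_{L^2} = 0 \qtq{and} \lim_{N\to 0} \|q_{\leq N}\|_{L^2} = 0,
\]
where the limits are taken through $N\in 2^\Z$.

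For the limit at infinity, given $\epsilon$ I would choose $N\in 2^\Z$ large enough that the high-frequency hypothesis \eqref{HF} of Proposition~\ref{p:Winding}(i) is satisfied, namely $\|q_{>N}\|_{L^2}^2 \leq \epsilon^2 M$. Then \eqref{No high winding} yields
\[
\bigl|a(i\kappa;q) - e^{-\tfrac i2 \|q\|_{L^2}^2}\bigr| \lesssim_M \epsilon^2 \qtq{for all} \kappa\geq \tfrac{N}{\epsilon^2}.
\]
Since $\epsilon$ is arbitrary, this establishes the second limit.

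For the limit at zero, I would symmetrically pick $N\in 2^\Z$ small enough that \eqref{LF} holds, i.e. $\|q_{\leq N}\|_{L^2}^2 \leq \epsilon^2 M$. Then \eqref{No low winding} gives
\[
\bigl|a(i\kappa;q) - 1\bigr| \lesssim_M \epsilon^2 \log\bigl(\tfrac1\epsilon\bigr) \qtq{for all} 0<\kappa\leq \epsilon^2 N.
\]
As $\epsilon^2 \log(1/\epsilon)\to 0$ when $\epsilon\to 0$, this yields the first limit and completes the proof. There is no genuine obstacle here: the two halves of Proposition~\ref{p:Winding} are already tailor-made for these two asymptotics, and the only ingredient beyond them is the elementary fact that the extreme Littlewood--Paley tails of an $L^2$ function vanish.
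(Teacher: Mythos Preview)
Your proposal is correct and follows exactly the approach the paper intends: the paper simply states that Proposition~\ref{p:Winding} provides a path to this corollary, and you have spelled out precisely that path by invoking each half of the proposition with a suitable dyadic $N$ and using the elementary vanishing of the Littlewood--Paley tails of a fixed $L^2$ function.
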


From Proposition~\ref{p:Winding}(i), we also obtain the following:

\begin{cor}\label{c:NHWU}
Let \(\epsilon>0\) and \(Q\subseteq B_M\) be equicontinuous. Given \(N\in 2^\Z\) so that \(Q\) satisfies \eqref{HF-UNI} and \(\kappa\geq\frac N{\epsilon^2}\), we have
\begin{equation}\label{No high winding-UNI}
\sup_{q\in Q} \,\Bigl|a(i\kappa;q) - e^{-\tfrac i2\|q\|_{L^2}^2}\Bigr|\lesssim_M \epsilon^2.
\end{equation}
\end{cor}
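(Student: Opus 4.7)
The plan is to deduce this corollary directly from Proposition~\ref{p:Winding}(i) applied uniformly in $q \in Q$. The key observation is that the uniform high-frequency bound \eqref{HF-UNI}, namely
\[
\sup_{q\in Q}\|q_{>N}\|_{L^2}^2 \leq \epsilon^2 M,
\]
is precisely the hypothesis \eqref{HF} of Proposition~\ref{p:Winding}(i), but now satisfied by \emph{every} $q \in Q$ with the same choice of $N$.

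First, I would fix an arbitrary $q \in Q$. Since $Q \subseteq B_M$, we have $q \in B_M$, and by hypothesis $\|q_{>N}\|_{L^2}^2 \leq \epsilon^2 M$. Given also that $\kappa \geq N/\epsilon^2$, Proposition~\ref{p:Winding}(i) applies to $q$ and yields
\[
\Bigl| a(i\kappa;q) - e^{-\frac{i}{2}\|q\|_{L^2}^2} \Bigr| \lesssim_M \epsilon^2.
\]
Second, I would check that the implicit constant depends only on $M$ (and not on $q$): inspecting the proof of Proposition~\ref{p:Winding}(i), the bounds arise from $\|q\|_{L^2}^2 \leq M$ together with $\|q_{>N}\|_{L^2}^2 \leq \epsilon^2 M$ and the constraint on $\kappa$, all of which hold uniformly for $q \in Q$. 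Hence, taking the supremum over $q \in Q$ preserves the estimate and gives \eqref{No high winding-UNI}.

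There is no real obstacle here: the corollary is genuinely a matter of observing that the estimate in Proposition~\ref{p:Winding}(i) is uniform in any class of potentials sharing the same $L^2$ bound and the same quantitative high-frequency decay. The work has already been done in establishing that proposition; this corollary merely packages it in the language of equicontinuous families, which is the form needed in the sequel.
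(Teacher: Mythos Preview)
Your proposal is correct and matches the paper's approach exactly: the paper presents this corollary as an immediate consequence of Proposition~\ref{p:Winding}(i), without giving a separate proof. Your observation that \eqref{HF-UNI} is precisely the hypothesis \eqref{HF} holding uniformly over $Q$, together with the $M$-only dependence of the implicit constant, is all that is needed.
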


In our proof of Theorem~\ref{t:Main}, we will employ the estimates of Proposition~\ref{p:Winding} in conjunction with the following factorization property:

\begin{prop}[Factorization]\label{p:Factor}
Let \(0<\epsilon\leq\frac14\) be a dyadic number and \(q\in B_M\).  If \(N\in 2^\Z\) is such that
\begin{equation}\label{Gap}
\| q_{\epsilon^3 N<\cdot\leq \frac N{\epsilon^3}}\|_{L^2}\leq \epsilon\|q\|_{L^2},
\end{equation}
then
\begin{equation}\label{Factor}
\Bigl|a(i\kappa;q) - a(i\kappa;q_{\leq N})\,a(i\kappa;q_{>N})\Bigr|\lesssim_M\epsilon^2 \quad\text{uniformly for $\kappa>0$}.
\end{equation}
\end{prop}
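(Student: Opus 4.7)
The plan is to exploit the multiplicativity of the perturbation determinant \eqref{det prod}. Set $A_L = k\Lambda_{\leq N}\Gamma_{\leq N}$, $A_H = k\Lambda_{>N}\Gamma_{>N}$, $A_{LH} = k\Lambda_{\leq N}\Gamma_{>N}$, and $A_{HL} = k\Lambda_{>N}\Gamma_{\leq N}$, so that $k\Lambda\Gamma = A_L + A_H + A_{LH} + A_{HL}$. By \eqref{det prod} we have $a(i\kappa;q_{\leq N})\,a(i\kappa;q_{>N}) = \det[1 - A_L - A_H + A_L A_H]$, and by \eqref{I2} the relevant $\I_1$ norms are controlled by a function of $M$. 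Applying \eqref{det diff 2}, the claim reduces to proving
\[
\bigl|\tr E\bigr| + \|E\|_{\I_2} \lesssim_M \eps^2, \qquad E := A_{LH} + A_{HL} + A_L A_H.
\]

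The cross traces $\tr(A_{LH})$, $\tr(A_{HL})$ are handled using \eqref{trace}: $\tr(A_{LH}) = \int_\R\frac{ik}{2k+\xi}\widehat{q_{\leq N}}(\xi)\overline{\widehat{q_{>N}}(\xi)}\,d\xi$. The integrand is supported in the Fourier overlap $|\xi|\in[N,2N]$, which lies inside the middle band where \eqref{Gap} gives $\|\hat q\|_{L^2(|\xi|\in[N,2N])}\lesssim \eps\sqrt M$; since $|k|/|2k+\xi|\leq 1$ for $k=i\kappa$, Cauchy--Schwarz gives $|\tr A_{LH}|+|\tr A_{HL}|\lesssim \eps^2 M$. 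For $\tr(A_L A_H) = k^2\tr(\Gamma_{>N}\Lambda_{\leq N}\cdot\Gamma_{\leq N}\Lambda_{>N})$, cyclic invariance and Cauchy--Schwarz in the Hilbert--Schmidt inner product give the bound $\bigl(\kappa\|\Gamma_{>N}\Lambda_{\leq N}\|_{\I_2}\bigr)\bigl(\kappa\|\Gamma_{\leq N}\Lambda_{>N}\|_{\I_2}\bigr)$, a product of two cross-like Hilbert--Schmidt norms. Similarly, $\|A_L A_H\|_{\I_2}$ is controlled by placing operator norms (bounded by \eqref{OP-BASIC}) on the two outermost factors and a cross-like Hilbert--Schmidt norm on the middle piece $\kappa\|\Gamma_{\leq N}\Lambda_{>N}\|_{\I_2}$.

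Everything thus reduces to the master estimate
\[
\kappa\bigl\|\Lambda_{\leq N}\Gamma_{>N}\bigr\|_{\I_2} + \kappa\bigl\|\Gamma_{\leq N}\Lambda_{>N}\bigr\|_{\I_2} \lesssim_M \eps^2 \quad\text{uniformly in }\kappa>0,
\]
which I expect to be the main technical obstacle. I would prove it by splitting $q_{\leq N} = q_L + q_{M^-}$ with $q_L = q_{\leq\eps^3 N}$, and $q_{>N} = q_{M^+} + q_H$ with $q_H = q_{>N/\eps^3}$, where by \eqref{Gap} the middle pieces satisfy $\|q_{M^\pm}\|_{L^2}\leq \eps\sqrt M$. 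The resulting four-term expansion has a double-middle contribution that is immediate from \eqref{OP-BASIC} applied twice: $\kappa\|\Lambda_{M^-}\Gamma_{M^+}\|_{\I_2}\leq\bigl(\sqrt\kappa\|\Lambda_{M^-}\|_{\I_2}\bigr)\bigl(\sqrt\kappa\|\Gamma_{M^+}\|_{\I_2}\bigr)\lesssim \eps^2 M$. The delicate terms are the pure gap $\kappa\|\Lambda_L\Gamma_H\|_{\I_2}$ and the two mixed gap--middle terms, for which one must interpolate among \eqref{OP-BASIC}, the Bernstein-type bound \eqref{Op-Low}, and the high-frequency Hilbert--Schmidt bound \eqref{I2-Hi} uniformly across the three regimes $\kappa\leq\eps^3 N$, $\eps^3 N<\kappa\leq N/\eps^3$, and $\kappa> N/\eps^3$. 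The rule of thumb is to use \eqref{Op-Low} on the low-frequency factor whenever $\kappa\gtrsim\eps^3 N$, \eqref{I2-Hi} on the high-frequency factor whenever $\kappa\lesssim N/\eps^3$, and \eqref{OP-BASIC} to fill in the complementary cases. In each regime the powers of $\eps^3 = N_-/N = N/N_+$ from the frequency gap combine to give each term a bound no worse than $\eps^{5/2}\sqrt{\log(1/\eps)}\,M$; since $\eps\leq\tfrac14$ implies $\eps^{1/2}\sqrt{\log(1/\eps)}\lesssim 1$, this is dominated by $\eps^2 M$, yielding the master estimate and completing the proof.
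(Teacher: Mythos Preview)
Your proposal is correct and follows essentially the same route as the paper, with one organizational difference worth noting.  The paper inserts the intermediate quantity $\det[1 - i\kappa\Lambda_{\leq N}\Gamma_{\leq N} - i\kappa\Lambda_{>N}\Gamma_{>N}]$ and performs two comparisons: one against $a(i\kappa;q_{\leq N})\,a(i\kappa;q_{>N})$ via \eqref{det diff 1} (estimating $\|A_LA_H\|_{\I_1}$), and one against $a(i\kappa;q)$ via \eqref{det diff 2}.  You collapse this into a single application of \eqref{det diff 2} with $E=A_{LH}+A_{HL}+A_LA_H$, which is slightly more streamlined.  The paper's key estimate is stated in the product-of-norms form
\[
\kappa\|\Lambda_{>N}\|_{\I_2}\|\Gamma_{\leq N}\|_{\op}+\kappa\|\Gamma_{>N}\|_{\I_2}\|\Lambda_{\leq N}\|_{\op}\lesssim\epsilon^2 M,
\]
whereas your master estimate bounds the $\I_2$ norm of the products $\Lambda_{\leq N}\Gamma_{>N}$ and $\Gamma_{\leq N}\Lambda_{>N}$; the latter follows from the former, and both are established by the same splitting into low/middle/high pieces using \eqref{Op-Low}, \eqref{I2-Hi}, and the gap hypothesis \eqref{Gap}.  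Your case analysis across the three $\kappa$-regimes and the resulting $\epsilon^{5/2}\sqrt{\log(1/\epsilon)}$ bound are consistent with what those lemmas give.
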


\begin{proof} Let \(k = i\kappa\) for \(\kappa>0\).

If \(0<\kappa\leq N\), we may apply \eqref{OP-BASIC} and \eqref{I2-Hi} with the hypothesis \eqref{Gap} to estimate
\begin{align*}
\sqrt{\kappa}\|\Lambda_{>N}\|_{\I_2} &\leq \sqrt{\kappa}\|\Lambda_{N<\cdot\leq \frac N{\epsilon^3}}\|_{\I_2} + \sqrt{\kappa}\|\Lambda_{> \frac N{\epsilon^3}}\|_{\I_2}\\
&\lesssim \|q_{\epsilon^3N<\cdot\leq \frac N{\epsilon^3}}\|_{L^2} + \sqrt{\tfrac{\kappa\epsilon^3}N\log\bigl(4 + \tfrac{N^2}{\epsilon^6\kappa^2}\bigr)}\|q\|_{L^2}\\
&\lesssim \epsilon\sqrt M.
\end{align*}
Conversely, if \(\kappa\geq N\) then from \eqref{OP-BASIC}, \eqref{Op-Low}, and \eqref{Gap} we have
\begin{align*}
\sqrt{\kappa}\|\Gamma_{\leq N}\|_{\op} &\leq \sqrt{\kappa}\|\Gamma_{\epsilon^3 N<\cdot\leq N}\|_{\I_2}  + \sqrt{\kappa}\|\Gamma_{\leq \epsilon^3N}\|_{\op}\\
&\lesssim \|q_{\epsilon^3N<\cdot\leq \frac N{\epsilon^3}}\|_{L^2} + \sqrt{\tfrac{\epsilon^3N}{\kappa}}\|q\|_{L^2}\\
&\lesssim \epsilon\sqrt M.
\end{align*}
Combining these bounds, we deduce that
\begin{equation}\label{Gapped}
\kappa\|\Lambda_{>N}\|_{\I_2}\|\Gamma_{\leq N}\|_{\op} +\kappa\|\Gamma_{>N}\|_{\I_2}\|\Lambda_{\leq N}\|_{\op} \lesssim  \epsilon^2 M \quad\text{uniformly for \(\kappa>0\).}
\end{equation}

From \eqref{trace}, we get
\[
\tr\bigl\{i\kappa \Lambda_{\leq N}\Gamma_{>N}\bigr\} = \int_\R \tfrac{i\kappa}{2\kappa - i\xi} \overline{\hat q_{>N}(\xi)} \hat q_{\leq N}(\xi)\,d\xi.
\]
Frequency support considerations then allow us to use \eqref{Gap} to bound
\begin{equation}\label{Gappy}
\bigl|\tr\bigl\{i\kappa \Lambda_{\leq N}\Gamma_{>N}\bigr\}\bigr|\lesssim \|q_{\epsilon^3N<\cdot\leq N}\|_{L^2}\|q_{N<\cdot\leq \frac N{\epsilon^3}}\|_{L^2}\lesssim \epsilon^2 M.
\end{equation}

We now apply \eqref{det prod}, \eqref{det diff 1}, and \eqref{Gapped} to bound
\begin{align*}
&\Bigl|a(i\kappa;q_{\leq N})\,a(i\kappa;q_{>N}) - \det\bigl[1 - i\kappa\Lambda_{\leq N}\Gamma_{\leq N} - i\kappa\Lambda_{>N}\Gamma_{>N}\bigr]\Bigr|\\
&\qquad\lesssim \kappa^2\|\Lambda_{\leq N}\|_{\op}\|\Gamma_{\leq N}\|_{\op}\|\Lambda_{>N}\|_{\I_2}\|\Gamma_{>N}\|_{\I_2}\exp\bigl(CM^2\bigr)\\
&\qquad\lesssim \epsilon^4M^2\exp\bigl(CM^2\bigr),
\end{align*}
for some \(C>0\). Second, we use \eqref{det diff 2}, \eqref{Gapped}, and \eqref{Gappy} to bound
\begin{align*}
&\Bigl|a(i\kappa;q) - \det\bigl[1 - i\kappa\Lambda_{\leq N}\Gamma_{\leq N} - i\kappa\Lambda_{>N}\Gamma_{>N}\bigr]\Bigr|\\
&\qquad \lesssim \Biggl\{\bigl|\tr\bigl\{i\kappa\Lambda_{\leq N}\Gamma_{>N}\}\bigr| + \bigl|\tr\bigl\{i\kappa\Lambda_{> N}\Gamma_{\leq N}\}\bigr| + \kappa\|\Lambda_{\leq N}\|_{\op}\|\Gamma_{>N}\|_{\I_2}\\
&\qquad\qquad + \kappa\|\Lambda_{>N}\|_{\I_2}\|\Gamma_{\leq N}\|_{\op}\Biggr\}\exp\bigl(CM^2\bigr)\\
&\qquad \lesssim \epsilon^2 M\exp\bigl(CM^2\bigr),
\end{align*}
which completes the proof of \eqref{Factor}.
\end{proof}

\section{The reciprocal of the transmission coefficient}\label{S:3}

In this section, we analyze $\ta(\lambda;q)$ which is defined through the asymptotic behavior (as $x\to\pm\infty$) of solutions to the Kaup--Newell system
\begin{equation}\label{KN} 
\psi' = -i\sigma_3\lambda^2\psi + \lambda\begin{bmatrix}0&q\\-\bar q&0\end{bmatrix}\psi.
\end{equation}

We begin by discussing the key identities for $q\in \Test(\R)$.  In this case, it is trivial to see that there are matrix solutions $\Psi^\pm$ to \eqref{KN} satisfying
\begin{equation}\label{JOSTBC}
\Psi^\pm(x;\lambda) e^{i\lambda^2x\sigma_3}  = \Id \quad \text{for $\pm x$ sufficiently large}.
\end{equation}
Indeed, equality holds as soon as $x$ is large enough to lie beyond the support of $q$.  Moreover, for each \(x\in \R\), the map \(\lambda\mapsto \Psi^\pm(x;\lambda)\) is entire and \(\det\Psi^\pm = 1\).

Note that the solution $\Psi$ appearing in the introduction is precisely $\Psi^-$.  Moreover, when $q\in\Test(\R)$, it is evident that limit \eqref{S defn} exits and defines a matrix of determinant one.  To verify \eqref{S structure}, we need to employ certain symmetries of \eqref{KN}.

If \(\psi\) is a solution of \eqref{KN} then \(\sigma_3\psi\) is a solution with \(\lambda\) replaced by \(-\lambda\), and \(\sigma_1\sigma_3\bar\psi\) is also a solution with \(\lambda\) replaced by \(\bar \lambda\).  (Here we use the notations \eqref{Pauli}.)  By comparing asymptotic behavior, we deduce that
\begin{subequations}\label{SIMS}
\begin{align}
\Psi^\pm(x;\lambda) &= \sigma_3\Psi^\pm(x;-\lambda) \sigma_3,\\
\Psi^\pm(x;\lambda) &= \sigma_1\sigma_3\overline{\Psi^\pm(x;\bar\lambda)} \sigma_3\sigma_1.
\end{align}
\end{subequations}
The assertions \eqref{S structure} follow from this and the fact that $\det S(\lambda;q) =1$.

Moving forward, we would like to focus on the particular matrix element $\Psi^-_{11}$ of the Jost solution $\Psi^-$, which we will characterize as the solution of certain integral equations.  Our ultimate aim is to provide useful means of analyzing the key quantity $\ta(\lambda;q)$.  Initially, this will just be for $q\in\Test(\R)$.

The role of $\ta$ and $\tb$ as scattering coefficients is apparent from the relation
\begin{equation}\label{scat coeff}
 \Psi^-_{11}(x) = \ta e^{-i\lambda^2 x} + \tb e^{i\lambda^2 x} 
\end{equation}
for $x$ to the right of the support of $q$. Concretely, $1/\ta$ represents the amplitude of the transmitted wave, while $\tb/\ta$ represents the amplitude of the reflected wave. 

Our first approach is via a Fredholm integral equation.  From \eqref{scat coeff} and \eqref{JOSTBC}, we know that
\begin{align*}
\Psi^-_{21}, \qquad \bar q\,\Psi^-_{11}, \qtq{and}  \Psi^-_{11}-\ta(\lambda)e^{-i\lambda^2 x}
\end{align*}
are all square-integrable when $\lambda^2\in\C^+$; moreover, they satisfy
\begin{align*}
(-i\lambda^2+\partial) \Psi^-_{21} = - \lambda \bar q \Psi^-_{11} \qtq{and}  (-i\lambda^2-\partial)[\Psi^-_{11}-\ta(\lambda)e^{-i\lambda^2 x}] = - \lambda q\Psi^-_{21}.
\end{align*}
Thus, for $\lambda^2\in\C^+$ we have
\begin{align}\label{2 JP}
\Psi^-_{11}(x)-\ta(\lambda)e^{-i\lambda^2 x} = \lambda^2 (-i\lambda^2-\partial)^{-1} q (-i\lambda^2+\partial)^{-1} \bar q \,\Psi^-_{11}.
\end{align}
This representation will be used to prove the Jost--Pais identity \eqref{E:JP I} in Lemma~\ref{l:JP}.

A second approach is to represent $\Psi^-_{11}$ as the solution of the Volterra equation
\begin{equation}\label{p11}
\Psi_{11}^-(x) = e^{-i\lambda^2 x} - \lambda^2\int_{-\infty}^x\int_{-\infty}^s e^{-i\lambda^2(x-2s+y)}q(s) \bar q(y)\Psi_{11}^-(y)\,dy\,ds,
\end{equation}
which follows simply from the initial conditions: $\Psi^-_{11}(x)=e^{-i\lambda^2 x}$ and $\Psi^-_{12}(x)=0$ for every $x$ to the left of the support of $q\in\Test(\R)$.

While \eqref{p11} is perfectly satisfactory for constructing $\ta(\lambda;q)$ via
\begin{equation}\label{p11a}
\ta(\lambda;q) = \lim_{x\to\infty} e^{i\lambda^2 x} \Psi_{11}^-(x) = 1 - \lambda^2\int_{-\infty}^\infty \int_{-\infty}^s e^{i\lambda^2(2s-y)}q(s) \bar q(y)\Psi_{11}^-(y)\,dy\,ds
\end{equation}
and for verifying most of the properties we need, it is not well suited to describing the large-$\lambda$ asymptotics.  In order to give a similar representation that will serve all our needs, we adopt a change of unknowns introduced already in \cite{MR464963}: Setting
\begin{align*}
\gamma(x) = e^{\frac12im(x) \sigma_3}\begin{bmatrix}1&0\\-\frac12\bar q(x)&i\lambda\end{bmatrix}
	\begin{bmatrix}\Psi_{11}^-(x)\\\Psi_{21}^-(x)\end{bmatrix}
	\qtq{with}m(x) = \int^x_{-\infty} |q(y)|^2\,dy,
\end{align*}
we find that
\begin{align}\label{E:gamma'}
\gamma' = -i\sigma_3\lambda^2\gamma - i \begin{bmatrix}0&q e^{im}\\-re^{-im}&0\end{bmatrix}\gamma \qtq{where}r = \tfrac i2\bar q' + \tfrac14|q|^2\bar q.
\end{align}

\begin{rem}\label{R:2} In deriving the equation for $\gamma$, we used the following general result:
\begin{equation*}
P(\p_x + i\lambda^2\sigma_3 - Q) = (\p_x + i\lambda^2\sigma_3 - \widetilde Q)P
	\ \iff \ \widetilde QP = P' + i\lambda^2[\sigma_3,P] + PQ.
\end{equation*}
\end{rem}

Using the first component of the vector $\gamma$, we build $\phi(x) := e^{i\lambda^2 x}\gamma_1(x)$.  For this function, we have the following analogues of \eqref{p11} and~\eqref{p11a}:
\begin{gather}\label{g11}
\phi(x) = 1 + \int_{-\infty}^x\int_{-\infty}^s e^{2i\lambda^2(s-y)+im(s)-im(y)}q(s) r(y)\phi(y)\,dy\,ds, \\
	\label{g11a}
e^{\frac i2\|q\|_{L^2}^2} \ta(\lambda;q) = \lim_{x\to\infty} \phi(x).
\end{gather}
This is the representation that will allow us to prove the estimates we need.  Notice that in both \eqref{p11} and \eqref{g11}, the spectral parameter $\lambda$ only appears squared.  Thus, it is natural to adopt $k=\lambda^2$ with $\Im k\geq 0$ as our primary parameter (as in Section~\ref{S:2}); by $\sqrt{k}$, we will always mean the value in the first quadrant.

We will need two further notations:  We write $X$ for the Banach space of holomorphic functions \(f\colon\C^+\to \C\) such that $f$ and $f'$ extend continuously to $\p\C^+$ and for which
\begin{align}\label{X defn}
\|f\|_X = \sup_{k\in \overline{\C^+}}\Bigl(\<k\>|f(k)| + \<k\>^2|f'(k)|\Bigr)<\infty.
\end{align}
The space \(X\) is readily seen to be a closed subspace of the Hardy space \(H^\infty(\C^+)\). 
We also recall the notation $H^{4,4}(\R)$ denoting the completion of $\Schwartz$ in the norm
$$
\|q\|_{H^{4,4}}^2 = \|q^{(4)}\|_{L^2}^2 + \|\<x\>q'''\|_{L^2}^2 + \|\<x\>^{2}q''\|_{L^2}^2 + \|\<x\>^{3}q'\|_{L^2}^2 + \|\<x\>^{4}q\|_{L^2}^2.
$$

\begin{lem}\label{l:Jost-AUX}
The function
$
q\mapsto e^{\frac i2\|q\|_{L^2}^2} \ta(\sqrt k;q) - 1
$
extends from $q\in C^\infty_c(\R)$ to a real-analytic $X$-valued function of $q\in H^{4,4}(\R)$.
\end{lem}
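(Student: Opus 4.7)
The plan is to analyze directly the Volterra integral equation \eqref{g11} for $\phi(\cdot;k)$ with $k=\lambda^2\in\overline{\C^+}$. Writing \eqref{g11} as $\phi=1+T_k\phi$, the kernel of $T_k$ has modulus $\leq |q(s)||r(y)|\mathbf{1}_{y<s}$ uniformly in $k$. Since $q\in H^{4,4}(\R)$ embeds into $L^1\cap L^\infty$ with $q'\in L^1\cap L^\infty$, the function $r=\tfrac{i}{2}\bar q'+\tfrac14|q|^2\bar q$ also lies in $L^1\cap L^\infty$, and the Volterra structure yields $\|T_k^n\|_{L^\infty\to L^\infty}\leq (\|q\|_{L^1}\|r\|_{L^1})^n/n!$. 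The Neumann series $\phi(x;k)=\sum_{n\geq 0}T_k^n\mathbf{1}(x)$ therefore converges uniformly on $\R\times\overline{\C^+}$, defining $\phi$ jointly continuously there, holomorphically in $k\in\C^+$, and real-analytically in $q$, since each iterate $T_k^n\mathbf{1}$ is a real polynomial in $(q,\bar q,q',\bar q')$ (with $e^{im}$ expanded as a power series in the quadratic form $m=\int|q|^2$).

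Passing to $x=+\infty$ via \eqref{g11a} and substituting $u=s-y$ yields
\begin{equation*}
g(k) := e^{\frac{i}{2}\|q\|_{L^2}^2}\ta(\sqrt k;q)-1 = \int_0^\infty e^{2iku}\,F(u;k)\,du,
\end{equation*}
where $F(u;k)=\int_\R e^{i[m(y+u)-m(y)]}q(y+u)r(y)\phi(y;k)\,dy$. The crude estimate $|g(k)|\leq\|q\|_{L^1}\|r\|_{L^1}\|\phi\|_\infty$ controls $g$ for bounded $|k|$, and for large $|k|$ one integration by parts in $u$ gives
\begin{equation*}
g(k) = -\frac{F(0;k)}{2ik}+\frac{1}{2ik}\int_0^\infty e^{2iku}\,\partial_u F(u;k)\,du,
\end{equation*}
provided $F\to 0$ as $u\to\infty$ (by pointwise decay of $q(y+u)$ and dominated convergence) and $\partial_u F\in L^1_u$ (guaranteed by bounds like $\|q'\|_{L^1}\|r\|_{L^1}$). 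Together these yield $|g(k)|\lesssim\langle k\rangle^{-1}$. To obtain $|g'(k)|\lesssim\langle k\rangle^{-2}$, I perform a second $u$-IBP and use the differentiated Volterra equation $\partial_k\phi=(\partial_k T_k)\phi+T_k\partial_k\phi$, whose right-hand side brings an extra factor $s-y$ in the kernel of $\partial_k T_k$; the requisite $L^1_u$-bounds on $\partial_u^2 F$, $\partial_k F$, and $\partial_u\partial_k F$ rely on second $q$-derivatives and weighted spatial estimates, all of which $H^{4,4}(\R)$ provides.

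Real-analyticity of $q\mapsto g$ from $H^{4,4}(\R)$ to $X$ then follows by applying the $X$-norm bounds termwise to the Neumann series for $\phi$: each iterate $T_k^n\mathbf{1}$ combined with the outer integrations in $g$ and $g'$ is a continuous multilinear form in $(q,\bar q,q',\bar q')$ (after further expansion of $e^{im}$ as a power series in $m$), and the Weierstrass $M$-test yields convergence in $X$ uniformly on bounded balls of $H^{4,4}(\R)$. Density of $C^\infty_c(\R)$ in $H^{4,4}(\R)$ then supplies the advertised extension from the Schwartz-class setup. The principal technical obstacle is establishing the $L^1_u$-integrability of $\partial_u F$, $\partial_u^2 F$, $\partial_k F$, and $\partial_u\partial_k F$ with constants controlled explicitly by $\|q\|_{H^{4,4}}$: since $q(y+u)$ is paired in $F$ with the unshifted $r(y)$, naive $L^1_{y,u}$ bounds produce $\|q\|_{L^1}\|r\|_{L^1}$-type constants with no decay in $u$, so one must split the $y$-integral into $|y|\leq u/2$ and $|y|>u/2$, in each region using the weighted $L^2$ structure of $H^{4,4}$ to extract polynomial $u$-decay via Cauchy--Schwarz. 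An analogous weighted bound handles the extra factor $s-y$ in the kernel of $\partial_k T_k$, ensuring that $\partial_k\phi$ is controlled in $L^\infty_y$ by $\|q\|_{H^{4,4}}$.
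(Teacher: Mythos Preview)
Your approach follows the same Volterra-iteration route as the paper, and the qualitative structure (Neumann series, continuity, analyticity in $q$, the substitution $u=s-y$ combined with integration by parts for decay in $k$) is sound.  However, there is a genuine gap in the bound $|g'(k)|\lesssim\langle k\rangle^{-2}$.

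Write $g'(k)=\int_0^\infty e^{2iku}\bigl[2iuF(u;k)+\partial_kF(u;k)\bigr]\,du$.  Your two integrations by parts in $u$ produce a boundary term $-\partial_kF(0;k)/(2ik)$ from the first IBP, and since $\partial_kF(0;k)=\int q(y)r(y)\,\partial_k\phi(y;k)\,dy$, this is $O(\langle k\rangle^{-1})$ \emph{only} if you know $\|\partial_k\phi(\cdot;k)\|_{L^\infty}=O(\langle k\rangle^{-1})$.  But your Volterra bound $\|T_k^n\|\leq(\|q\|_{L^1}\|r\|_{L^1})^n/n!$ carries no decay in $k$, so it yields only $\partial_k\phi=O(1)$, and hence only $|g'(k)|=O(\langle k\rangle^{-1})$.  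The $u$-IBP device works at $x=+\infty$ because the $s$-integral there is unbounded; at finite $x$ the corresponding integration by parts in $s$ leaves a boundary contribution at $s=x$, so it does not directly give decay for $\partial_k\phi(x;k)$.

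The fix is to integrate by parts in $s$ inside the kernel $A(x,y;k)=\int_y^x e^{2ik(s-y)+im(s)-im(y)}q(s)\,ds\cdot r(y)$ itself.  The boundary term at $s=x$ is controlled by $\|q\|_{L^\infty}$ (for $\partial_kA$, by $\|\langle x\rangle q\|_{L^\infty}$, which $H^{4,4}$ provides), and this yields $|A(x,y;k)|\lesssim\langle k\rangle^{-1}[\|q\|_{L^\infty}+\|(e^{im}q)'\|_{L^1}]\,|r(y)|$.  From this one obtains $\|T_k\|\lesssim\langle k\rangle^{-1}$ and $\|\partial_k\phi\|_{L^\infty}\lesssim\langle k\rangle^{-1}$ uniformly in $x$, after which your remaining argument (or the direct operator estimate) closes.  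This is precisely the step the paper singles out: the $n\geq 2$ terms of the series for $g'$ are $O(\langle k\rangle^{-2})$ once $\|T_k\|\lesssim\langle k\rangle^{-1}$, and the $n=1$ term $\lim_{x\to\infty}\tfrac{d}{dk}[A1](x)$ is handled by a separate double integration by parts (first in $y$, then in $s$).
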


\begin{proof}
First we construct the solution $\phi(x;k)$ by interpreting \eqref{g11} as the Volterra integral equation
\begin{align}\label{g11'}
\phi = 1+ A \phi \qtq{with} [A\phi](x) = \int_{-\infty}^x \! A(x,y;k) \phi(y)\,dy
\end{align}
and integral kernel
\begin{align*}
A(x,y;k) = \int_y^x e^{2ik(s-y)+im(s)-im(y)}q(s) r(y)\,ds.
\end{align*}

We will solve \eqref{g11'} by iteration, working in the Banach space $\mathcal C$ comprised of those bounded continuous functions $\varphi:\R\to\C$ that have limits as $x\to\pm\infty$, which we endow with the supremum norm  (or equivalently, in the Banach space of continuous functions on the two-point compactification $[-\infty,\infty]$).  
This small wrinkle is helpful in light of \eqref{g11a}.

Estimating the kernel either directly or after integrating by parts in $s$, we find
\begin{align*}
|A(x,y;k)| \lesssim \|q\|_{L^1} |r(y)| \qtq{and} |A(x,y;k)| \lesssim |k|^{-1} \Bigl[ \|q\|_{L^\infty} + \| (e^{im/2}q)' \|_{L^1}\Bigr] |r(y)|
\end{align*}
uniformly for $\Im k \geq 0$.  This shows $\|A\varphi\|_{L^\infty}\lesssim \langle k\rangle^{-1} \|\varphi\|_{L^\infty}$.  Additionally, for $x<x'$ we have
\begin{align*}
\int |A(x,y;k)- A(x',y;k)| \,dy \leq \|r\|_{L^1} \int_x^{x'}  |q(s)|\,ds,
\end{align*}
which shows that in fact $A:\mathcal C\to\mathcal C$.  Iterating the bounds above shows
$$
\| A^n \|_{\mathcal C\to\mathcal C} \leq  \frac1{n!} \biggl[ \frac{C\bigl(\|q\|_{H^{4,4}}\bigr)}{\langle k\rangle}\biggr]^{n} ,
$$
which in turn guarantees that $\phi$ can be constructed as $\phi = \sum_{n=0}^\infty A^n 1$.

This series construction shows that, when viewed as a $\mathcal C$-valued function, $\phi(x;k,q)$ has real-analytic dependence on $q\in H^{4,4}(\R)$, is a  continuous function of $k$ in the closed upper half-plane, and is a holomorphic function of $k\in \C^+$.  Our estimates on $A$ also show the quantitative bound
$$
\bigl|e^{\frac i2\|q\|_{L^2}^2} \ta(\sqrt k;q) - 1 \bigr| + \sup_x |\phi(x;k,q) - 1| \lesssim \langle k \rangle^{-1},
$$
uniformly for $k\in\overline{ \C^+}$ and $q$ in bounded subsets of $H^{4,4}(\R)$.

Mimicking the arguments above, we first see that
$$
\bigl|\tfrac{d\ }{dk}A(x,y;k)\bigr| \lesssim \langle k\rangle^{-1} C\bigl( \|q\|_{H^{4,4}} \bigr) \langle y \rangle |r(y)|
$$
and then that the $k$-derivative of $\ta(\sqrt k;q)$ is bounded and continuous on the closed upper half-plane.   This does not quite suffice to prove the $\langle k\rangle^{-2}$ decay we require; the sole obstruction is the term
\begin{align*}
\lim_{x\to\infty} \tfrac{d}{dk} [A 1](x) = \int_{-\infty}^\infty\int_y^\infty 2i(s-y) e^{2ik(s-y)+im(s)-im(y)}q(s) r(y)\,ds\,dy,
\end{align*}
which the preceding arguments only show to be $O(\langle k\rangle^{-1})$.  The key to handling this term is to first integrate by parts in $y$:
\begin{align*}
2ik \lim_{x\to\infty} \tfrac{d}{dk} [A 1](x)
	&= - \int_{-\infty}^\infty\int_y^\infty 2i(s-y) \bigl[\tfrac{\partial\ }{\partial y} e^{2ik(s-y)}\bigr] e^{im(s)-im(y)}q(s) r(y)\,ds\,dy \\
&= \int_{-\infty}^\infty\int_{-\infty}^s e^{2ik(s-y)} \tfrac{\partial\ }{\partial y} \bigl[2i(s-y) e^{im(s)-im(y)}q(s) r(y) \bigr] \,dy\,ds
\end{align*}
and only then integrate by parts in $s$.  In this way we obtain
\begin{align*}
\bigl| (2ik)^2 \lim_{x\to\infty} \tfrac{d}{dk} [A 1](x) \bigr|
	&\lesssim  \iint \Bigl| \tfrac{\partial^2\ }{\partial s\partial y} \bigl[2i(s-y) e^{im(s)-im(y)}q(s) r(y) \bigr] \Bigr|\,dy\,ds \\
	&\qquad	+ \int |q(s)r(s)|\,ds,
\end{align*}
which is easily estimated in terms of $\| q \|_{H^{4,4}}$.
\end{proof}

We are now ready to prove the Jost--Pais identity stated earlier as \eqref{E:JP I}.  Our argument is modeled on \cite[Lemma~2.8]{MR2310217}, which retains much of the spirit of the original \cite{MR0044404}.  For a very different approach to such results, see \cite[Proposition~5.7]{MR2154153}.

\begin{lem}[A Jost--Pais identity]\label{l:JP}
For \(q\in \Schwartz\) and \(k\in \C^+\) we have
\begin{equation}\label{JP}
a(k;q) = \ta(\sqrt k;q).
\end{equation}
\end{lem}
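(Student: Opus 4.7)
The plan is to reduce to $q\in C_c^\infty(\R)$ by density, and then to match term-by-term the Taylor expansions of both sides under the rescaling $q\mapsto tq$.

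For the density reduction: by \eqref{I2} the map $q\mapsto k\Lambda(k;q)\Gamma(k;q)$ is continuous from $L^2(\R)$ into $\I_1$, so by \eqref{det diff 1} the map $q\mapsto a(k;q)$ is continuous from $L^2(\R)$ to $\C$ for each fixed $k\in\C^+$. Lemma~\ref{l:Jost-AUX} gives continuous (in fact real-analytic) dependence of $q\mapsto\ta(\sqrt k;q)$ from $H^{4,4}(\R)$ to $\C$. As $C_c^\infty(\R)$ is dense in both spaces, it suffices to prove \eqref{JP} for $q\in C_c^\infty(\R)$.

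Fix such $q$ and $k\in\C^+$. Under the rescaling $q\mapsto tq$ one has $\Lambda(k;tq)\Gamma(k;tq)=|t|^2\Lambda(k;q)\Gamma(k;q)$, while each iterate in the Neumann series for the Volterra equation \eqref{p11} carries equal powers of $q$ and $\bar q$; hence both $a(k;tq)$ and $\ta(\sqrt k;tq)$ depend on $t$ only through $s:=|t|^2$. Writing $F(s):=a(k;tq)$ and $G(s):=\ta(\sqrt k;tq)$, both extend to entire functions of $s\in\C$ via their respective series, with $F(0)=G(0)=1$. It therefore suffices to show $F_n=G_n$ for every Taylor coefficient at $s=0$, $n\geq 1$. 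Iterating \eqref{p11} $n$ times and passing to $x\to\infty$ as in \eqref{p11a} yields the explicit ordered-simplex formula
\[
G_n=(-k)^n\!\!\int_{v_1<u_1<\cdots<v_n<u_n}\!\! e^{2ik\sum_j(u_j-v_j)}\prod_{j=1}^n q(u_j)\bar q(v_j)\,du_j\,dv_j.
\]
On the other side, \eqref{E:alt a defn} gives $F(s)=\det(1-sK)$ with $K:=k(-ik-\p)^{-1}q(-ik+\p)^{-1}\bar q$, which is trace class on $L^2(\R)$ for $q\in C_c^\infty$ and has the explicit kernel
\[
K(x,z)=k\,\bar q(z)\int_{\max(x,z)}^\infty e^{ik(2y-x-z)}q(y)\,dy.
\]
The Plemelj--Smithies formula then writes $F_n=\tfrac{(-1)^n}{n!}\int\det[K(x_i,x_j)]_{i,j=1}^n\,dx_1\cdots dx_n$.

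The main obstacle is this combinatorial identification $F_n=G_n$: one must expand the $n\times n$ determinant in $F_n$, unfold the $\max$-ordering in the $K$ kernel, and recognize the result as $n!$ copies of the ordered Volterra simplex integral defining $G_n$. I would carry this out essentially as in \cite[Lemma~2.8]{MR2310217}. The structural reason the identification works is that $K$ factors as $k\cdot B\cdot C$, a product of two Volterra operators with complementary triangular kernels --- the algebraic fact that, for compactly supported $q$, forces the Fredholm determinant to coincide with the Jost-type quantity $\ta$.
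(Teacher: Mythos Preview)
Your reduction to $q\in C_c^\infty(\R)$ matches the paper's. Thereafter the arguments diverge: the paper does \emph{not} compare Taylor coefficients. Instead it uses the Fredholm integral equation \eqref{2 JP} to write $\Psi_{11}^-=(1+A)^{-1}\bigl[\ta\, e^{-ikx}\bigr]$ (at points where $a(k;q)\neq 0$), expresses $(1+A)^{-1}=1-B$ via the Fredholm cofactor expansion, and then exploits a row-dependence in the minors of $A$ to evaluate $e^{ikx}\!\int B(x,y)e^{-iky}\,dy$ exactly for $x$ to the left of $\supp q$. Matching this against the known boundary value $\Psi_{11}^-(x)=e^{-ikx}$ gives $a=\ta$ directly.

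Your route via $q\mapsto tq$ and Plemelj--Smithies is legitimate and more combinatorial. The step you leave as ``essentially as in \cite[Lemma~2.8]{MR2310217}'' can be made completely explicit: writing $H(x)=\int_x^\infty e^{2iks}q(s)\,ds$, the kernel factors as $K(x,z)=k\,\bar q(z)e^{-ik(x+z)}H(\max(x,z))$, so after symmetrizing to the simplex $x_1<\cdots<x_n$ one is reduced to the elementary row-reduction identity
\[
\det\bigl[H(x_{\max(i,j)})\bigr]_{i,j=1}^n=\prod_{j=1}^n\bigl[H(x_j)-H(x_{j+1})\bigr]\qquad(x_{n+1}:=+\infty),
\]
which immediately collapses $F_n$ to the ordered integral $G_n$. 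It is this semi-separable (rank-one on each triangle) structure of $K$, rather than the mere fact that $K=kBC$ with $B,C$ Volterra of opposite triangularity, that drives the identification. One caution: \cite[Lemma~2.8]{MR2310217} is precisely the model for the \emph{paper's} resolvent argument, so that citation supports the alternative proof more than your own tactic. The paper's method is more conceptual---it reconstructs the Jost solution---while yours is more self-contained once the determinant identity above is stated.
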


\begin{proof} Recall that $a(k;q)$ is a continuous function of $q\in L^2$ and a holomorphic function of $k\in\C^+$.  Similarly, by Lemma~\ref{l:Jost-AUX}, $\ta(\sqrt k;q)$ depends continuously on $q\in H^{4,4}(\R)$ and is also holomorphic for $k\in\C^+$.  Thus, it suffices to prove the identity \eqref{JP} only for \(q\in \Test(\R)\) and at those $k$ where $a(k;q)\neq 0$.

When $a(k;q)\neq 0$, the identity \eqref{E:alt a defn} shows that the linear equation \eqref{2 JP} is uniquely solvable.  We wish to write the solution via Fredholm expansion.  To this end, we first introduce the kernel
\begin{align}
A(x,y) = -k\int^{\infty}_{x\vee y} e^{-ik(x-2s+y)} q(s) \bar  q(y)\,ds
\end{align}
of the operator $A = - k  (-ik - \p)^{-1}q (-ik + \p)^{-1}\bar q$, as well as 
\[
A\begin{pmatrix}x_1,\dots,x_n\\y_1,\dots,y_n\end{pmatrix} =\det\begin{bmatrix}A(x_i,y_j)\end{bmatrix}_{1\leq i,j\leq n}.
\]

As discussed in \cite[Ch. 5]{MR2154153}, the determinant admits the expansion
\[
a(k;q) = \det(1+A) =  1 + \sum_{\ell=1}^\infty\int_{y_1<\dots<y_\ell}A\begin{pmatrix}y_1,\dots, y_\ell\\y_1,\dots,y_\ell\end{pmatrix}\,dy_1\cdots d y_\ell,
\]
and (when this is non-zero) we may write $(1+A)^{-1}=1-B$, where $B$ has kernel
\[
B(x,y) = \frac1{a(k;q)}\sum_{\ell=0}^\infty\int_{y_1<\dots<y_\ell}A\begin{pmatrix}x,y_1,\dots, y_\ell\\y,y_1,\dots,y_\ell\end{pmatrix}\,dy_1\cdots d y_\ell.
\]
Thus, we may write the solution of \eqref{2 JP} as
\begin{align}\label{Psi1B}
\Psi_{11}^-(x) = \ta(k;q)e^{-ikx} \biggl[ 1 - e^{ikx} \int  B(x,y) e^{-iky}\,dy\biggr].
\end{align}

Observe that if $x\leq y_1 \leq \min\{y,y_2,\ldots,y_\ell\}$, then 
\[
A\begin{pmatrix}x,y_1,\dots, y_\ell\\y,y_1,\dots,y_\ell\end{pmatrix} = 0,
\]
because it is the determinant of a matrix whose first two rows are linearly dependent.  On the other hand, if $x\leq y\leq \min\{y_1,y_2,\ldots,y_\ell\}$, then
\[
e^{ikx}  A\begin{pmatrix}x,y_1,\dots, y_\ell\\y,y_1,\dots,y_\ell\end{pmatrix} e^{-iky} = A\begin{pmatrix}y,y_1,\dots, y_\ell\\y,y_1,\dots,y_\ell\end{pmatrix}.
\]
Combining these observations, we find that for $x < \min\supp(q)$,
\begin{align*}
e^{ikx} \!\int \!  B(x,y) e^{-iky}\,dy &= \frac1{a(k;q)}\sum_{\ell=0}^\infty\int_{y<y_1<\dots<y_\ell}A\begin{pmatrix}y,y_1,\dots, y_\ell\\y,y_1,\dots,y_\ell\end{pmatrix}\,dy_1\cdots d y_\ell\,dy\\
&= \frac1{a(k;q)}\bigl( a(k;q) - 1\bigr).
\end{align*}

Returning to \eqref{Psi1B} and recalling that $\Psi_{11}^-(x) = e^{-ikx}$ for $x$ to the left of the support of $q$, we deduce that
\[
1 = \ta \bigl[1 - \tfrac1a(a-1)\bigr],
\]
and hence \(a(k) = \ta(\sqrt k)\), as required.
\end{proof}

Having proved \eqref{JP}, we no longer need these two separate notations.  We favor $a(k;q)$, even when $k\in \R$, despite the fact that all our analysis of these boundary values rests on the scattering interpretation. 

\begin{prop}\label{p:FACTS}
\textup{(i)} The map \(H^{4,4}\ni q\mapsto \bigl(a - e^{-\frac i2\|q\|_{L^2}^2}\bigr)\in X\) is locally Lipschitz.\\
\textup{(ii)} If \(q\in \Schwartz\) and $\theta\in (0, \pi]$, then $a(k;q)$ has only finitely many zeros in the sector $\bigl\{k\in \C: \theta\leq\arg k\leq\pi\bigr\}$.\\
\textup{(iii)} For \(q\in \Schwartz\) and $k\in\R$ we have the following behavior for $a(k;q):$
\begin{equation}\label{Real-Line}
a(0) = 1,\qtq{}|a(k)|\geq 1\text{ if }k<0,\qtq{and}|a(k)|\leq 1\text{ if }k>0.
\end{equation}
\end{prop}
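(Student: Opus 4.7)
My plan is to handle the three parts in the order (i), (iii), (ii), since the argument for (ii) invokes the real-axis bounds established in (iii). For (i), I would combine Lemma~\ref{l:Jost-AUX} with the Jost--Pais identity of Lemma~\ref{l:JP}. The former produces a real-analytic (hence locally Lipschitz) $X$-valued extension to $q\in H^{4,4}$ of the map
\[
F(q)(k):=e^{\tfrac i2\|q\|_{L^2}^2}\,\ta(\sqrt k;q)-1,
\]
while the latter identifies $\ta(\sqrt k;q)=a(k;q)$ on the dense subset $\Schwartz\subset H^{4,4}$. This allows me to define the $H^{4,4}$-valued extension of $a$ via
\[
a(k;q)-e^{-\tfrac i2\|q\|_{L^2}^2}=e^{-\tfrac i2\|q\|_{L^2}^2}\,F(q)(k).
\]
Since $q\mapsto e^{-\tfrac i2\|q\|_{L^2}^2}$ is a smooth unit-modulus scalar function of $q\in H^{4,4}$ (it depends only on $\|q\|_{L^2}^2$), the claim then follows from the elementary product rule for locally Lipschitz maps into a Banach space.

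For (iii), the value $a(0;q)=1$ is immediate from Corollary~\ref{c:Limits} combined with the continuity of $a$ on $\overline{\C^+}$ supplied by Lemma~\ref{l:Jost-AUX}. The remaining inequalities for $k\in\R\setminus\{0\}$ are read directly from the boundary identity \eqref{S structure}: choosing $\lambda=\sqrt k\in\R$ when $k>0$ gives
\[
|a(k;q)|^2=|\ta(\lambda;q)|^2=1-|\tb(\lambda;q)|^2\leq 1,
\]
while choosing $\lambda=i\sqrt{|k|}\in i\R$ when $k<0$ gives
\[
|a(k;q)|^2=|\ta(\lambda;q)|^2=1+|\tb(\lambda;q)|^2\geq 1,
\]
where in each case $\sqrt k$ is taken in the closed first quadrant.

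For (ii), since $a(0;q)=1$ the function $a(\,\cdot\,;q)$ is not identically zero, so its zeros in $\C^+$ are isolated. The decay $|a(k;q)-e^{-\tfrac i2\|q\|_{L^2}^2}|\lesssim \<k\>^{-1}$ from Lemma~\ref{l:Jost-AUX} forces $|a(k;q)|\geq\tfrac12$ outside some disk $\{|k|\leq R\}$, so all zeros in the sector $\{\theta\leq\arg k\leq\pi\}$ lie in the compact set $K:=\{k\in\C:\theta\leq\arg k\leq\pi,\ |k|\leq R\}$. Were $K$ to contain infinitely many zeros, they would accumulate at some point of $K$; since interior zeros are isolated, any accumulation point must lie on $\p\C^+\cap K\subseteq(-\infty,0]$. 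But by (iii) and continuity of $a$, $|a|\geq\tfrac12$ on an $\overline{\C^+}$-neighborhood of each point of $(-\infty,0]\cap K$, which precludes such accumulation. The main obstacle I anticipate is exactly this last step: zeros of a holomorphic function in a compact region can \emph{a priori} cluster on the boundary, and ruling this out requires \emph{both} the large-$k$ bound (to truncate to a compact set) and the lower bound on $(-\infty,0]$ from (iii) (to preclude boundary clustering); without either ingredient the finiteness claim could fail.
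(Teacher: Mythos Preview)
Your proposal is correct and follows essentially the same approach as the paper, which tersely writes that the claims follow from Lemmas~\ref{l:Jost-AUX} and~\ref{l:JP} together with \eqref{S structure}. You have simply unpacked the details that the paper leaves implicit, including the compactness-plus-boundary-nonvanishing argument for (ii), which is exactly what those lemmas and \eqref{Real-Line} are meant to supply.
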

\begin{proof}
The claims follow from Lemmas~\ref{l:Jost-AUX} and~\ref{l:JP}, where we note that \eqref{Real-Line} is a consequence of \eqref{S structure}.
\end{proof}

Our last results in this section concern the zeros of the perturbation determinant. Given $q\in L^2$, \eqref{E:alt a defn} shows that \(a(k;q)\) has a zero at $k=\zero\in \C^+$ if and only if there is a non-zero \(\phi\in L^2\) so that
\begin{equation}\label{EV}
\phi = z (-iz - \p)^{-1}q (-iz + \p)^{-1}\bar q \phi.
\end{equation}

Recalling the definition of $B_M$ from \eqref{BM}, we then have the following:

\begin{lem}\label{l:EF}
Let \(q\in B_M\) and suppose that \(\phi\in L^2\) is a non-zero solution to \eqref{EV} with \(\zero\in \C^+\). Then the vector-valued function $\psi$ with
\[
\psi_1 = \phi  \qtq{and}\psi_2 = -\sqrt \zero\bigl(-i\zero + \p\bigr)^{-1}\bar q\phi
\]
belongs to $H^1(\R)$ and is a solution to
\begin{equation}\label{e:psi}
\psi' = -i\sigma_3\zero\psi + \sqrt \zero\begin{bmatrix}0&q\\-\bar q&0\end{bmatrix}\psi.
\end{equation}

Further, we have the estimates
\begin{align}
\|\psi'\|_{L^2} &\lesssim|\zero|\<M\>\|\psi\|_{L^2},\label{zeta-1}\\
\|\psi\|_{L^4} &\lesssim \bigl(|\zero|\<M\>\bigr)^{\frac14}\|\psi\|_{L^2}.\label{zeta-2}
\end{align}
\end{lem}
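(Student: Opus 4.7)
The plan is a direct bootstrap: first verify \eqref{e:psi} by a short computation, then establish $\psi\in L^2\cap L^\infty$ via Volterra representations of the resolvents $(-i\zero\pm\p)^{-1}$ on $\C^+$, and finally read off \eqref{zeta-1} and membership in $H^1$ directly from the ODE. The $L^4$ bound \eqref{zeta-2} will then be immediate from $\|\psi\|_{L^4}^4\leq\|\psi\|_{L^\infty}^2\|\psi\|_{L^2}^2$.

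For the ODE, applying $-i\zero+\p$ to the defining formula for $\psi_2$ produces the second row of \eqref{e:psi} by inspection. Substituting $\psi_2$ back into \eqref{EV} yields $\phi=-\sqrt\zero\,(-i\zero-\p)^{-1}q\psi_2$, and applying $-i\zero-\p$ gives the first row.

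For the $L^2$ and $L^\infty$ bounds, since $\Im\zero>0$ the resolvents are realized by the Volterra integrals
\[
\psi_2(x)=-\sqrt\zero\int_{-\infty}^x e^{i\zero(x-y)}\bar q(y)\phi(y)\,dy,\qquad \phi(x)=-\sqrt\zero\int_x^\infty e^{i\zero(y-x)}q(y)\psi_2(y)\,dy,
\]
the second coming from $\phi=-\sqrt\zero(-i\zero-\p)^{-1}q\psi_2$. Young's inequality applied to the first expression shows $\psi_2\in L^2$ (the convolution kernel $e^{i\zero\cdot}\bbo_{[0,\infty)}$ lies in $L^2$ and $\bar q\phi\in L^1$). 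The pointwise bound $|e^{i\zero(\cdot)}|\leq 1$ on the appropriate half-lines, together with Cauchy--Schwarz, gives
\[
\|\psi_2\|_{L^\infty}\leq\sqrt{|\zero|M}\|\phi\|_{L^2},\qquad \|\phi\|_{L^\infty}\leq\sqrt{|\zero|M}\|\psi_2\|_{L^2},
\]
whence $\|\psi\|_{L^\infty}^2\lesssim|\zero|M\|\psi\|_{L^2}^2$.

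With this embedding in hand, \eqref{e:psi} and Hölder's inequality give
\[
\|\psi'\|_{L^2}\leq|\zero|\|\psi\|_{L^2}+\sqrt{|\zero|}\|q\|_{L^2}\|\psi\|_{L^\infty}\lesssim|\zero|(1+M)\|\psi\|_{L^2},
\]
which is \eqref{zeta-1} and simultaneously shows $\psi\in H^1$. Then $\|\psi\|_{L^4}^4\leq\|\psi\|_{L^\infty}^2\|\psi\|_{L^2}^2\lesssim|\zero|M\|\psi\|_{L^2}^4$ yields \eqref{zeta-2}. The only subtlety is selecting the correct integration contour for each of $(-i\zero\pm\p)^{-1}$: since $\Im\zero>0$, the exponentials $e^{\pm i\zero x}$ decay in opposite directions, so only one contour produces the bounded $L^1\to L^\infty$ Green's function in each case. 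Once this is settled, no absorption argument is needed, and both \eqref{zeta-1} and \eqref{zeta-2} follow without difficulty.
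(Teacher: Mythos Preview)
Your proof is correct and reaches the same conclusions, but the route differs from the paper's in a way worth noting. The paper establishes \eqref{zeta-1} by applying the Gagliardo--Nirenberg inequality $\|\psi\|_{L^\infty}\lesssim\|\psi\|_{L^2}^{1/2}\|\psi'\|_{L^2}^{1/2}$ directly to the ODE estimate and then absorbing the resulting $\|\psi'\|_{L^2}^{1/2}$ factor via Young's inequality; \eqref{zeta-2} then follows from the Gagliardo--Nirenberg bound $\|\psi\|_{L^4}^2\lesssim\|\psi\|_{L^2}^{3/2}\|\psi'\|_{L^2}^{1/2}$. You instead extract the $L^\infty$ bound $\|\psi\|_{L^\infty}^2\lesssim|\zero|M\|\psi\|_{L^2}^2$ directly from the explicit Volterra kernels and Cauchy--Schwarz, which lets you read off \eqref{zeta-1} from the ODE with no absorption step, and gives \eqref{zeta-2} by the trivial interpolation $\|\psi\|_{L^4}^4\leq\|\psi\|_{L^\infty}^2\|\psi\|_{L^2}^2$. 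Your approach is more elementary and slightly more explicit, exploiting the specific resolvent structure; the paper's argument is more generic, relying only on the form of the ODE and standard interpolation, and would transfer unchanged to settings where an explicit Green's function is unavailable.
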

\begin{proof}
As \(\phi\in L^2\) is a solution of \eqref{EV} and $q\in L^2$, it is clear that $\phi\in H^1$ and thence that $\psi\in H^1$ and solves \eqref{e:psi}.

The Gagliardo--Nirenberg inequality gives us
\[
\|\psi'\|_{L^2}\lesssim|\zero|\|\psi\|_{L^2} + \sqrt{|\zero|}\|q\|_{L^2}\|\psi\|_{L^\infty}\lesssim|\zero|\|\psi\|_{L^2} + \sqrt{|\zero|}\|q\|_{L^2}\|\psi\|_{L^2}^{\frac12}\|\psi'\|_{L^2}^{\frac12},
\]
and the estimate \eqref{zeta-1} follows from Young's inequality.

Combining the Gagliardo--Nirenberg inequality
\[
\|\psi\|_{L^4}^2\lesssim \|\psi\|_{L^2}^{\frac32}\|\psi'\|_{L^2}^{\frac12},
\]
with \eqref{zeta-1} yields \eqref{zeta-2}.
\end{proof}

\begin{rem}\label{R}
For each $z\in \C^+$, constancy of the Wronskian guarantees that the ODE \eqref{e:psi} admits at most one solution in $H^1$, up to scalar multiples.  By uniqueness for ODEs, this solution is non-vanishing at each $x\in \R$.   Note that this constrains the geometric multiplicity of eigenvalues, but not their algebraic multiplicity.  Thus, we cannot conclude that the perturbation determinant has only simple zeros.
\end{rem}

We now apply Lemma~\ref{l:EF} to obtain:

\begin{prop}[Zero-free region]\label{p:Zeros}
Let \(Q\subseteq B_M\) be equicontinuous. Given \(\epsilon>0\), choose \(N\in 2^\Z\) so that \eqref{HF-UNI} holds. If \(q\in Q\), $z\in \C^+$, and \(a(z;q)=0\), then
\begin{equation}\label{No high zeros}
|\zero|\leq \tfrac N{\epsilon^2}\qtq{or} \arg \zero \lesssim_M \epsilon .
\end{equation}

\end{prop}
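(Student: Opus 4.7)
The plan is to show that whenever $|z| > N/\eps^2$ and $a(z;q) = 0$ with $z \in \C^+$, one necessarily has $\arg z \lesssim_M \eps$. Lemma~\ref{l:EF} provides the crucial analytic input: such a zero yields a nonzero $\psi = (\psi_1,\psi_2)^T \in H^1(\R;\C^2)$ solving the Kaup--Newell system \eqref{e:psi} together with the $L^4$ control $\|\psi\|_{L^4}^2 \lesssim (|z|\langle M\rangle)^{1/2}\|\psi\|_{L^2}^2$.

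The heart of the argument will be an integrated ``pseudo-Wronskian'' identity obtained from $\tfrac{d}{dx}(|\psi_1|^2 - |\psi_2|^2)$. Computing directly from \eqref{e:psi}, the diagonal terms contribute $2\Im(z)(|\psi_1|^2 + |\psi_2|^2)$, while the two off-diagonal contributions combine (since $\sqrt z + \overline{\sqrt z} = 2\Re(\sqrt z)$) into $4\Re(\sqrt z)\Re(q\psi_2\bar\psi_1)$. Integrating over $\R$ (legitimate since $\psi\in H^1$ vanishes at $\pm\infty$), dividing by $4\Re(\sqrt z) > 0$, and substituting $\Im(z) = 2\Re(\sqrt z)\Im(\sqrt z)$, I expect to obtain
\begin{equation*}
\Im(\sqrt z)\,\|\psi\|_{L^2}^2 = -\int_\R \Re\bigl(q\psi_2\bar\psi_1\bigr)\,dx.
\end{equation*}

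To bound the right-hand side I will split $q = q_{\leq N} + q_{>N}$. For the high-frequency piece, Cauchy--Schwarz together with \eqref{HF-UNI} and the $L^4$ bound yield $\bigl|\int q_{>N}\psi_2\bar\psi_1\,dx\bigr| \leq \|q_{>N}\|_{L^2}\|\psi\|_{L^4}^2 \lesssim_M \eps\sqrt{|z|}\,\|\psi\|_{L^2}^2$. For the low-frequency piece, Bernstein's inequality gives $\|q_{\leq N}\|_{L^\infty} \lesssim \sqrt{NM}$, whence $\bigl|\int q_{\leq N}\psi_2\bar\psi_1\,dx\bigr| \leq \|q_{\leq N}\|_{L^\infty}\|\psi\|_{L^2}^2 \lesssim \sqrt{NM}\,\|\psi\|_{L^2}^2$; the hypothesis $|z| > N/\eps^2$ then converts this to $\lesssim \eps\sqrt{M|z|}\,\|\psi\|_{L^2}^2$. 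Dividing through by $\|\psi\|_{L^2}^2 \neq 0$ yields $\Im(\sqrt z) \lesssim_M \eps\sqrt{|z|}$, equivalently $\sin(\arg z/2) \lesssim_M \eps$; since $\arg z/2 \in (0,\pi/2)$ we conclude $\arg z \lesssim_M \eps$.

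The principal subtlety is the choice to work with $\sqrt z$ rather than $z$ throughout: replacing $\Im(z)$ by $\Im(\sqrt z)$ yields a bound on $\sin(\arg z/2)$ rather than on $\sin(\arg z)$, and since $\arg\sqrt z \in (0,\pi/2)$ on all of $\C^+$ the former unambiguously controls $\arg z$---whereas a bound on $\sin(\arg z)$ alone would be consistent with zeros accumulating near the \emph{negative} real axis, which the stronger statement excludes. Beyond this, the argument is routine: the dyadic frequency split is tuned so that Bernstein at low frequencies matches the $L^4$ bound from Lemma~\ref{l:EF} at high frequencies precisely at the threshold singled out by $|z| > N/\eps^2$.
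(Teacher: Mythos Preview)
Your argument is correct and coincides with the paper's proof. Your computation of $\tfrac{d}{dx}(|\psi_1|^2-|\psi_2|^2)$ followed by integration is exactly the ``take the inner product of \eqref{e:psi} with $\sigma_3\psi$ and take the real part'' step in the paper, and the subsequent frequency splitting $q=q_{\leq N}+q_{>N}$, Bernstein on the low piece, and the $L^4$ bound \eqref{zeta-2} on the high piece are identical to what is done there.
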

\begin{proof}
Applying Lemma~\ref{l:EF}, we may find a solution \(\psi\in H^1\) of \eqref{e:psi} so that \(\|\psi\|_{L^2}=1\). Taking the inner product of \eqref{e:psi} with \(\sigma_3\psi\) and then taking the real part, we obtain
\[
\Im\sqrt \zero = -\Re\<\psi_1,q\psi_2\>.
\]
Applying \eqref{HF} and \eqref{zeta-2}, we then have
\[
\Im\sqrt \zero\leq |\<\psi_1,q\psi_2\>|\lesssim\|q_{\leq N}\|_{L^\infty}\|\psi\|_{L^2}^2 + \|q_{>N}\|_{L^2}\|\psi\|_{L^4}^2\lesssim\sqrt{NM}+ \epsilon\sqrt{|\zero|M\<M\>},
\]
which gives us \eqref{No high zeros}.
\end{proof}

\section{The trace formula}\label{S:4}

An important ingredient in our proof of Theorem~\ref{t:Main} will be the following proposition, which appears as~\cite[Lemma~2.3]{BP}. For completeness, we give a simple, self-contained proof here:

\begin{prop}[Mass $=$ Zeros $+$ Winding]\label{p:WindingAlternative}
Let \(q\in\Schwartz\) and \(\theta\in(0,\pi]\) so that \(a(k;q)\) has no zeros on the ray \(e^{i\theta}\R^+\). Then 
\begin{equation}\label{WindingAlternative}
\|q\|_{L^2}^2 = 4\pi \ell - \tfrac2i\int_0^{+\infty e^{i\theta}}\tfrac{a'(k)}{a(k)}\,dk,
\end{equation}
where \(\ell\) denotes the number of zeros in the region \(\{k\in \C: \theta<\arg k<\pi\}\), counted according to multiplicity.
\end{prop}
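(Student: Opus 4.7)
The plan is to apply the argument principle to $a(\cdot;q)$ on a truncated sector and compute the resulting boundary integrals in the limit. Fix $R$ large and $\epsilon>0$ small, chosen so that $a$ vanishes on neither the inner arc $|k|=\epsilon$ nor the outer arc $|k|=R$ within $\overline{\C^+}$, and set
\[
\Omega_{R,\epsilon}=\bigl\{k\in\C:\epsilon<|k|<R,\ \theta<\arg k<\pi\bigr\}.
\]
Its counterclockwise boundary has four pieces: the outward ray $\gamma_1$ from $\epsilon e^{i\theta}$ to $Re^{i\theta}$, the outer arc $\gamma_2$ with $\arg k$ increasing from $\theta$ to $\pi$, the segment $\gamma_3$ from $-R$ to $-\epsilon$, and the inner arc $\gamma_4$. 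By hypothesis $a\neq 0$ on $\gamma_1$, and $|a|\geq 1$ on $\gamma_3$ by Proposition~\ref{p:FACTS}(iii), so the argument principle yields
\[
\sum_{j=1}^4\int_{\gamma_j}\tfrac{a'(k)}{a(k)}\,dk = 2\pi i\,\ell(R,\epsilon),
\]
where $\ell(R,\epsilon)\to\ell$ as $R\to\infty$ and $\epsilon\to 0$ by Proposition~\ref{p:FACTS}(ii).

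The arc integrals vanish in the limit. From Proposition~\ref{p:FACTS}(i), $a-e^{-\frac i2\|q\|_{L^2}^2}\in X$, so $a\to e^{-\frac i2\|q\|_{L^2}^2}$ uniformly on $\overline{\C^+}$ and $|a'(k)|\lesssim\langle k\rangle^{-2}$; hence $|a'/a|=O(R^{-2})$ on $\gamma_2$ and the arc integral is $O(R^{-1})\to 0$. On $\gamma_4$, continuity of $a$ and $a'$ at the origin together with $a(0)=1$ bounds $a'/a$, making the arc integral $O(\epsilon)\to 0$. Moreover, $\int_{\gamma_1}\to\int_0^{+\infty e^{i\theta}}\tfrac{a'(k)}{a(k)}\,dk$.

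To evaluate the remaining contribution, I claim $\int_{-\infty}^0\tfrac{a'(k)}{a(k)}\,dk=\tfrac i2\|q\|_{L^2}^2$, which I will establish by homotopy. Set $a_t(k):=a(k;tq)$ for $t\in[0,1]$. Proposition~\ref{p:FACTS}(i) gives joint continuity of $(t,k)\mapsto a_t(k)$ on $[0,1]\times\overline{\C^+}$ together with a continuous extension at $|k|=\infty$ given by $a_t(\infty)=e^{-\frac i2 t^2\|q\|_{L^2}^2}$. Since $tq\in\Schwartz$, Proposition~\ref{p:FACTS}(iii) yields $|a_t(k)|\geq 1$ on the rectangle $[0,1]\times[-\infty,0]$. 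As this rectangle is simply connected and $a_t$ is nonvanishing there, the covering $\exp\colon\C\to\C\setminus\{0\}$ admits a continuous lift $L$ of $a_t(k)$ with $L(0,0)=0$. Because $a_0\equiv 1$ and $a_t(0)\equiv 1$, $L$ vanishes on the edges $\{0\}\times[-\infty,0]$ and $[0,1]\times\{0\}$. On the final edge, $\exp L(t,-\infty)=e^{-\frac i2 t^2\|q\|_{L^2}^2}$ forces $L(t,-\infty)+\tfrac i2 t^2\|q\|_{L^2}^2\in 2\pi i\Z$; this continuous integer-valued function of $t$ vanishes at $t=0$, so $L(t,-\infty)=-\tfrac i2 t^2\|q\|_{L^2}^2$. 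Evaluating at $t=1$,
\[
\int_{-\infty}^0\tfrac{a'(k)}{a(k)}\,dk = L(1,0)-L(1,-\infty) = \tfrac i2\|q\|_{L^2}^2.
\]
Substituting into the argument principle and multiplying through by $-2/i$ produces \eqref{WindingAlternative}.

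The main obstacle is this last step: the asymptotic $a(-\infty)=e^{-\frac i2\|q\|_{L^2}^2}$ determines $\log a(-\infty)$ only modulo $2\pi i$, whereas the trace formula demands the specific representative $-\tfrac i2\|q\|_{L^2}^2$. The bound $|a_t|\geq 1$ on the negative real axis for every $t\in[0,1]$ is precisely what rules out $2\pi i$ jumps in $L(t,-\infty)$ as we deform continuously from the trivial potential.
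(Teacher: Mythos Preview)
Your proof is correct and follows essentially the same route as the paper's. Both hinge on the argument principle together with the homotopy $t\mapsto tq$ (equivalently, connectedness of $\Schwartz$) and the lower bound $|a|\geq 1$ on the negative real axis to pin down the branch of $\log a(-\infty)$; the only cosmetic difference is that the paper first isolates the case $\theta=\pi$ to establish $\int_{-\infty}^0 a'/a = \tfrac{i}{2}\|q\|_{L^2}^2$ and then invokes the argument principle, whereas you carry out the contour argument on the full truncated sector in one pass.
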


\begin{proof} We first treat $\theta=\pi$.  By Proposition~\ref{p:FACTS}, 
\begin{equation}\label{RealWinding}
\int_{-\infty}^0 \tfrac{a'(k)}{a(k)}\,dk - \tfrac i2\|q\|_{L^2}^2 = 2\pi i d(q),
\end{equation}
for some \(d(q)\in \Z\). As the left hand side is continuous as a map from \(\Schwartz\to \C\) and \(d(0) = 0\), we must have \(d(q) = 0\) for all \(q\in \Schwartz\).

For $\theta\in(0,\pi)$, \eqref{WindingAlternative} follows from Proposition~\ref{p:FACTS} and the argument~principle.
\end{proof}

Proposition~\ref{p:WindingAlternative} provides a direct connection between $q$ and the analytic function $a(k;q)$.  The trace formula \eqref{Trace} below has the same structure.  We prove it here for general $q\in\Schwartz$, following the path established already in \cite{MR0303132}.  The trace formula for those $q\in\Schwartz$ without spectral singularities played an important role in \cite{BP}.

\begin{prop}[The trace formula]\label{p:Tracey}
Fix \(q\in \Schwartz\) and let $\{\zero_j\}$ enumerate the zeros of \(a(k;q)\) in $\C^+$, repeated according to multiplicity.  Then there is a finite positive measure $\mu_q$ on $\R$ so that
\begin{equation}\label{Tracey}
a(k) = \prod \tfrac{\bar \zero_j}{\zero_j}\tfrac{k - \zero_j}{k - \bar \zero_j}\exp\biggl[\tfrac1{i\pi} \int_\R \tfrac k{k-s} \,d\mu_q(s)\biggr] \quad\text{for all $k\in \C^+$},
\end{equation}
and the following trace formula holds
\begin{equation}\label{Trace}
\|q\|_{L^2}^2 = 4\sum \arg(\zero_j) + \tfrac2\pi\int_\R d\mu_q(s).
\end{equation}
\end{prop}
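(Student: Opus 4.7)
The strategy is the standard Nevanlinna inner--outer factorization adapted to the half-plane, followed by an asymptotic evaluation as $k\to i\infty$ to extract \eqref{Trace}. By Proposition~\ref{p:FACTS}(i), $a(\cdot;q)$ lies in $H^\infty(\C^+)$ and extends to a $C^1$ function on $\overline{\C^+}$ with $a(0)=1$ and $a(k)\to e^{-\frac i2\|q\|_{L^2}^2}$ as $|k|\to\infty$. Its zeros $\{z_j\}$ in $\C^+$ satisfy the Blaschke condition, so the product $B(k):=\prod_j \tfrac{\bar z_j}{z_j}\tfrac{k-z_j}{k-\bar z_j}$ converges to a Blaschke product with $B(0)=1$, $|B(k)|\le 1$ on $\C^+$, and $|B(s)|=1$ a.e.\ on $\R$. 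Setting $F:=a/B$ (holomorphic and nonvanishing on $\C^+$, with $F(0)=1$ and $|F|=|a|$ on $\R$), the target is the outer-function identity
\[
F(k) = \exp\!\left[\tfrac{1}{i\pi}\int_\R \tfrac{k}{k-s}\,d\mu_q(s)\right],\qquad d\mu_q(s):=-\tfrac{\log|a(s)|}{s}\,ds,
\]
which is precisely \eqref{Tracey}.

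\textbf{Properties of $\mu_q$ and verification of \eqref{Tracey}.} Positivity of $\mu_q$ is immediate from \eqref{Real-Line}, since $\log|a(s)|$ and $s$ have opposite signs on $\R\setminus\{0\}$. Finiteness splits into three local estimates: near $s=0$ the $C^1$-regularity of $a$ with $a(0)=1$ gives $\log|a(s)|=O(s)$, so $\log|a(s)|/s$ is bounded; at $s=\pm\infty$, the decay $a(s)-e^{-\frac i2\|q\|_{L^2}^2}=O(|s|^{-1})$ forces $\log|a(s)|=O(|s|^{-1})$ and hence $\log|a(s)|/s=O(s^{-2})$; at any real zero of $a$, local analyticity yields only a logarithmic singularity in $\log|a|$, which is integrable. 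For the representation itself, set
\[
g(k):=\log F(k)-\tfrac{1}{i\pi}\int_\R \tfrac{k}{k-s}\,d\mu_q(s)
\]
with branch fixed by $g(0)=0$. The direct computation $\Re\!\left[\tfrac{1}{i\pi}\cdot\tfrac{k}{k-s}\right]=\tfrac{-sy}{\pi|k-s|^2}$ together with the Poisson-kernel Fatou limit applied to the absolutely continuous measure $\mu_q$ (whose density is precisely $-\log|a(s)|/s$) gives $\Re g = 0$ a.e.\ on $\R$. Boundedness of $g$ on $\C^+$ follows because $F,1/F\in H^\infty$ off the (isolated) real zeros of $a$, where the contribution is logarithmic and controlled, and because the integral term is bounded thanks to $\mu_q(\R)<\infty$. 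A Schwarz reflection / Phragm\'en--Lindel\"of argument then forces $g$ to be purely imaginary constant, and $g(0)=0$ yields $g\equiv 0$.

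\textbf{Deducing \eqref{Trace}.} Evaluating \eqref{Tracey} at $k=i\kappa$ and letting $\kappa\to\infty$, the Blaschke product tends to $\prod_j \tfrac{\bar z_j}{z_j}=\exp(-2i\sum_j\arg z_j)$; since $\bigl|\tfrac{i\kappa}{i\kappa-s}\bigr|\le 1$ and $\mu_q$ is finite, dominated convergence gives $\int_\R\tfrac{i\kappa}{i\kappa-s}\,d\mu_q(s)\to\mu_q(\R)$. Comparing with $a(i\kappa)\to e^{-\frac i2\|q\|_{L^2}^2}$ and tracking continuous branches of the logarithm along the positive imaginary axis from $\log a(0)=0$ (so that no modular ambiguity enters) yields exactly \eqref{Trace}; the formula for the continuous branch of $\log B$ along the imaginary axis can be cross-checked against Proposition~\ref{p:WindingAlternative} with $\theta=\pi/2$.

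\textbf{Main obstacle.} The delicate step is verifying that $F=a/B$ has trivial singular inner factor in its Nevanlinna factorization, i.e., that $F$ really is the outer function built from $|a|$ on the boundary. The key inputs are the global continuity of $a$ on $\overline{\C^+}$, the $C^1$-decay estimates encoded in $a\in X$, and the sign structure \eqref{Real-Line}. A convenient way to organize this is to transfer to the unit disk via a Cayley transform and invoke the classical inner-outer theorem there, with the boundary continuity of $a$ together with the mild (logarithmic) vanishing rate of $|a|$ at any real zero precisely what excludes singular boundary measures.
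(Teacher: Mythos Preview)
There is a genuine gap in your argument, and it lies exactly where you flag it as the ``main obstacle'': you assert that the singular inner factor of $a/B$ is trivial, and you take $d\mu_q(s) = -\tfrac{\log|a(s)|}{s}\,ds$ to be purely absolutely continuous. Your justification relies on two claims that are not available here. First, you invoke ``local analyticity'' of $a$ at its real zeros; but Proposition~\ref{p:FACTS} only places $a$ in the space $X$, which gives a $C^1$ (not analytic) extension to $\partial\C^+$. Second, you claim that a ``logarithmic vanishing rate'' of $|a|$ at a real zero excludes singular boundary measures. Even granting $C^1$-regularity, this only yields $|a(s)|\le C|s-s_0|$, i.e.\ an \emph{upper} bound on $|a|$; it gives no control from below, and so neither pins down the vanishing rate nor rules out a point mass of the singular measure at $s_0$. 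Continuity of the boundary values alone does not exclude a singular inner factor.

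The paper's proof does \emph{not} attempt to show the singular inner factor is trivial. Instead it uses the general fact that the nontangential limit of $a$ must vanish $\sigma$-a.e., together with \eqref{Real-Line} and the behaviour of $a$ at $0$ and $\infty$, to conclude that $\sigma$ is supported on a compact subset of $(0,\infty)$; it then absorbs $\tfrac1s\,d\sigma(s)$ into $\mu_q$, so that
\[
d\mu_q(s) = -\tfrac1s\log|a(s;q)|\,ds + \tfrac1s\,d\sigma(s)
\]
is the finite positive measure in \eqref{Tracey}. This is precisely what lets the argument handle general $q\in\Schwartz$, including those with spectral singularities (real zeros of $a$). Your formula for $\mu_q$ would be incorrect whenever $\sigma\neq 0$, and your outline gives no mechanism to prove $\sigma=0$. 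The rest of your plan (the $k\to i\infty$ asymptotics for \eqref{Trace}) is fine once the correct $\mu_q$ is in place; the paper instead reads \eqref{Trace} off from Proposition~\ref{p:WindingAlternative}, but either route works.
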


\begin{proof} We begin by proving \eqref{Tracey}.  This will follow by combining the properties of $a(k;q)$ described in Proposition~\ref{p:FACTS} with the general theory of inner/outer factorization as discussed, for example, in \cite{MR565451}. 

As $a(k;q)$ is a non-zero bounded holomorphic function in $\C^+$ that extends continuously to the boundary,
\begin{align}\label{Nevan}
\sum \tfrac{\Im z_j}{1+|z_j|^2} < \infty \qtq{and}  \tfrac{1}{1+s^2}\log| a(s;q) | \in L^1(\R).
\end{align}
Moreover, $a(k;q)$ admits the factorization
\begin{align}\label{innerouter}
a(k;q)= e^{i\tilde \varphi} \tilde B(k) \tilde {\mathcal O}(k) \tilde S(k) 
\end{align}
where $\tilde\varphi\in[0,2\pi)$, $\tilde B(k)$ is the Blaschke product formed from the zeros $\{z_j\}$, and the outer factor $\tilde  {\mathcal O}(k)$ and the singular inner factor $\tilde S(k)$ take the form
\begin{align*}
\tilde  {\mathcal O}(k)&= \exp\biggl\{\tfrac{i}{\pi} \int_\R \bigl[\tfrac 1{k-s} + \tfrac{s}{1+s^2}\bigr] \log |a(s)|\,ds\biggr\},\\
\tilde S(k)&=\exp\biggl\{ i \beta k  - \tfrac{i}{\pi} \int_\R \bigl[\tfrac 1{k-s} + \tfrac{s}{1+s^2}\bigr] \,d\sigma(s) \biggr\}.
\end{align*}
Here $\beta\geq 0$ and $\sigma$ is a positive measure satisfying $\int \frac{d\sigma(s)}{1+s^2} <\infty$ that is singular with respect to Lebesgue measure.

By Proposition~\ref{p:FACTS}, we know that there is a $\delta>0$ so that $\delta < |z_j| < \delta^{-1}$ and $0< \arg z_j < \pi -\delta$, uniformly in $j$.  Likewise, it shows that $\log| a(s;q) |$ is $O( |s| )$ as $s\to 0$ and $O(|s|^{-1})$ as $|s|\to\infty$.  In this way, we may upgrade \eqref{Nevan} to
\begin{align}\label{Nevan'}
\sum \arg z_j  < \infty \qtq{and}  0 \leq  - \tfrac{1}{s}\log |a(s;q)| \in L^1(\R),
\end{align}
where positivity is deduced from \eqref{Real-Line}.  These observations allow us to replace $\tilde B(k)$ and $\tilde  {\mathcal O}(k)$ by
\begin{align}\label{BOfac}
B(k) = \prod \tfrac{\bar z_j}{z_j}\tfrac{k - z_j}{k - \bar z_j}    \qtq{and}  
	\mathcal O (k) = \exp\biggl\{\tfrac{i}{\pi} \int_\R \bigl[\tfrac 1{k-s} + \tfrac1s\bigr] \log |a(s)|\,ds\biggr\},
\end{align}
provided we update $\tilde \varphi$ accordingly.

A universal property of the factorization \eqref{innerouter} is that the non-tangential limit of $a(k;q)$ must vanish $\sigma$-almost everywhere.  Consequently, Proposition~\ref{p:FACTS} confines the support of $\sigma$ to a compact subset of $(0,\infty)$.  In particular, $\int\frac{d\sigma(s)}{|s|}<\infty$.  Likewise, as $|a(i\kappa;q)|\to 1$ as $\kappa\to\infty$, we see that $\beta=0$.

In light of all this, we may write $a(k)=e^{i\varphi}B(k){\mathcal O}(k)S(k)$ with singular factor
\begin{align}\label{SIfac}
S(k) = \exp\biggl\{ - \tfrac{i}{\pi} \int_\R \bigl[\tfrac 1{k-s} + \tfrac{1}{s}\bigr] \,d\sigma(s) \biggr\}
\end{align}
and consequently the representation \eqref{Tracey} will hold with the finite positive measure
\begin{align*}
d\mu_q(s)  = - \tfrac{1}{s}\log |a(s;q)|\,ds + \tfrac{1}{s}d\sigma(s),
\end{align*}
once we verify that $e^{i\varphi}=1$.  This last step is easily deduced by setting $k=i\kappa$ and sending $\kappa\to0$; indeed, by the dominated convergence theorem, $\mathcal O(i\kappa)$ and $S(i\kappa)$ converge to 1 as $\kappa\to 0$.

The trace formula \eqref{Trace} now follows from \eqref{WindingAlternative} and \eqref{Tracey}.
\end{proof}

\begin{lem}\label{4.3} Fix \(q\in \Schwartz\).  If the perturbation determinant $a(k;q)$ has no zeros in the sector 
\begin{equation}\label{Sigma}
\Sigma = \bigl\{k\in \C: \tfrac\pi4<\arg k<\pi\bigr\},
\end{equation}
then it admits the lower bound
\begin{equation}\label{LB}
|a(i\kappa)|\gtrsim_M1 \quad\text{uniformly for $\kappa>0$}. 
\end{equation}
\end{lem}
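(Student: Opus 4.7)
The plan is to exploit the inner/outer factorization of $a(k;q)$ established in Proposition~\ref{p:Tracey} together with the mass bound for $\mu_q$ given by the trace formula~\eqref{Trace}. Under the hypothesis, every zero $z_j=r_je^{i\theta_j}$ of $a$ in $\C^+$ must lie in the strip $\{0<\arg z\le \pi/4\}$, and moreover Proposition~\ref{p:FACTS}(ii) combined with the large-$k$ behavior from Proposition~\ref{p:FACTS}(i) guarantees that zeros can accumulate only towards the positive real axis. The estimate \eqref{Trace} then gives $\sum \theta_j \le \tfrac14 \|q\|_{L^2}^2\le M/4$ and $\mu_q(\R)\le \tfrac{\pi}{2}\|q\|_{L^2}^2\le \pi M/2$. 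The goal is to control the Blaschke product and the outer exponential separately on the imaginary ray $k=i\kappa$.

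For the Blaschke product, I would compute
\[
\left|\tfrac{i\kappa-z_j}{i\kappa-\bar z_j}\right|^2 = 1 - x_j(\kappa),\qquad x_j(\kappa) = \tfrac{4\kappa r_j \sin\theta_j}{\kappa^2 + 2\kappa r_j\sin\theta_j + r_j^2}.
\]
Two uniform bounds are needed: first, by AM-GM one gets $x_j(\kappa)\le 2\sin\theta_j\le 2\theta_j$; second, maximizing in $\kappa$ (which occurs at $\kappa=r_j$) gives $x_j(\kappa)\le \frac{2\sin\theta_j}{1+\sin\theta_j}\le 2(\sqrt{2}-1)<1$, using crucially that $\theta_j\le \pi/4$ so that $\sin\theta_j\le 1/\sqrt{2}$. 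Consequently, using $-\log(1-x)\le Cx$ for $x\le 2(\sqrt{2}-1)$,
\[
-\log|B(i\kappa)| = \tfrac12\sum_j -\log(1-x_j(\kappa)) \lesssim \sum_j \theta_j \lesssim M.
\]

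For the outer factor $\mathcal O(k)=\exp\bigl[\tfrac1{i\pi}\int \tfrac{k}{k-s}d\mu_q(s)\bigr]$, a direct computation at $k=i\kappa$ yields
\[
\log|\mathcal O(i\kappa)| = -\tfrac1\pi \int_\R \tfrac{\kappa s}{\kappa^2+s^2}\,d\mu_q(s).
\]
On $(-\infty,0)$ the integrand is non-positive, so the corresponding contribution is non-negative. On $(0,\infty)$ the elementary bound $\kappa s/(\kappa^2+s^2)\le 1/2$ combined with the mass bound $\mu_q(\R)\le \pi M/2$ yields $\log|\mathcal O(i\kappa)|\ge -\mu_q(\R_+)/(2\pi) \ge -M/4$. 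Multiplying the two lower bounds gives $|a(i\kappa)|=|B(i\kappa)||\mathcal O(i\kappa)|\gtrsim_M 1$ uniformly in $\kappa>0$, as required.

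The main obstacle is ensuring the Blaschke product is bounded below in the presence of (possibly) infinitely many zeros accumulating at the real axis; this is precisely where the zero-free sector hypothesis does work, preventing $x_j(\kappa)$ from approaching $1$ and allowing the linearization $-\log(1-x_j)\lesssim x_j$. The remaining pieces are routine applications of the trace formula to convert qualitative summability into a quantitative bound depending only on $M$.
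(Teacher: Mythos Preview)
Your proof is correct and follows essentially the same approach as the paper. Both arguments use the factorization \eqref{Tracey}, bound the exponential factor on $k=i\kappa$ via $|\kappa s|/(\kappa^2+s^2)\le\tfrac12$ together with the mass bound $\mu_q(\R)\le\tfrac{\pi}{2}M$ from \eqref{Trace}, and control the Blaschke product by showing that each factor satisfies $-\log\bigl|\tfrac{i\kappa-z_j}{i\kappa-\bar z_j}\bigr|\lesssim\theta_j$ (the paper does this via the Mean Value Theorem applied to $\theta\mapsto\log\tfrac{1-\sin\theta}{1+\sin\theta}$, you via $-\log(1-x)\le Cx$ on the range $x\le 2(\sqrt2-1)$; these are equivalent since your $x_j(r_j)=\tfrac{2\sin\theta_j}{1+\sin\theta_j}$ is exactly $1$ minus the paper's minimum), and then sum using $\sum\theta_j\le M/4$ from \eqref{Trace}.
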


\begin{proof}
Proposition~\ref{p:Tracey} ensures that
\[
0\leq \tfrac2\pi\int_\R d\mu_q(s)\leq M.
\]
Thus, for \(k = i\kappa\) we may bound
\[
\biggl|\exp\biggl[\tfrac1{i\pi} \int_\R \tfrac {i\kappa}{i\kappa-s} \,d\mu_q(s)\biggr] \biggr|\geq \exp\bigl(-\tfrac12M\bigr).
\]

For fixed \(\theta\in(0,\frac\pi4]\), elementary calculus shows that the function
\[
x\mapsto \Bigl| \frac{ix - e^{i\theta}}{ix - e^{-i\theta}}\Bigr|^2
\]
achieves its minimum value on \(\R^+\) at \(x=1\). Thus for \(\kappa,r>0\), 
\[
\log \Bigl|\frac{i\kappa - re^{i\theta}}{i\kappa - re^{-i\theta}}\Bigr|\geq \log \frac{1 - \sin\theta}{1 + \sin\theta}.
\]
Noting that the right-hand side is a concave function of $\theta\in [0, \tfrac\pi 4]$, the Mean Value Theorem guarantees that
\begin{equation}\label{Calculus}
\log \Bigl|\frac{i\kappa - re^{i\theta}}{i\kappa - re^{-i\theta}}\Bigr|\geq -2\sqrt 2\theta \qtq{for} \theta\in [0, \tfrac\pi 4].
\end{equation}

We now enumerate the zeros of \(a(k;q)\) in \(\C^+\) by \(\{z_j\}\). As \(\arg z_j\leq \frac\pi4\), we may apply \eqref{Calculus} to the Blaschke product in \eqref{BOfac} and then use \eqref{Trace} to estimate
\[
|B(i\kappa)| \geq \exp\Bigl(-2\sqrt 2\sum\arg z_j\Bigr)\geq \exp\bigl(-\tfrac1{\sqrt 2}M\bigr).
\]
This completes the proof of \eqref{LB}.
\end{proof}

\section{The base case}\label{S:5}

Our proof of Theorem~\ref{t:Main} will proceed by induction on the number of zeros of the perturbation determinant in the region \(\Sigma\) defined in \eqref{Sigma}. To this end, we make the following definition:

\begin{defn}\label{d:Order}We say that a set \(Q\subseteq \Schwartz\) has order \( J\geq 0\) if, for every \(q\in Q\), \(a(k;q)\) has exactly \( J\) zeros (counted by multiplicity) in the region \(\Sigma\).
\end{defn}

Given a set \(Q\subseteq \Schwartz\), we define the set
\begin{equation}\label{Qs}
Q_* = \bigl\{q\in \Schwartz: a(k;q)\equiv a(k;\tilde q) \text{ for some $\tilde q\in Q$}\bigr\}.
\end{equation}
Here, by $a(k;q)\equiv a(k;\tilde q)$ we mean $a(k;q)= a(k;\tilde q)$ for all $k\in \overline{\C^+}$.
It is clear that if \(Q\) has order \(J\geq 0\), in the sense of Definition~\ref{d:Order}, then \(Q_*\) also has order \(J\). Moreover, taking $\theta=\pi$ in  Proposition~\ref{p:WindingAlternative}, we obtain the following:

\begin{lem}\label{l:COM} If \(q,\tilde q\in \Schwartz\) satisfy \(a(k;q) \equiv a(k;\tilde q)\), then
\[
\|q\|_{L^2} = \|\tilde q\|_{L^2}.
\]
As a consequence, if \(Q\subseteq B_M\) then \(Q_*\subseteq B_M\).
\end{lem}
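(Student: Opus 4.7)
The plan is to apply Proposition~\ref{p:WindingAlternative} with the choice $\theta = \pi$, which will express $\|q\|_{L^2}^2$ as a functional of $a(\cdot\,;q)$ alone. First I would check the hypothesis: by Proposition~\ref{p:FACTS}(iii), $|a(k;q)|\geq 1$ for every $k\in(-\infty,0)$, so $a(\cdot\,;q)$ has no zeros on the ray $e^{i\pi}\R^+$. Second, with $\theta=\pi$ the sector $\{k\in\C:\pi<\arg k<\pi\}$ is empty, so the zero count $\ell$ in \eqref{WindingAlternative} vanishes. After reversing the orientation of the integral from $0$ to $-\infty$, Proposition~\ref{p:WindingAlternative} collapses to
\begin{equation*}
\|q\|_{L^2}^2 = \tfrac{2}{i}\int_{-\infty}^0 \tfrac{a'(k;q)}{a(k;q)}\,dk.
\end{equation*}
This is the key formula, and it displays the $L^2$ norm as a functional of the boundary values of $a$ on the negative real axis.

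Now, if $q,\tilde q\in\Schwartz$ satisfy $a(k;q)\equiv a(k;\tilde q)$ as functions on $\overline{\C^+}$, then in particular they agree on $(-\infty,0)$, so the right-hand side of the displayed identity is the same for $q$ and $\tilde q$. This gives $\|q\|_{L^2} = \|\tilde q\|_{L^2}$, which is the first assertion. For the consequence, suppose $Q\subseteq B_M$ and let $q\in Q_*$. By the definition \eqref{Qs} of $Q_*$, there exists $\tilde q\in Q$ with $a(k;q)\equiv a(k;\tilde q)$; applying the first part together with $\tilde q\in B_M$ yields $\|q\|_{L^2}^2 = \|\tilde q\|_{L^2}^2 \leq M$, so $q\in B_M$ and hence $Q_*\subseteq B_M$.

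There is really no obstacle at this stage: all of the analytic work has already been carried out upstream, in the construction of the trace-type formula \eqref{WindingAlternative} (which itself relies on the Nevanlinna-class factorization of $a(k;q)$, the argument principle, and the continuity/connectedness argument that forces the integer $d(q)$ in \eqref{RealWinding} to be zero). The lemma amounts to reading off this formula in the cleanest possible case, $\theta=\pi$, and noticing that it expresses $M(q)$ as a functional of $a(\cdot\,;q)$.
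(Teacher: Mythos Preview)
Your proof is correct and follows essentially the same approach as the paper, which simply notes that the lemma follows by taking $\theta=\pi$ in Proposition~\ref{p:WindingAlternative}. One small quibble: the Nevanlinna factorization is used in Proposition~\ref{p:Tracey}, not in Proposition~\ref{p:WindingAlternative}; the latter relies only on Proposition~\ref{p:FACTS} and the argument principle.
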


We are now in a position to prove the base case of our inductive argument:

\begin{prop}[The base case]\label{p:Base}
If \(Q\subseteq B_M\cap \Schwartz\) is an equicontinuous set of order \(0\) then \(Q_*\subseteq B_M\cap \Schwartz\) is equicontinuous.
\end{prop}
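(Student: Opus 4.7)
The plan is to argue by contradiction. Suppose $Q_*$ fails to be equicontinuous; by \eqref{HF-UNI} and Lemma~\ref{l:COM}, there exist $\eps_0 > 0$ and sequences $q_n \in Q_* \subseteq B_M$ and $N_n \to \infty$ with $\|q_{n, > N_n}\|_{L^2}^2 \geq \eps_0^2 M$, together with $\tilde q_n \in Q$ satisfying $a(\cdot; q_n) \equiv a(\cdot; \tilde q_n)$. Fix $\eps \ll \eps_0$ (to be determined), and use equicontinuity of $Q$ to produce $N^* = N^*(\eps)$ with $\|\tilde q_{n, > N^*}\|_{L^2}^2 \leq \eps^2 M$. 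A standard pigeonhole over dyadic frequency blocks then yields, for each $n$ large, a scale $N_1 \gg N^*/\eps^2$ satisfying the gap property $\|P_{\eps^3 N_1 < \cdot \leq \eps^{-3} N_1} q_n\|_{L^2}^2 \leq \eps^2 M$ while still $\|q_{n, > \eps^{-3} N_1}\|_{L^2}^2 \geq \eps_0^2 M$.

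The key step is to apply the trace formula (Proposition~\ref{p:WindingAlternative}) at $\theta = \pi/2$, which for the order-$0$ potential $q_n$ reads $M(q_n) = -\tfrac{2}{i} \int_0^{i\infty} \tfrac{a'(k;q_n)}{a(k;q_n)}\,dk$, and to split this integral at $iN_1$. By Lemma~\ref{4.3}, $|a(i\kappa; q_n)| \gtrsim_M 1$ on $[0,\infty)$, so the continuous branch of $\log a(i\cdot; q_n)$ normalized at the origin is defined throughout and attains the exact value $-iM(q_n)/2$ at infinity. For the tail integral, Proposition~\ref{p:Winding}(i) applied to $\tilde q_n$ keeps $a(i\kappa; q_n)$ within $\lesssim_M \eps^2$ of $e^{-iM(\tilde q_n)/2}$ for all $\kappa \geq N_1$, limiting its contribution to $\lesssim_M \eps$. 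For the head integral, Proposition~\ref{p:Factor} gives $a(k; q_n) = a(k; q_{n, \leq N_1})\, a(k; q_{n, > N_1}) + O_M(\eps^2)$ on the imaginary axis; Proposition~\ref{p:Winding}(ii) with $N = N_1/\eps^3$ (its hypothesis supplied by the gap) shows $|a(i\kappa; q_{n, > N_1}) - 1| \lesssim_M \eps^2 \log(1/\eps)$ uniformly for $\kappa \in [0, N_1]$, while Proposition~\ref{p:Winding}(i) with $N = \eps^2 N_1$ shows $|a(i\kappa; q_{n, \leq N_1}) - e^{-iM(q_{n, \leq N_1})/2}| \lesssim_M \eps^2$ uniformly for $\kappa \geq N_1$. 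Combined with Lemma~\ref{4.3} and the factorization, these uniform approximations ensure that $a(i\cdot; q_{n, \leq N_1})$ is non-vanishing on $[0, \infty)$ and that $a(i\cdot; q_{n, > N_1})$ is non-vanishing on $[0, N_1]$; the continuous branches of their logarithms (each normalized at the origin) are therefore defined on $[0, N_1]$ and decompose the continuous $\log a(i\cdot; q_n)$ there without any $2\pi i \Z$ ambiguity, since all three branches agree at $\kappa = 0$.

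Extending the continuous branch of $\log a(i\cdot; q_{n, \leq N_1})$ to infinity and applying the trace formula to $q_{n, \leq N_1}$ evaluates its limit at $\kappa = \infty$ as $-\tfrac{i}{2}M(q_{n, \leq N_1}) + 2\pi i\ell_\leq$, where $\ell_\leq \geq 0$ counts the zeros of $a(\cdot; q_{n, \leq N_1})$ in $\{\pi/2 < \arg k < \pi\}$; since $a(i\cdot; q_{n, \leq N_1})$ is approximately constant on $[N_1, \infty]$, the same value (up to an error $\lesssim_M \eps$) is already attained at $\kappa = N_1$. Assembling the two halves of the trace formula yields $M(q_n) = M(q_{n, \leq N_1}) - 4\pi \ell_\leq + O_M(\eps \log^{1/2}(1/\eps))$. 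Approximate $L^2$-orthogonality (with cross-term controlled by the gap condition) gives $M(q_n) = M(q_{n, \leq N_1}) + M(q_{n, > N_1}) + O(\eps^2 M)$, hence $M(q_{n, > N_1}) = -4\pi \ell_\leq + O_M(\eps \log^{1/2}(1/\eps))$. Non-negativity of both sides forces $\ell_\leq = 0$ and $M(q_{n, > N_1}) = O_M(\eps \log^{1/2}(1/\eps))$ for $\eps$ small, contradicting $M(q_{n, > N_1}) \geq \eps_0^2 M$. The main obstacle is to keep the continuous logarithm branches on the three factors globally consistent so that the final identity retains the integer $4\pi \ell_\leq$ \emph{exactly} (rather than only modulo $4\pi$); this is precisely what the order-$0$ hypothesis secures via Lemma~\ref{4.3}, and is what allows positivity to close the argument.
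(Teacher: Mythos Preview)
Your proposal is correct and follows essentially the same strategy as the paper's proof: the same pigeonhole to locate a gap scale $N_1$, the same use of Proposition~\ref{p:Winding} (both parts), Proposition~\ref{p:Factor}, Lemma~\ref{4.3}, and Proposition~\ref{p:WindingAlternative}, and the same positivity observation at the end. The only differences are cosmetic. First, the paper argues directly and produces an explicit $N_*$, while you frame the argument as a contradiction; this costs nothing. Second, the paper compresses your three-branch logarithm decomposition into a single estimate (their \eqref{lock}): rather than tracking $\log a(i\kappa;q_{n,\leq N_1})$, $\log a(i\kappa;q_{n,>N_1})$, and $\log a(i\kappa;q_n)$ separately, they show $|a(i\kappa;q)-a(i\kappa;q_{\leq N_1})|\lesssim_M\eps^2\log(1/\eps)$ uniformly in $\kappa>0$ and then read off the conclusion via a Rouch\'e-type comparison of $\int_0^\infty a'/a$. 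In particular, the paper never needs to name the integer $\ell_\leq$ explicitly, since the inequality $\|q\|_{L^2}^2-\|q_{\leq N_1}\|_{L^2}^2\geq 0$ (which follows simply from $|m_{\leq N_1}|\leq 1$) together with $\ell_\leq\geq 0$ already forces both to be $O_M(\eps^2\log(1/\eps))$. Your route makes this step more visible but is otherwise equivalent.
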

\begin{proof}
As \(Q\) is equicontinuous, given a dyadic \(0<\epsilon\leq\frac14\) sufficiently small (depending only on \(M\)), we may find \(N_0\in 2^\Z\) so that \(Q\) satisfies \eqref{HF-UNI}.

From Corollary~\ref{c:NHWU}, the definition \eqref{Qs} of \(Q_*\), and Lemma~\ref{l:COM}, we may bound
\begin{equation}\label{STEP 1a}
\sup_{q\in Q_*}\Bigl|a(i\kappa;q) - e^{-\frac i2\|q\|_{L^2}^2}\Bigr| = \sup_{q\in Q}\Bigl|a(i\kappa;q) - e^{-\frac i2\|q\|_{L^2}^2}\Bigr|\lesssim_M \epsilon^2 \quad\text{for all \(\kappa\geq \tfrac {N_0}{\epsilon^2}\).}
\end{equation}

To prove the proposition, we will show that there exists \(N_* = N_*(\epsilon,N_0,M)\in 2^\Z\) so that
\begin{equation}\label{Target}
\sup_{q\in Q_*}\|q_{>N_*}\|_{L^2}^2\lesssim_M\epsilon^2\log\bigl(\tfrac1\epsilon\bigr).
\end{equation}

For each \(q\in Q_*\), the pigeonhole principle ensures that there exists \(N_1 = N_1(q,\epsilon)\in 2^\Z\) satisfying
\begin{equation}\label{Pigeon}
\tfrac {N_0}{\epsilon^4}\leq N_1\leq \tfrac {N_0}{\epsilon^4}\epsilon^{-6\epsilon^{-2}}
\end{equation}
so that \(q\) satisfies \eqref{Gap} with \(N = N_1\). We then claim that
\begin{equation}\label{lock}
\Bigl|a(i\kappa;q) - a(i\kappa;q_{\leq N_1})\Bigr|\lesssim_M\epsilon^2\log\bigl(\tfrac1\epsilon\bigr) \quad\text{uniformly for $\kappa>0$}.
\end{equation}

As a first step toward proving \eqref{lock}, we employ each part of Proposition~\ref{p:Winding}.  Applying part (i) to \(q_{\leq N_1}\) with \(N =\epsilon^3 N_1\) yields
\begin{equation}\label{No low change}
\Bigl|a(i\kappa;q_{\leq N_1}) - e^{-\frac i2\|q_{\leq N_1}\|_{L^2}^2}\Bigr|\lesssim_M \epsilon^2\qtq{uniformly for}\kappa\geq\epsilon N_1,
\end{equation}
while applying part (ii) to \(q_{>N_1}\) with \(N = \frac{N_1}{\epsilon^3}\) yields
\begin{equation}\label{No high change}
\Bigl|a(i\kappa;q_{> N_1}) - 1\Bigr|\lesssim_M \epsilon^2\log\bigl(\tfrac1\epsilon\bigr)\qtq{uniformly for}0<\kappa\leq\tfrac{N_1}\epsilon.
\end{equation}

Let us first prove \eqref{lock} in the regime \(\kappa\geq\epsilon N_1\):  We estimate
\begin{align*}
\Bigl|a(i\kappa;q) - a(i\kappa;q_{\leq N_1})\Bigr|&\leq \Bigl|a(i\kappa;q) - a(iN_1;q)\Bigr| + \Bigl|a(i\kappa;q_{\leq N_1}) - a(iN_1;q_{\leq N_1})\Bigr| \\
&\qquad + \Bigl|a(iN_1;q) - a(iN_1;q_{\leq N_1})a(iN_1;q_{>N_1})\Bigr|\\
&\qquad + \Bigl|a(iN_1;q_{\leq N_1})\Bigr|\Bigl|a(iN_1;q_{>N_1}) - 1\Bigr|\\
&\lesssim_M \epsilon^2\log\bigl(\tfrac1\epsilon\bigr)
\end{align*}
by using \eqref{STEP 1a} and \eqref{No low change} on the first line, \eqref{Factor} on the second, and \eqref{No high change} on the third. 

To complete the proof of \eqref{lock}, we argue as follows:  If \(0<\kappa\leq\frac{N_1}\epsilon\), then \eqref{Factor} and \eqref{No high change} guarantee that
\begin{align*}
\Bigl|a(i\kappa;q) - a(i\kappa;q_{\leq N_1})\Bigr|&\leq\Bigl|a(i\kappa;q) - a(i\kappa;q_{\leq N_1})a(i\kappa;q_{>N_1})\Bigr|\\
&\quad  + \Bigl|a(i\kappa;q_{\leq N_1})\Bigr|\Bigl|a(i\kappa;q_{>N_1}) - 1\Bigr|\\
&\lesssim_M \epsilon^2\log\bigl(\tfrac1\epsilon\bigr).
\end{align*}

By assumption, \(a(k;q)\) has no zeros in \(\Sigma\); thus \eqref{LB} holds.  Combining this with \eqref{lock} shows that for $\eps$ sufficiently small (depending only on $M$),
\[
\bigl|a(i\kappa;q)\bigr|\gtrsim_M1 \qtq{and} \bigl|a(i\kappa;q_{\leq N})\bigr|\gtrsim_M1.
\]

Combining this with \eqref{lock} yields (mirroring Rouch\'e's Theorem) 
\[
 \limsup_{\kappa\to\infty} \biggl|\int_0^\kappa \tfrac{a'(i\vk;q)}{a(i\vk;q)}\,d\vk - \int_0^\kappa \tfrac{a'(i\vk;q_{\leq N})}{a(i\vk;q_{\leq N})}\,d\vk\biggr|\lesssim_M\epsilon^2\log\bigl(\tfrac1\epsilon\bigr),
\]
provided $\eps$ is sufficiently small (depending only on $M$).  Thus, using \eqref{WindingAlternative} we deduce
\begin{align*}
\|q_{>4N_1}\|_{L^2}^2
	\leq \|q\|_{L^2}^2 - \|q_{\leq N_1}\|_{L^2}^2
	\lesssim_M \epsilon^2\log\bigl(\tfrac1\epsilon\bigr).
\end{align*}
This proves that \eqref{Target} holds with the choice \(N_* = 4N_0\epsilon^{-4}e^{-6\epsilon^{-2}}\).
\end{proof}

\section{The B\"acklund transform}\label{S:6}

We begin this section by describing the B\"acklund transform for \eqref{DNLS}, closely following~\cite{MR3702542}.  The ultimate purpose of this section is to determine the behavior of equicontinuous sets under this transformation; this is the subject of Proposition~\ref{p:Und}.

If \(q\in \Schwartz\) and \(a(k;q)\) has a zero at \(k=\zero\in \C^+\), Lemma~\ref{l:EF} ensures that we can find a non-zero solution \(\psi = (\psi_1,\psi_2)^T\in H^1\) of \eqref{e:psi}.  In view of Remark~\ref{R}, $\psi$ is unique up to scalar multiples and nowhere vanishing.  We then define
\begin{equation}\label{Backlund-IP}
\bd(x;z) = \sqrt\zero |\psi_1(x)|^2 + \sqrt{\bar \zero} |\psi_2(x)|^2
\end{equation}
and observe that, since $z\in \C^+$, 
\begin{equation}\label{d-lower}
|\bd(x;z)| \geq |\Re\sqrt\zero| |\psi(x)|^2 >0.
\end{equation}
As a consequence, we may define
\begin{equation}\label{BT-parts}
\bG(x;z) = \frac{\bar\bd(x;z)}{\bd(x;z)}\qtq{and}\bS(x;z) = 4\Im\zero\,\frac{\psi_1(x)\bar\psi_2(x)}{\bd(x;z)}.
\end{equation}
The corresponding B\"acklund transform of \(q\) is then defined to be
\begin{equation}\label{Backlund}
\bB(q;z) = \bG^2q + \bG\bS.
\end{equation}
Note that $\bG$, $\bS$, and $\bB$ depend on $z$ and $q$, but not on $\psi$, due to homogeneity and Remark~\ref{R}.

\begin{lem}\label{l:Backlund}
Let \(q\in \Schwartz\) be such that \(a(k;q)\) has a zero at \(k = \zero\in \C^+\). Then $\bG\in C^\infty$ and $\bS\in \Schwartz$ satisfy
\begin{alignat}{5}
\|\bG\|_{L^\infty} &= 1&\qtq{and}&&\|\bG'\|_{L^2}&\lesssim |\Im \sqrt\zero|\Bigl(\|\bB(q)\|_{L^2} + \|q\|_{L^2}\Bigr),\label{Gmu}
\\
\|\bS\|_{L^\infty}&\leq4|\Im\sqrt\zero|&\qtq{and}&&\|\bS'\|_{L^2}&\lesssim |\zero|\Bigl(\|\bB(q)\|_{L^2} + \|q\|_{L^2}\Bigr).\label{Smu}
\end{alignat}

Moreover, \(\bB(q;z)\in \Schwartz\) and for any \(k\in \overline{\C^+}\backslash\{\zero\}\) we have
\begin{equation}\label{RemZeros}
a(k;\bB(q;z)) = \tfrac{\zero}{\bar\zero}\tfrac{k - \bar \zero}{k - \zero}a(k;q).
\end{equation}
\end{lem}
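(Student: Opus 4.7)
The pointwise bounds $\|\bG\|_{L^\infty}=1$ and $\|\bS\|_{L^\infty}\leq 4|\Im\sqrt z|$ are essentially immediate. Writing $\sqrt z=\alpha+i\beta$ with $\alpha,\beta>0$ (so $\Im z=2\alpha\beta$), we decompose
\[
\bd=\alpha(|\psi_1|^2+|\psi_2|^2)+i\beta(|\psi_1|^2-|\psi_2|^2),
\]
so $|\bd|\geq \Re\bd=\alpha(|\psi_1|^2+|\psi_2|^2)\geq 2\alpha|\psi_1\psi_2|$ by AM--GM. Combined with $|\bS|=4|\Im z||\psi_1\psi_2|/|\bd|$, this gives $|\bS|\leq 4\beta=4|\Im\sqrt z|$, while $|\bG|=|\bar\bd|/|\bd|=1$ is trivial given $\bd\neq 0$ from \eqref{d-lower}.

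For the derivatives I would differentiate using the ODE \eqref{e:psi}. A direct calculation yields
\[
\bd'=2\Im z\bigl[\sqrt z|\psi_1|^2-\sqrt{\bar z}|\psi_2|^2+iq\bar\psi_1\psi_2\bigr],
\]
and a similar identity holds for $(\psi_1\bar\psi_2)'$. Since $|\bG|^2=1$, we have $\bG'=-2i\bG\,\Im(\bd'/\bd)$. Using $|\psi_1\psi_2|/|\bd|=|\bS|/(4|\Im z|)$ and $|\psi|^2/|\bd|\leq 1/\alpha$, one obtains the pointwise bounds $|\bG'|\lesssim |\bS|^2+|q||\bS|$ and $|\bS'|\lesssim |z|(|\bS|+|q|)+|q||\bS|^2$. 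Solving the definition $\bB(q)=\bG^2q+\bG\bS$ for $\bS$ gives $\bS=\bar\bG\bB(q)-\bG q$ (using $\bG^{-1}=\bar\bG$), whence $\|\bS\|_{L^2}\leq \|\bB(q)\|_{L^2}+\|q\|_{L^2}$. Combined with the $L^\infty$ bound on $\bS$, this yields the claimed $L^2$ estimates on $\bG'$ and $\bS'$.

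For $\bB(q)\in\Schwartz$, smoothness follows by bootstrapping \eqref{e:psi} since $\bd$ is locally bounded away from zero. To obtain rapid decay at $x\to+\infty$, I would pass to $u_1=\psi_1 e^{izx}$, $u_2=\psi_2 e^{-izx}$, which reduces \eqref{e:psi} to a Volterra system with Schwartz forcing; this shows $u_2\to u_2(\infty)\neq 0$ while $u_1$ is Schwartz times $e^{-2\Im z\,x}$. Consequently $\bd\sim \sqrt{\bar z}|u_2(\infty)|^2 e^{-2\Im z\,x}$ and $\psi_1\bar\psi_2\sim(\text{Schwartz})\cdot e^{-2\Im z\,x}$, so $\bS$ has Schwartz decay at $+\infty$; the analogous argument handles $-\infty$. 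As $\bG$ is smooth with bounded derivatives of all orders, $\bB(q)=\bG^2q+\bG\bS\in\Schwartz$ whenever $q\in\Schwartz$.

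The identity \eqref{RemZeros} is the main obstacle and the algebraic heart of the B\"acklund transformation. Following~\cite{MR3702542}, I would introduce the dressing matrix
\[
D(x;k)=I-\tfrac{z-\bar z}{k-\bar z}\,\Pi(x),
\]
where $\Pi(x)$ is a rank-one projection built from $\psi(x)$, so that $\det D(x;k)=(k-z)/(k-\bar z)$ and $D$ intertwines the Lax pencils $L(\sqrt k;q)$ and $L(\sqrt k;\bB(q))$ in the sense that $D\Psi$ solves the ODE associated with $\bB(q)$ whenever $\Psi$ solves the one for $q$. Evaluating $D\Psi^\pm$ against the boundary conditions \eqref{Psi minus} produces the Jost matrices of $\bB(q)$ up to $x$-independent right factors $C^\pm(k)$. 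The ratio $C^+(k)/C^-(k)$ translates the factor $\det D=(k-z)/(k-\bar z)$ together with the boundary phases picked up by $D(\pm\infty;k)$ into the scattering identity $a(k;\bB(q))=\frac{z}{\bar z}\frac{k-\bar z}{k-z}a(k;q)$. The hardest step is the careful bookkeeping that verifies the precise prefactor $z/\bar z$: this requires tracking the Kaup--Newell sign conventions and the chosen normalization of $\Psi^\pm$ against the asymptotic behavior of $D$ at $\pm\infty$.
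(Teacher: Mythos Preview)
Your pointwise bounds and your route to the $L^2$ derivative estimates are correct and close in spirit to the paper. The paper, however, does not stop at inequalities: it derives the \emph{exact} identities
\[
\bG' = \tfrac1{2i}\bigl[\bar\bS\,\bB(q) + \bS\,\bar q\bigr],\qquad \bS' = \tfrac{2|z|}{i}\bigl[\bB(q) - q\bigr],
\]
from which \eqref{Gmu}--\eqref{Smu} are immediate. Your pointwise inequalities $|\bG'|\lesssim |\bS|^2+|q||\bS|$, $|\bS'|\lesssim |z|(|\bS|+|q|)+|q||\bS|^2$, together with $\bS=\bar\bG\bB(q)-\bG q$, do give the same $L^2$ bounds, so for this part the difference is cosmetic. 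Your Schwartz argument via Volterra asymptotics is likewise essentially the paper's.

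The genuine gap is in \eqref{RemZeros}. Your proposed dressing matrix $D(x;k)=I-\frac{z-\bar z}{k-\bar z}\Pi(x)$ with a rank-one projection $\Pi$ is the standard AKNS form; it does \emph{not} intertwine the Kaup--Newell pencils. The Kaup--Newell Lax operator \eqref{L&P} has $\lambda$ (not $\lambda^2=k$) in the off-diagonal entries, and the correct Darboux matrix is the one the paper writes explicitly:
\[
P(x;\lambda)=\frac{\mu}{\bar\mu}\frac1{\lambda^2-\mu^2}\begin{bmatrix}\lambda^2\bG-|\mu|^2 & -\tfrac i2\lambda\bS\\ -\tfrac i2\lambda\bar\bS & \lambda^2\bar\bG-|\mu|^2\end{bmatrix},\qquad \det P=\frac{\mu^2}{\bar\mu^2}\frac{\lambda^2-\bar\mu^2}{\lambda^2-\mu^2}.
\]
This is quadratic in $\lambda$ with $\lambda$ present off-diagonal, so it is not a rank-one perturbation in $k$; in particular your stated determinant $(k-z)/(k-\bar z)$ misses the $z/\bar z$ factor (which you then hope to recover from boundary phases, but in fact it sits in $\det P$ itself). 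Moreover, verifying the intertwining $\widetilde QP=P'+i\lambda^2[\sigma_3,P]+PQ$ (Remark~\ref{R:2}) for this $P$ uses precisely the exact identities for $\bG'$ and $\bS'$ above, which your inequality-based approach does not supply. So while your overall strategy (conjugate the Jost solutions by a Darboux matrix and read off \eqref{RemZeros} from the Wronskian formula) is the right one, the specific matrix you wrote will not work, and you are missing the algebraic inputs needed to check that the correct $P$ does.
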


\begin{proof}
Given $\lambda^2\in\C^+$, Levinson's Theorem \cite[\S3.8]{MR0069338} guarantees the existence of solutions $\Psi^\pm(x;\lambda)$ to
\begin{equation}\label{Levin-psi}
\partial_x \Psi^\pm = -i\sigma_3\lambda^2\Psi^\pm + \lambda\begin{bmatrix}0&q\\-\bar q&0\end{bmatrix}\Psi^\pm
\qtq{with} \lim_{x\to\pm\infty} \, \Psi^\pm(x) e^{i\lambda^2x\sigma_3}  = \Id.
\end{equation}

From the boundary conditions, we see that the column $\Psi_1^-(x)$ decays exponentially as $x\to-\infty$, while $\Psi_2^-(x)$ grows exponentially.  Analogously, $\Psi_1^+(x)$ grows and $\Psi_2^+(x)$ decays as $x\to+\infty$.  Note that the decaying Jost solutions are unique; the growing solutions are not (one may add any multiple of the decaying column).  While this ambiguity prevents one from defining $\tb(\lambda;q)$ for $\lambda^2\notin\R$, it does not affect $\ta(\lambda;q)$ since we always have the Wronskian relation
\begin{equation}\label{a from Wron}
\ta(\lambda; q) = \det\begin{bmatrix}\Psi_1^-(x,\lambda) & \Psi_2^+(x,\lambda)\end{bmatrix},
\end{equation}
which only involves the decaying solutions.

Let $\mu=\sqrt z\in\C^+$.  When $\lambda=\mu$, the ODE in \eqref{Levin-psi} is precisely that satisfied by $\psi$.  On the other hand, the columns of $\Psi^-(x;\mu)$ (and likewise those of $\Psi^+(x;\mu)$) form a basis of solutions to this linear problem.  As $\psi$ is non-zero and square integrable, we deduce that
\[
\psi(x) = C_-\Psi_1^-(x;\mu) = C_+\Psi_2^+(x;\mu),
\]
for some non-zero constants \(C_\pm\in \C\). Using the equation \eqref{e:psi}, we may write
\begin{align*}
e^{-i\mu^2x}\psi(x) &= \begin{bmatrix}0\\C_+\end{bmatrix} - \mu\int_x^\infty \begin{bmatrix}0&e^{-2i\mu^2(x-y)}q(y)\\-\bar q(y)&0\end{bmatrix}e^{-i\mu^2y}\psi(y)\,dy,\\
e^{i\mu^2x}\psi(x) &= \begin{bmatrix}C_-\\0\end{bmatrix} + \mu\int_{-\infty}^x \begin{bmatrix}0&q(y)\\-e^{2i\mu^2(x-y)}\bar q(y)&0\end{bmatrix}e^{i\mu^2y}\psi(y)\,dy.
\end{align*}
As \(q\in \Schwartz\), we may use these expressions to show that, for any \(\eta\in C^\infty\) that is supported on \([-1,\infty)\) and identically \(1\) on \([1,\infty)\), we have
\[
\eta(x)\Biggl(e^{-i\mu^2x}\psi(x) - \begin{bmatrix}0\\C_+\end{bmatrix}\Biggr),\bigl[1 - \eta(x)\bigr]\Biggl(e^{i\mu^2x}\psi(x) - \begin{bmatrix}C_-\\0\end{bmatrix}\Biggr)\in\Schwartz.
\]
This suffices to show that \(\bG\in C^\infty\), \(\bS\in \Schwartz\), and
\begin{equation}\label{Glim}
\bG(x) \to \begin{cases}\frac{\mu}{\bar\mu}&\qtq{as}x\to+\infty,\medskip\\\frac{\bar\mu}{\mu}&\qtq{as}x\to-\infty.\end{cases}
\end{equation}

From \eqref{d-lower} and \eqref{BT-parts}, for all \(x\in \R\) we have
\[
|\bG| = 1\qtq{and}|\bS|\leq 4|\Im\sqrt\zero|.
\]
Using \eqref{e:psi}, we compute
\[
\bd' = 2i\Im\zero\Bigl[q\bar \psi_1\psi_2 - i\sqrt\zero|\psi_1|^2 + i\sqrt{\bar\zero}|\psi_2|^2\Bigr] .
\]
Using also the identities 
\begin{align}\label{identities}
\bG^2 - 1 = -\tfrac{2i\Im\zero}{\bd^2}\bigl(|\psi_1|^4 - |\psi_2|^4\bigr) \qtq{and}
|\mu|^2(\bG+ \bar{\bG}) -\tfrac14|\bS|^2= 2\Re(\mu^2),
\end{align}
we obtain
\begin{equation}\label{dGS}
\bG' = \tfrac1{2i}\Bigr[\bar\bS \bB(q) + \bS\bar q\Bigr] \quad\text{and}\quad 
\bS' = \tfrac{2|\zero|}{i}\bigl[\bB(q) - q\bigr].
\end{equation}
The estimates \eqref{Gmu} and \eqref{Smu} follow immediately from these expressions.

Finally, we turn to proving \eqref{RemZeros}. Fixing \(\lambda\in \C\) so that \(\Im(\lambda^2)\geq0\) and \(\lambda^2\neq \mu^2\), we define 
\[
P(x;\lambda) = \frac\mu{\bar \mu}\frac1{\lambda^2 - \mu^2}\begin{bmatrix}\lambda^2\bG - |\mu|^2 & -\frac i2\lambda\bS\bigskip\\ -\frac i2\lambda\bar\bS & \lambda^2\bar\bG - |\mu|^2\end{bmatrix}.
\]
This matrix is invertible; indeed, by the second identity in \eqref{identities},
\begin{equation*}
\det P = \frac{\mu^2}{\bar \mu^2}\frac{\lambda^2 - \bar\mu^2}{\lambda^2 - \mu^2}.
\end{equation*}

Consider now \(\Upsilon^\pm(x;\lambda) := P(x;\lambda)\Psi^\pm(x;\lambda)\).  Applying Remark~\ref{R:2} with \eqref{dGS} and \eqref{Glim} we find that
\[
\p_x \Upsilon^\pm = -i\sigma_3\lambda^2 \Upsilon^\pm + \lambda\begin{bmatrix}0&\bB(q)\\-\bar\bB(q)&0\end{bmatrix}\Upsilon^\pm
	\qtq{with} \lim_{x\to\pm\infty} \, \Upsilon^\pm(x) e^{i\lambda^2x\sigma_3}  = \Id.
\]

Using this and \eqref{a from Wron} we obtain
\begin{equation*}
a(\lambda^2; \bB(q)) = \det\begin{bmatrix}\Upsilon_1^-(\lambda) &\Upsilon_2^+(\lambda)\end{bmatrix}=\det\begin{bmatrix}P\Psi_1^-(\lambda) &P \Psi_2^+(\lambda)\end{bmatrix}=(\det P) a(\lambda^2; q),
\end{equation*}
which proves \eqref{RemZeros}.
\end{proof}

Our next result will be useful in proving that the B\"acklund transform preserves equicontinuity.

\begin{lem}
Let \(q\in \Schwartz\) be such that \(a(k;q)\) has a zero at \(k = \zero\in \C^+\). Then
\begin{equation}\label{MassReduction}
\|\bB(q)\|_{L^2}^2 = \|q\|_{L^2}^2 -4\arg\zero,
\end{equation}
and there exists an absolute constant \(C>0\) so that if \(N\in 2^\Z\),
\begin{align}
\bigl\|P_{>N}[\bB(q)]\bigr\|_{L^2} &\leq \|q_{>\frac N8}\|_{L^2} + C \sqrt{\tfrac{|\zero|}N} \|q\|_{L^2}\Bigl(\sqrt {\tfrac{|\zero|}N} + \|q\|_{L^2}\Bigr),\label{Surgery1}\\
\|q_{>N}\|_{L^2} &\leq \bigl\|P_{>\frac N8}[\bB(q)]\bigr\|_{L^2} + C \sqrt{\tfrac{|\zero|}N} \|q\|_{L^2}\Bigl(\sqrt {\tfrac{|\zero|}N} + \|q\|_{L^2}\Bigr).\label{Surgery2}
\end{align}
\end{lem}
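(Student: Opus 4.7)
My plan is to treat the mass reduction first via the trace formula of Proposition~\ref{p:Tracey}, and then to handle both frequency-localized estimates via a common three-term decomposition built on $\bB(q)=\bG^2 q+\bG\bS$.

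For \eqref{MassReduction}, I would apply \eqref{Trace} to both $q$ and $\bB(q)$. By \eqref{RemZeros}, $a(k;\bB(q))$ equals $a(k;q)$ times the reciprocal of the Blaschke factor associated to $z$. Hence the zeros of $a(k;\bB(q))$ in $\C^+$ are precisely those of $a(k;q)$ with one copy of $z$ removed; meanwhile, because this reciprocal factor has modulus one on $\R$, we have $|a(s;\bB(q))|=|a(s;q)|$ for $s\in\R$, which by uniqueness of the Nevanlinna factorization \eqref{Tracey} forces $\mu_{\bB(q)}=\mu_q$. Subtracting the two trace formulas gives $\|q\|_{L^2}^2-\|\bB(q)\|_{L^2}^2=4\arg z$. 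In particular $\|\bB(q)\|_{L^2}\leq\|q\|_{L^2}$, which I will use freely below.

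For \eqref{Surgery1}, I would split
\[
P_{>N}[\bB(q)] = P_{>N}[\bG^2 q_{>N/8}] + P_{>N}[\bG^2 q_{\leq N/8}] + P_{>N}[\bG\bS].
\]
The first piece is bounded directly by $\|\bG^2\|_{L^\infty}\|q_{>N/8}\|_{L^2}=\|q_{>N/8}\|_{L^2}$, supplying the main term. Since $P_{>N}q_{\leq N/8}=0$ by Fourier support (the factor $1/8$ leaves the needed room), the second piece equals $[P_{>N},\bG^2]q_{\leq N/8}$; the key input is the abstract bound
\[
\|[P_{>N},f]g\|_{L^2}\leq C N^{-1/2}\|f'\|_{L^2}\|g\|_{L^2},
\]
which I would establish by writing the commutator as an integral of the kernel $k_{>N}$ of $P_{>N}$ against $f(y)-f(x)$, estimating $|f(y)-f(x)|\leq|y-x|^{1/2}\|f'\|_{L^2}$ via Cauchy--Schwarz, and closing with Young's inequality using $\|\,|y|^{1/2}k_{>N}\|_{L^1}\sim N^{-1/2}$. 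Combined with $\|(\bG^2)'\|_{L^2}\leq 2\|\bG'\|_{L^2}\lesssim|\Im\sqrt z|\|q\|_{L^2}\leq\sqrt{|z|}\|q\|_{L^2}$ from \eqref{Gmu} and the mass bound, this yields the contribution $\lesssim\sqrt{|z|/N}\|q\|_{L^2}^2$. For the third piece, Plancherel gives $\|P_{>N}[\bG\bS]\|_{L^2}\lesssim N^{-1}\|(\bG\bS)'\|_{L^2}$; the product rule with \eqref{Gmu} and \eqref{Smu} bounds $\|(\bG\bS)'\|_{L^2}\lesssim|\Im\sqrt z|^2\|q\|_{L^2}+|z|\|q\|_{L^2}\lesssim|z|\|q\|_{L^2}$, producing $\lesssim(|z|/N)\|q\|_{L^2}$. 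Summing the three pieces recovers \eqref{Surgery1}. For \eqref{Surgery2}, I would exploit $|\bG|=1$ to invert the defining relation as $q=\bar\bG^2\bB(q)-\bar\bG\bS$, and rerun the identical three-way decomposition with $\bar\bG^2$ in place of $\bG^2$ and $\bB(q)$ in place of $q$; the bounds \eqref{Gmu}--\eqref{Smu} are unaffected by complex conjugation, and the mass inequality is symmetric, so no new input is needed.

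The principal technical obstacle throughout is that $\bG$ and $\bG^2$ are bounded but not in $L^2$: by \eqref{Glim} their limits at $\pm\infty$ have modulus one but generally differ in argument. This blocks the most obvious strategy of estimating $P_{>N/2}\bG^2$ in $L^2$ directly. The commutator bound above is designed precisely to bypass this difficulty, as it requires only $L^\infty$ control of $\bG^2$ and an $L^2$ bound on $(\bG^2)'$, both of which are supplied by \eqref{Gmu}.
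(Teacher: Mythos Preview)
Your treatment of \eqref{Surgery1} and \eqref{Surgery2} matches the paper's: the same three-term decomposition of $\bB(q)=\bG^2q+\bG\bS$ (resp.\ $q=\bar\bG^2\bB(q)-\bar\bG\bS$), with identical bounds on the first and third pieces. For the middle piece the paper instead writes $P_{>N}[\bG^2 q_{\leq N/8}]=P_{>N}\bigl[(P_{>N/8}\bG^2)\,q_{\leq N/8}\bigr]$ by Fourier-support considerations and then uses a Bernstein-type bound $\|P_{>N/8}\bG^2\|_{L^\infty}\lesssim N^{-1/2}\|(\bG^2)'\|_{L^2}$; your commutator estimate is an equivalent device (one small remark: since $P_{>N}=I-P_{\leq N}$, the convolution kernel you should feed into your Young's-inequality step is the Schwartz kernel of $P_{\leq N}$, via $[P_{>N},f]=-[P_{\leq N},f]$).

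Your route to \eqref{MassReduction} is genuinely different. The paper derives the pointwise identity $|q|^2-|\bB(q)|^2=-2i\bar\bG\bG'$ from \eqref{dGS} and the second relation in \eqref{identities}, observes that $\bG$ stays in $\C\setminus(-\infty,0]$ (because $\bd$ lies in the open right half-plane), and integrates using the limits \eqref{Glim}. This is self-contained and produces a local density identity as a bonus. Your argument via the trace formula \eqref{Trace} and \eqref{RemZeros} is also valid and conceptually clean, but one step is underjustified as written: equality of boundary moduli $|a(s;\bB(q))|=|a(s;q)|$ on $\R$ fixes only the absolutely continuous part of $\mu$. To conclude $\mu_{\bB(q)}=\mu_q$ you must also match the singular inner factors; this does follow, since the multiplicative factor in \eqref{RemZeros} is a single Blaschke factor and hence, by uniqueness of the inner/outer factorization underlying \eqref{Tracey}, cannot alter the singular inner component.
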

\begin{proof}
A quick computation using the second identity in \eqref{identities} and \eqref{dGS} yields
\begin{align}\label{q Bq}
|q|^2-|\bB(q)|^2 = -2i \bar{\bG} \bG'.
\end{align}
As $z\in \C^+$, formula \eqref{Backlund-IP} shows that $\bd(x;z)$ lies in the open right-half plane, and so $\bG(x; z)\in \C\setminus (-\infty, 0]$.
Thus, integrating \eqref{q Bq} and using \eqref{Glim}, we derive the identity \eqref{MassReduction}.

To prove \eqref{Surgery1}, we decompose
\[
P_{>N}[\bB(q)] = P_{>N}\bigl[\bG^2q_{>\frac N8}\bigr] + P_{>N}\bigl[\bG^2q_{\leq \frac N8}\bigr] + P_{>N}\bigl[\bG\bS\bigr].
\]
For the first term we use \eqref{Gmu} to bound
\[
\bigl\|P_{>N}\bigl[\bG^2 q_{>\frac N8}\bigr]\bigr\|_{L^2}\leq \bigl\|q_{>\frac N8}\bigr\|_{L^2}.
\]
For the second, by first considering the Fourier support and then applying \eqref{Gmu} and \eqref{MassReduction} with Bernstein's inequality, we get
\begin{align*}
\bigl\|P_{>N}\bigl[\bG^2q_{\leq \frac N8}\bigr]\bigr\|_{L^2} &= \bigl\|q_{\leq \frac N8}P_{>\frac N8}\bigl[\bG^2\bigl]\bigr\|_{L^2}\\
&\lesssim N^{-\frac12}\|\bG\|_{L^\infty}\|\bG'\|_{L^2}\|q\|_{L^2}\\
&\lesssim N^{-\frac12}|\zero|^{\frac12}\|q\|_{L^2}^2.
\end{align*}
For the final term we use \eqref{Gmu}, \eqref{Smu}, and \eqref{MassReduction} to bound
\[
\|P_{>N}\bigl[\bG\bS\bigr]\|_{L^2}\lesssim \tfrac1N\Bigl(\|\bG\|_{L^\infty}\|\bS'\|_{L^2} + \|\bG'\|_{L^2}\|\bS\|_{L^\infty}\Bigr)\lesssim\tfrac1N |\zero| \|q\|_{L^2}.
\]

The proof of \eqref{Surgery2} is essentially identical to that of \eqref{Surgery1}, once we have observed that \eqref{Backlund} may be rewritten in the form
\[
q = \bar\bG^2\bB(q) - \bar\bG\bS. \qedhere
\]
\end{proof}

We are now ready to prove that the B\"acklund transform preserves equicontinuity.  Let us first define a set-valued B\"acklund transform.  For a set $Q\subseteq \Schwartz$, we define
\begin{align*}
\cB(Q)=\bigl\{\bB(q;z): q\in Q, \ z\in \Sigma, \text{ and } a(z;q)=0\bigr\},
\end{align*}
where $\Sigma$ is as defined in \eqref{Sigma}.

\begin{prop}\label{p:Und}
Let \(M>0\) and \(Q\subseteq B_M\cap \Schwartz\) be a set of order \( J\geq 1\) in the sense of Definition~\ref{d:Order}. Taking \(Q_*\) to be defined as in \eqref{Qs}, we have the following properties:
\begin{itemize}
\item[(i)] \(\cB(Q)\subseteq \cB(Q_*) \subseteq \cB(Q)_*\) are subsets of \(B_M\cap \Schwartz\) of order \( J-1\).
\item[(ii)] If $Q$ is equicontinuous, then \(\cB(Q)\) is equicontinuous.
\item[(iii)] If $Q$ and \(\cB(Q)_*\) are equicontinuous, then \(Q_*\) is equicontinuous.
\end{itemize}
\end{prop}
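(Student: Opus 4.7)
The inclusion $\cB(Q)\subseteq \cB(Q_*)$ is immediate, since $Q\subseteq Q_*$ (take $\tilde q=q$ in \eqref{QsI}). For $\cB(Q_*)\subseteq \cB(Q)_*$, suppose $q\in Q_*$ has witness $\tilde q\in Q$ and $z\in\Sigma$ is a zero of $a(k;q)\equiv a(k;\tilde q)$; applying \eqref{RemZeros} to both $q$ and $\tilde q$ at this common zero yields
\[
a(k;\bB(q;z))=\tfrac{z}{\bar z}\tfrac{k-\bar z}{k-z}\,a(k;q)=\tfrac{z}{\bar z}\tfrac{k-\bar z}{k-z}\,a(k;\tilde q)=a(k;\bB(\tilde q;z)),
\]
so $\bB(q;z)\in\cB(Q)_*$. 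Schwartz-class membership is contained in Lemma~\ref{l:Backlund}, and \eqref{MassReduction} combined with $\arg z>0$ and $Q_*\subseteq B_M$ (Lemma~\ref{l:COM}) gives $\|\bB(q;z)\|_{L^2}^2\leq M$. Finally, the multiplier in \eqref{RemZeros} cancels exactly one zero at $z\in\Sigma$ and places a new zero at $\bar z\in\C^-$, so the count in $\Sigma$ drops by precisely one, confirming order $J-1$.

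\textbf{A uniform bound on the removed zero.} The technical heart of parts (ii) and (iii) is an a priori bound on $|z|$ for $z\in\Sigma$ with $a(z;q)=0$, as $q$ ranges over the equicontinuous family $Q$. Choosing $\eps_0>0$ small enough (depending only on $M$) that the implicit constant in Proposition~\ref{p:Zeros} forces $C(M)\eps_0<\pi/4$, the alternative $\arg z\lesssim_M\eps_0$ in \eqref{No high zeros} becomes incompatible with $z\in\Sigma$. Thus if $Q$ satisfies \eqref{HF-UNI} at scale $N_0$ with parameter $\eps_0$, every zero $z\in\Sigma$ of $a(k;q)$ with $q\in Q$ satisfies $|z|\leq K:=N_0/\eps_0^2$.

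\textbf{Part (ii).} Fix $\bB(q;z)\in\cB(Q)$. The estimate \eqref{Surgery1}, together with $|z|\leq K$ and $\|q\|_{L^2}^2\leq M$, gives
\[
\|P_{>N}\bB(q;z)\|_{L^2}\leq \|q_{>N/8}\|_{L^2} + C\sqrt{\tfrac{K}{N}}\sqrt{M}\Bigl(\sqrt{\tfrac{K}{N}}+\sqrt{M}\Bigr)
\]
uniformly. Given $\eps>0$, the first term is at most $\eps\sqrt M$ for all $q\in Q$ once $N$ is large enough, by equicontinuity of $Q$; the second term is at most $\eps\sqrt M$ once $N$ is large enough relative to $K$ and $M$. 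This proves $\cB(Q)$ is equicontinuous.

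\textbf{Part (iii) and the main obstacle.} Given $q_*\in Q_*$ with witness $\tilde q\in Q$, the hypothesis $J\geq 1$ furnishes a zero $z\in\Sigma$ of $a(k;\tilde q)=a(k;q_*)$, and by part (i) $\bB(q_*;z)\in\cB(Q_*)\subseteq\cB(Q)_*$. Applying \eqref{Surgery2} yields
\[
\|q_{*,>N}\|_{L^2}\leq \|P_{>N/8}\bB(q_*;z)\|_{L^2} + C\sqrt{\tfrac{|z|}{N}}\sqrt{M}\Bigl(\sqrt{\tfrac{|z|}{N}}+\sqrt{M}\Bigr),
\]
where the first term is uniformly small for $N$ large by the assumed equicontinuity of $\cB(Q)_*$. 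The main subtlety lies in the error term: since we have not yet established equicontinuity of $Q_*$, we cannot apply Proposition~\ref{p:Zeros} directly to $q_*$ to bound $|z|$. The essential point is that $z$ is simultaneously a zero of $a(k;\tilde q)$ with $\tilde q\in Q$, which allows us to invoke the uniform bound $|z|\leq K$ established above using the equicontinuity of $Q$. Both equicontinuity hypotheses are therefore indispensable and play distinct roles: that of $\cB(Q)_*$ controls the transformed potential, while that of $Q$ pins down the location of the removed zero.
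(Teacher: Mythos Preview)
Your proof is correct and follows essentially the same approach as the paper: both use \eqref{RemZeros} for the inclusions and order count in (i), and then combine the uniform bound on $|z|$ from Proposition~\ref{p:Zeros} (applied to the equicontinuous set $Q$, exactly as in \eqref{Zombie}) with \eqref{Surgery1} for (ii) and \eqref{Surgery2} for (iii). Your presentation is somewhat more explicit --- you spell out the witness argument for $\cB(Q_*)\subseteq\cB(Q)_*$ and articulate clearly why the zero bound must come from $Q$ rather than $Q_*$ --- but the logic is identical to the paper's.
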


\begin{proof}
By construction and Lemma~\ref{l:Backlund}, we have \(\cB(Q)\subseteq \cB(Q_*)\subseteq \cB(Q)_*\subseteq \Schwartz\), where all three sets \(\cB(Q)\), \(\cB(Q_*)\), and \(\cB(Q)_*\) have order \(J-1\). As \(Q\subseteq B_M\), Lemma~\ref{l:COM} and the identity \eqref{MassReduction} yield \(\cB(Q)_*\subseteq B_M\). This completes the proof of~(i).

If \(Q\) is equicontinuous, Proposition~\ref{p:Zeros} ensures that
\begin{equation}\label{Zombie}
\sup\bigl\{|\zero|: q\in Q_*, \ z\in \Sigma, \text{ and } a(z;q)=0\bigr\}\lesssim_{M,Q}1.
\end{equation}
Thus \eqref{Surgery1} gives
\[
\limsup_{N\to\infty}\sup_{q\in \cB(Q)}\|q_{>N}\|_{L^2}\lesssim \limsup_{N\to\infty}\sup_{q\in Q}\|q_{>N}\|_{L^2} = 0,
\]
which proves (ii).

Finally, if \(\cB(Q)_*\) is equicontinuous then \(\cB(Q_*)\subseteq \cB(Q)_*\) is also equicontinuous and  \eqref{Surgery2} and \eqref{Zombie} then give us
\[
\limsup_{N\to\infty}\sup_{q\in Q_*}\|q_{>N}\|_{L^2}\lesssim \limsup_{N\to\infty}\sup_{q\in \cB(Q_*)}\|q_{>N}\|_{L^2} = 0,
\]
which completes the proof of (iii).
\end{proof}

\section{Induction on zeros}\label{S:7}

In this brief section we complete the proof of Theorem~\ref{t:Main}. We proceed by induction on the number of zeros in the sector $\Sigma$ defined in \eqref{Sigma}; this is formalized as the notion of order introduced in Definition~\ref{d:Order}.

\begin{prop}[Induction on zeros]\label{p:Induction}
If \(Q\subseteq B_M\cap \Schwartz\) is an equicontinuous set of order \(J\geq 0\) then \(Q_*\subseteq B_M\cap \Schwartz\), defined as in \eqref{Qs}, is equicontinuous.
\end{prop}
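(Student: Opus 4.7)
The plan is to proceed by induction on the order $J$ from Definition~\ref{d:Order}. The base case $J=0$ is exactly Proposition~\ref{p:Base}, so nothing new is required there. For the inductive step, I would assume the statement for order $J-1\geq 0$ and derive it for order $J\geq 1$.

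The key device is the set-valued B\"acklund transform $\cB$, together with the three-part Proposition~\ref{p:Und}. Since $Q$ has order $J\geq 1$, for every $q\in Q$ (and in fact for every $q\in Q_*$) the perturbation determinant has at least one zero in $\Sigma$, so $\cB(Q)$ and $\cB(Q_*)$ are nonempty. By Proposition~\ref{p:Und}(i), we have the chain
\begin{equation*}
\cB(Q)\subseteq \cB(Q_*)\subseteq \cB(Q)_* \subseteq B_M\cap\Schwartz,
\end{equation*}
and all three of these sets have order $J-1$. By Proposition~\ref{p:Und}(ii), equicontinuity of $Q$ transfers to $\cB(Q)$. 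This puts us in position to apply the inductive hypothesis to $\cB(Q)$: it is an equicontinuous set of order $J-1$, so $\cB(Q)_*$ is equicontinuous. Finally, invoking Proposition~\ref{p:Und}(iii) with the knowledge that both $Q$ and $\cB(Q)_*$ are equicontinuous yields equicontinuity of $Q_*$, closing the induction.

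Boundedness of $Q_*$ in $B_M$ is already guaranteed by Lemma~\ref{l:COM}, so the induction only needs to track equicontinuity. The main conceptual point to verify is the compatibility of the three inclusions in Proposition~\ref{p:Und}(i) with the ``${}_*$'' operation, but this is precisely the content of that proposition: one passes from $Q$ to $\cB(Q)$ (preserving equicontinuity by (ii)), then enlarges to $\cB(Q)_*$ (preserving equicontinuity by the inductive hypothesis), and finally pulls back to $Q_*$ (preserving equicontinuity by (iii)).

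The truly delicate analytical work has already been carried out in Sections~\ref{S:2}--\ref{S:6}; in particular, the base case rests on the asymptotic and factorization estimates for $a(i\kappa;q)$ combined with the trace formula and the lower bound \eqref{LB}, while the inductive step uses the surgery estimates \eqref{Surgery1}--\eqref{Surgery2} together with the zero-free region of Proposition~\ref{p:Zeros} to bound the location of the removed zero. Thus at the level of the induction itself the only thing to check is the bookkeeping above; there is no substantial obstacle remaining at this stage, since every step is a direct quotation of an earlier result.
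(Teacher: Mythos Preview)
Your proposal is correct and follows essentially the same approach as the paper: induction on $J$ with Proposition~\ref{p:Base} as the base case, then for the inductive step applying Proposition~\ref{p:Und}(ii) to pass equicontinuity from $Q$ to $\cB(Q)$, the inductive hypothesis to obtain equicontinuity of $\cB(Q)_*$, and Proposition~\ref{p:Und}(iii) to conclude for $Q_*$. The additional remarks you make (nonemptiness of $\cB(Q)$, boundedness via Lemma~\ref{l:COM}) are accurate but not strictly needed for the argument.
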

\begin{proof}
We proceed by induction on \(J\). Proposition~\ref{p:Base} provides the base case $J=0$. For the inductive step, we fix $J\geq 1$ and suppose that the statement is true for sets of order \(J-1\).

If $Q\subseteq B_M\cap \Schwartz$ is an equicontinuous set of order $J$, Proposition~\ref{p:Und}(ii) ensures that \(\cB(Q)\subseteq B_M\cap \Schwartz\) is an equicontinuous set of order \(J-1\). Applying the inductive hypothesis to \(\cB(Q)\) we conclude that \(\cB(Q)_*\) is equicontinuous. Proposition~\ref{p:Und}(iii) then yields that \(Q_*\) is equicontinuous, as required.
\end{proof}

We finish by showing that our main result, Theorem~\ref{t:Main}, follows readily from this proposition: 

\begin{proof}[Proof of Theorem~\ref{t:Main}]
For each \(q\in Q\), Proposition~\ref{p:FACTS} ensures that \(a(k;q)\) has at most a finite number of zeros in \(\Sigma\). Moreover, \eqref{Trace} ensures there are at most \(\lfloor\frac{M}{\pi}\rfloor\) such zeros. As a consequence, we can decompose
\[
Q = \bigcup_{J=0}^{\lfloor\frac{M}{\pi}\rfloor} Q\sbrack J,
\]
where \(Q\sbrack J\subseteq B_M\cap \Schwartz\) is a set of order \(J\). As $Q$ is equicontinuous, each $Q\sbrack J$ is equicontinuous, as well.

By Proposition~\ref{p:Induction}, each \(Q\sbrack J_*\subseteq B_M\cap \Schwartz\) is equicontinuous, and hence
\[
Q_* = \bigcup_{J=0}^{\lfloor\frac{M}{\pi}\rfloor} Q\sbrack J_*
\]
is equicontinuous.
\end{proof}

\bibliographystyle{habbrv}
\bibliography{refs}

\end{document}